\documentclass[a4paper,reqno]{amsart}
\usepackage{csquotes}
\usepackage[british]{babel}

\numberwithin{equation}{section}

\usepackage{amssymb}
\usepackage[protrusion=true,expansion=true]{microtype}	
\usepackage{amsmath,amsfonts,amsthm} 
\usepackage[pdftex]{graphicx}	
\usepackage{url}
\usepackage[title,titletoc,page]{appendix} 
\usepackage{listings} 
\usepackage{color}
\usepackage{comment}
\usepackage{caption}
\usepackage{subcaption}
\usepackage{algorithm}
\usepackage{graphicx}
\usepackage{subcaption}
\usepackage{orcidlink}

\usepackage{fancyhdr}
\numberwithin{equation}{section}		
\numberwithin{figure}{section}			
\numberwithin{table}{section}				

\newcommand{\R}{\mathbb{R}}

\usepackage[left=3.5cm,right=3.5cm,top=3.5cm,bottom=3.5cm,footskip=1.5cm,headsep=1cm]{geometry}

\usepackage[T1]{fontenc} 
\usepackage{lmodern} 
\rmfamily 
\DeclareFontShape{T1}{lmr}{b}{sc}{<->ssub*cmr/bx/sc}{}
\DeclareFontShape{T1}{lmr}{bx}{sc}{<->ssub*cmr/bx/sc}{}
\usepackage{lipsum}
\usepackage{mathtools}
\usepackage{amsmath}
\usepackage{amssymb}
\usepackage{tikz}
\usetikzlibrary{matrix,positioning,decorations.pathreplacing,calc}
\usetikzlibrary{
decorations.text,%
decorations.markings,%
shadows}
\usepackage{adjustbox}
\usepackage{graphicx}

\usepackage{dsfont} 
\usepackage[format=hang]{caption}
\usepackage{subcaption}
\usepackage[normalem]{ulem}

\usepackage{bm}

\usepackage[
style=numeric,
sorting=nty,
maxnames=99,
maxalphanames=5,
natbib=true,
sortcites]{biblatex}

\DeclareNameAlias{default}{family-given}

\graphicspath{{figures/}}

\AtEveryBibitem{
 \clearfield{url}
 \clearfield{issn}
 \clearfield{isbn}
 \clearfield{urldate}
 
 \ifentrytype{book}{
  \clearfield{pages}}{%
 }
}

\usepackage{xargs}                      
\usepackage[prependcaption,textsize=tiny,textwidth=3cm]{todonotes}
\newcommandx{\unsure}[2][1=]{\todo[linecolor=red,backgroundcolor=red!25,bordercolor=red,#1]{#2}}
\newcommandx{\change}[2][1=]{\todo[linecolor=blue,backgroundcolor=blue!25,bordercolor=blue,#1]{#2}}
\newcommandx{\info}[2][1=]{\todo[linecolor=OliveGreen,backgroundcolor=OliveGreen!25,bordercolor=OliveGreen,#1]{#2}}
\newcommandx{\improvement}[2][1=]{\todo[linecolor=black,backgroundcolor=black!25,bordercolor=black,#1]{#2}}
\newcommandx{\thiswillnotshow}[2][1=]{\todo[disable,#1]{#2}}

\usepackage{amsthm}
\usepackage[noabbrev,capitalize]{cleveref}
\crefname{proposition}{Proposition}{Propositions}
\crefname{equation}{}{}

\newtheorem{theorem}{Theorem}[section]
\newtheorem{lemma}[theorem]{Lemma}
\newtheorem{proposition}[theorem]{Proposition}

\theoremstyle{definition}
\newtheorem{definition}[theorem]{Definition}

\newtheorem{assumption}[theorem]{Assumption}
\newtheorem{remark}[theorem]{Remark}

\crefname{assumption}{Assumption}{Assumptions}
\crefname{definition}{Definition}{Definitions}
\crefname{corollary}{Corollary}{Corollaries}
\crefname{enumi}{item}{items}

\usepackage{fancyhdr}

\fancypagestyle{plain}{%
\fancyhead[C]{}
\fancyfoot[C]{}

}
\fancypagestyle{nosection}{%
\fancyhead[CE]{}
\fancyhead[CO]{}
\fancyfoot[CE,CO]{\thepage}
}

\DeclareMathOperator{\Z}{\mathbb{Z}}

\DeclareMathOperator{\C}{\mathbb{C}}

\renewcommand{\tilde}{\widetilde}
\renewcommand{\hat}{\widehat}
\renewcommand{\bar}[1]{\overline{#1}}

\renewcommand{\qedsymbol}{$\blacksquare$}

\usepackage{fancyhdr}

\fancypagestyle{plain}{%
\fancyhead[C]{}
\fancyfoot[C]{}

}
\fancypagestyle{nosection}{%
\fancyhead[CE]{}
\fancyhead[CO]{}
\fancyfoot[CE,CO]{\thepage}
}

\renewcommand{\epsilon}{\varepsilon}

\renewcommand{\tilde}{\widetilde}
\renewcommand{\hat}{\widehat}

\usepackage{tcolorbox}
\usetikzlibrary{tikzmark,calc}

\usepackage{enumerate}
\usepackage{upgreek}

\def\e{\eta}

\def\ue{u_\e}

\def\uek{u_{\e_k}}

\def\tol2{\buildrel\hbox{$L^2$}\over\longrightarrow}

\def\IOm{\int_{\Omega}}

\def\IOAY{\int\!\!\int_{\Omega\times Y^m}}

\def\limf{\displaystyle{\liminf_{\eta\to 0}}}

\def\xxe{\left(\vec{x},\frac{\textbf{P}\vec{x}}{\e}\right)}

\def\xy{(\vec{x},\vec{y})}

\def\wdto{\buildrel\hbox{$\textbf{P}$}\over
\rightharpoonup \!\!\!\!\! \rightharpoonup}

\def\dto{\buildrel\hbox{$\textbf{P}$}\over
\to \!\!\!\!\! \to}

\def\uo{u_0(\vec{x},\vec{y})}

\def\ve{v_\e}

\def\O{\Omega}

\begin{document}
\title[High-frequency spectral asymptotics and homogenization for quasiperiodic operators]{High-frequency spectral asymptotics and homogenization for quasiperiodic operators}

\author[E. Cherkaev]{Elena Cherkaev \,\orcidlink{0000-0002-5682-5453}}
\address{\parbox{\linewidth}{Elena Cherkaev\\
 University of Utah, Department of Mathematics, 
 155 South 1400 East, Salt Lake City, UT 84112, United States of America,  \href{http://orcid.org/0000-0002-5682-5453}{orcid.org/0000-0002-5682-5453}}.}
\email{elena@math.utah.edu}
\thanks{}

\author[B. Davies]{Bryn Davies \,\orcidlink{0000-0001-8108-2316}}
\address{\parbox{\linewidth}{Bryn Davies\\
 University of Warwick, Mathematics Institute, 
 Zeeman Building, Coventry CV4 7AL, United Kingdom, 
 \href{http://orcid.org/0000-0001-8108-2316}{orcid.org/0000-0001-8108-2316}}.}
\email{bryn.davies@warwick.ac.uk}
\thanks{}

\author[S. Guenneau]{S\'ebastien Guenneau \,\orcidlink{0000-0002-5924-622X}}
\address{\parbox{\linewidth}{S\'ebastien Guenneau\\
 Imperial College London, Department of Physics, UMI 2004 Abraham de Moivre CNRS-Imperial,
 Prince Consort Road, London SW7 2AZ, United Kingdom, 
 \href{http://orcid.org/0000-0002-5924-622X}{orcid.org/0000-0002-5924-622X}}.}
\email{s.guenneau@imperial.ac.uk}
\thanks{}

\author[C. Thalhammer]{Clemens Thalhammer \,\orcidlink{0009-0006-7218-260X}}
\address{\parbox{\linewidth}{Clemens Thalhammer\\
 ETH Z\"urich, Department of Mathematics, 
 R\"amistrasse 101, 8092 Z\"urich, Switzerland, 
 \href{https://orcid.org/0009-0006-7218-260X}{orcid.org/0009-0006-7218-260X}}.}
\email{clemens.thalhammer@sam.math.ethz.ch}
\thanks{}

\author[N. Wellander]{Niklas Wellander \,\orcidlink{0000-0002-4211-8977}}
\address{\parbox{\linewidth}{Niklas Wellander\\
 Lule\r{a} University of Technology, Department of Engineering Sciences and Mathematics, 
 SE-971 87 Lule\r{a}, Sweden and  Swedish Defence Research Agency, Department of Robust Radar Systems, Linköping, Sweden, \href{https://orcid.org/0000-0002-4211-8977}{orcid.org/0000-0002-4211-8977}}.}
\email{niklas.wellander@ltu.se}
\thanks{}

\begin{abstract}
We study the spectral asymptotics of elliptic operators with quasiperiodic coefficients by exploiting projections from higher-dimensional periodic functions. Using the framework of two-scale convergence adapted to cut-and-project quasiperiodic structures we establish that, in the low-frequency (homogenization) regime, the spectrum converges to that of a homogenized operator with effective coefficients determined by a cell problem on the higher-dimensional torus. In the high-frequency regime, we introduce a rescaling approach that transforms the problem to an expanding domain with asymptotically frozen coefficients. In the critical scaling, the rescaled spectrum converges to the union of the Bloch spectra arising from the quasiperiodic bulk and a boundary layer spectrum consisting of eigenfunctions concentrated near the boundary of the macroscopic domain. This boundary spectrum can be characterised as a subset of the spectrum of a family of half-space operators with frozen macroscopic coefficients. For any non-critical scaling, the rescaled spectrum fills the positive real line.
\end{abstract}
\maketitle

\noindent\textbf{Keywords:} Two-scale homogenization, cut-and-project crystals, spectral analysis, Bloch waves, pseudo-eigenfunctions

\section{Introduction}

Quasicrystals are one of the most fascinating phases of matter to have emerged in the last half a century. They were discovered through the observation of unexpected symmetries in diffraction patterns (in particular, ten-fold symmetries \cite{shechtman1984metallic}), and have inspired longstanding and challenging mathematical problems. For example, the spectra of operators with quasiperiodic coefficients are renowned for having exotic and unusual spectral properties such as fractal and Cantor spectra \cite{Simon1982, avila2009ten, nonnenmacher2005fractal}, metal-insulator transitions \cite{jitomirskaya1999metal, morison2022order}, unexpected symmetries in reciprocal space \cite{shechtman1984metallic}, and unique topological properties \cite{liu2022topological, putley2024mixing, agazzi2014colored}. These properties can be challenging to characterise. Notable challenges have included understanding the nature of the spectrum (whether it is pure point, singular continuous or absolutely continuous \cite{avila2009ten}), whether or not the Lebesgue measure of the spectrum is non-zero \cite{sutHo1989singular} and estimating its fractal dimension \cite{damanik2008hyperbolicity}.

Fortunately, in spite of the exotic spectral properties of quasicrystals, there are a variety of standard tools and approaches that can be applied, once modified appropriately. For example, Floquet-Bloch theory can be used to approximate the spectrum if we approximate the aperiodic geometry by a periodic pattern. This is typically achieved either by taking a large \textit{supercell} \cite{davies2025convergence, davies2024super, shubin1978almost, Damanik2016, Avron1990,guenneau2008acoustic} or by lifting the problem into a higher-dimensional periodic \textit{superspace} \cite{rodriguez2008computation, davies2025convergence}. The existence of a higher-dimensional periodic superspace has also facilitated the development of homogenisation methods, which exploit averages taken in the superspace \cite{Bouchitte+etal2010,wellander2018two}. In this work, we add a valuable new tool to this repertoire by showing that the spectrum has asymptotic scaling properties that can be fully characterised through the union of Bloch spectra of operators with frozen quasiperiodic coefficients and boundary layer spectra arising from the interaction with the domain boundary.

Quasicrystals' exotic wave-structure interactions have led to their usage in a variety of applications. Promising properties include the ability to achieve isotropy in lattice structures \cite{chen2020isotropic, wang2020quasiperiodic}, exhibit negative refraction \cite{morini2019negative}, yield zero Poisson's ratio \cite{clarke2023isotropic} and support spectra with non-trivial topologies \cite{kraus2012topological, liu2022topological}. These properties, as well as the ability to unlock a wider range of spectral patterns, have far-reaching implications for applications. Recent examples have included building ``topological'' wave guides \cite{kraus2012topological, Davies2022, beli2023interface} and ``rainbow'' harvesting devices \cite{davies2023graded}.

Our main result, Theorem~\ref{thm: SpectralLimit}, establishes that in the critical high-frequency scaling the rescaled spectrum $\lim_{\eta\to 0}\eta^{2}\sigma_\eta$ can be decomposed as the union of a \emph{Bloch spectrum} $\sigma_{\text{Bloch}}$, obtained by freezing the macroscopic variable and taking the union of spectra of the resulting quasiperiodic operators over $\bar{\Omega}$, and a \emph{boundary layer spectrum} $\sigma_{\text{Boundary}}$, arising from eigenfunctions that concentrate near the boundary $\partial\Omega$. For domains with $\mathcal{C}^{2}$ boundary, we further show that
\begin{equation*}
    \sigma_{\text{Boundary}}\subset\bigcup_{(\vec{x}_0,\vec{y}_0)\in B}\sigma(T_{\vec{x}_0,\vec{y}_0}),
\end{equation*}
where $T_{\vec{x}_0,\vec{y}_0}$ is the half-space operator with coefficients frozen at $\vec{x}_0$ and shifted by $\vec{y}_0$ in the second variable, showing that the boundary spectrum is governed by the local geometry and microstructure at each boundary point. We emphasise that the limiting Bloch spectrum retains the rich structure typical of quasiperiodic operators: in particular, the spectra of the frozen-coefficient operators $T_{\vec{x}_0}$ are known to exhibit spectral gaps and, in many cases, Cantor-like structure \cite{Simon1982, avila2009ten}. The limiting spectrum is therefore expected to be far from trivial and, in contrast to the sub-critical and super-critical regimes where the rescaled spectrum fills $\mathbb{R}_+$, the critical scaling can capture interesting non-trivial spectral features arising from the quasiperiodic microstructure.

The remainder of this paper is organised as follows: In the rest of this section, we recall the cut-and-project framework for quasiperiodic structures and the associated notion of two-scale convergence. In Section~2, we establish the low-frequency homogenisation limit, showing that the spectrum of the quasiperiodic operator converges to that of a homogenised operator. Section~3 is devoted to the high-frequency limit, where we introduce a rescaling approach and treat the critical, sub-critical, and super-critical scalings separately. Section~4 studies the boundary layer spectrum for domains with $\mathcal{C}^{2}$ boundary, showing that it is contained in the union of spectra of half-space operators $T_{\vec{x}_0,\vec{y}_0}$ with frozen macroscopic variable, taken over all boundary points $(\vec{x}_0,\vec{y}_0) \in B$. Finally, we conclude with a summary and outlook in Section~5.

\subsection{Notation}
We will use the following notation conventions throughout this article.

\begin{itemize}
    \item $\lVert \cdot \rVert$ denotes the $L^2(\mathbb{R}^n)$ norm wherever it is clear from context.
    \item $\langle \cdot,\cdot\rangle$ denotes the $L^2$ scalar product.
    \item $\langle \cdot,\cdot\rangle_{H^{-1},H^1}$ denotes the $H^{-1},H^1$ pairing with the convention of sesqui-linearity in the second variable.
    \item Any function space with the subscript $\#$ is a space of periodic functions.
    \item ${\mathcal L}(\Omega)$ is used to denote ${\mathcal L}(L^2(\Omega))$, the space of linear operators $L^2(\Omega)\to L^2(\Omega)$.
    \item $D(X,Y)$ denotes the space of functions from $X$ to $Y$ which are smooth functions whenever $X$ is bounded and Schwartz functions whenever $X$ is unbounded.
    \item $[A]_{\mathcal{C}^{0,\alpha}}$ denotes the $\alpha$-Hölder seminorm.
\end{itemize}
\subsection{Quasi-crystals via cut-and-project method}

Quasiperiodic patterns are those that can be related to higher-dimensional periodic structures. This is achieved by taking a cut along a hyperplane in the higher-dimensional space and projecting onto the lower-dimensional physical space, which is typically $\R^3$. This was proposed by the physicists Duneau and Katz forty years ago \cite{Duneau+Katz1985}. For example, the naturally occurring quasiperiodic mineral,  
$\rm{Al}_{63}\rm{Cu}_{24}\rm{Fe}_{13}$ (icosahedrite),  reported in  \cite{bindi2009}, can be described by a linear mapping such that, e.g. the conductivity is given by
  $\textbf{P}: \R^3 \to \R^6$,
\begin{equation*}
\begin{array}{ll}
A \bigl
( \textbf{P}\vec{x} \bigr )
= &
A \bigl ( n_\tau(x_1 + \tau x_2), n_\tau(\tau x_1 + x_3), n_\tau(x_2 +
\tau x_3), \cr & n_\tau(-x_1 + \tau x_2), n_\tau(\tau x_1 - x_3),
n_\tau(-x_2 + \tau x_3) \bigr ), \cr
\end{array}
\end{equation*}
where $n_{\tau}$ is the normalization constant
$1/\sqrt{2(2+\tau)}$ with the Golden
number  $\tau=(1+\sqrt{5})/2$
and $A \in L^{\infty}_{\sharp}(Y^6; \mathcal{C}_\sharp(Y_\perp^3))$, i.e. the conductivity is bounded almost everywhere on the hypercube $Y^6={]0,1[}^6$, it is continuous on the subset $Y_\perp^3$ orthogonal to the hyperplane of the cut
and is periodic. We note that there is an
ambiguity in the definition of the conductivity as we
could have defined it via a cut-and-projection
from a periodic array in $\R^{12}$ onto $\R^3$ $A(\textbf{P}' \vec{x})$ where $\textbf{P}': \R^3 \to \R^{12}$, i.e. $\textbf{P}'$ is a matrix with $12$ rows and $3$ columns and $A\in L^{\infty}_{\sharp}(Y^{12};\mathcal{C}_\sharp(Y_\perp^3))$ \cite{Duneau+Katz1985}. A simple example with $\textbf{P}:\R\to\R^2$ is shown in Figures~\ref{fig:cutandproject1} and~\ref{fig:cutandproject2}.

The existence of an underlying (higher-dimensional) periodic structure, suggests the possibility to adapt standard periodic homogenization tools to quasiperiodic materials. In particular, we will demonstrate the convergence of two-scale homogenization approach, based on taking a two-scale limit in the higher-dimensional periodic space, as depicted in Figures~\ref{fig:cutandproject1} and~\ref{fig:cutandproject2}.

We now state the standing assumptions on the coefficient matrix that will be imposed throughout this study.
\begin{assumption}\label{ass:A}
Let $\Omega\subset\R^n$ be a bounded domain with Lipschitz boundary and let $Y^m = {]0,1[}^m$. The coefficient matrix $A\colon \bar{\Omega}\times\R^m\to\R^{n\times n}$ satisfies the following:
\begin{enumerate}[(i)]
    \item \emph{Hermiticity.} $A(\vec{x},\vec{y}) = \overline{A}(\vec{x},\vec{y})^T$ for all $(\vec{x},\vec{y})\in\bar{\Omega}\times\R^m$.
    \item \emph{Periodicity.} For each $\vec{x}\in\bar{\Omega}$, the map $\vec{y}\mapsto A(\vec{x},\vec{y})$ is $Y^m$-periodic.
    \item \emph{Uniform ellipticity.} There exists $\gamma>0$ such that
    \begin{equation*}
        A_{ij}(\vec{x},\vec{y})\xi_i\xi_j \geq \gamma\lvert\vec{\xi}\rvert^2 \qquad \text{for all } \vec{\xi}\in\R^n,\ (\vec{x},\vec{y})\in\bar{\Omega}\times\R^m.
    \end{equation*}
    \item \emph{Regularity.} $A\in \mathcal{C}^{0,\alpha}(\bar{\Omega}; \mathcal{C}^{0,\alpha}_\sharp(Y^m;\R^{n\times n}))$. That is, $A$ is $\alpha-$Hölder continuous on $\overline{\Omega}\times Y^m$.
    \item Throughout the manuscript we make use of summation over repeated indices.
\end{enumerate}
\end{assumption}

\begin{remark}
    The regularity in Assumption~\ref{ass:A}(iv) is chosen to provide a unified framework for all results in this paper. For the low-frequency homogenisation results of Section~2, the weaker condition $A\in L^\infty(\Omega; L^\infty_\sharp(Y^m))$ together with (i)--(iii) suffices; the proofs rely only on uniform ellipticity, boundedness, and the two-scale convergence framework. The full $\mathcal{C}^{0,\alpha}$ regularity in both variables is needed in Section~3 for the high-frequency analysis, where bounds on $[A]_{\mathcal{C}^{0,\alpha}}$ enter the error estimates for the frozen-coefficient approximation. For the boundary layer analysis in Section~4, $\alpha-$Hölder continuity of $A$ in $\vec{x}$ (uniformly in $\vec{y}$) is the key requirement.
\end{remark}
\subsection{Cut-and-project two-scale convergence}
We consider a real-valued rectangular matrix $\textbf{P}$ with $m$ rows and $n$ columns ($n<m$), which fulfils the
criterion
\begin{equation}
\textbf{P}^T \vec{k} \not = {\bf 0} \; , \; \forall \vec{k} \in
\Z^m \setminus \{{\bf 0}\},   \label{criterion}
\end{equation}
so that it has some incommensurate entries. This is the criterion that is needed for the projected function to be non-periodic. Under this assumption, our goal will be to approximate the oscillating sequence of functions  $(u_\e(\vec{x}))_{\eta\in ]0,1[}$ (denoted $(u_\e)$ in the sequel), by a
sequence of two-scale locally quasiperiodic   $u_0\left({\vec{x}},\frac{\textbf{P}\vec{x}}{\e}\right)$ where $u_0\left({\vec{x}},\cdot\right)$ is $Y^m$-periodic on $\R^m$.

\begin{figure}
    \centering
   \includegraphics[width=0.45\linewidth]{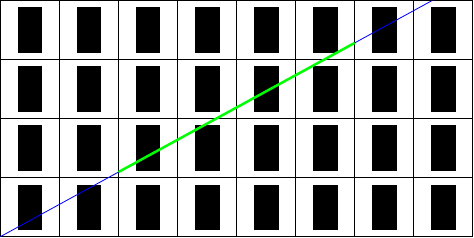}
   \hfill
    \includegraphics[width=0.45\linewidth]{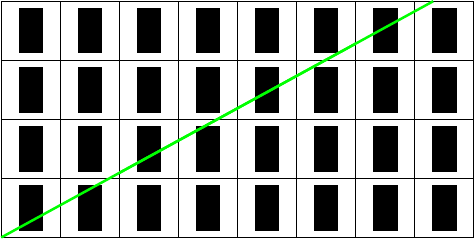}
    \caption{A cut-and-project quasiperiodic structure with the one-dimensional domain size (indicated by the thick green line) increased while the two-dimensional fundamental cell size is fixed.}
    \label{fig:cutandproject1}
\end{figure}

\begin{figure}
    \centering
   \includegraphics[width=0.45\linewidth]{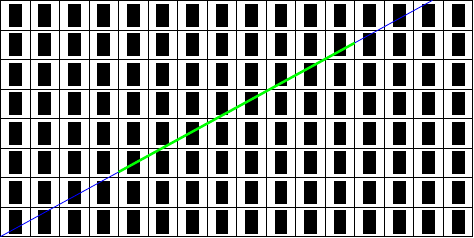}
   \hfill
    \includegraphics[width=0.45\linewidth]{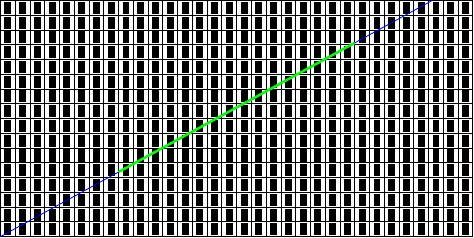}
    \caption{The same cut-and-project construction as Figure~\ref{fig:cutandproject1} but with the domain size fixed and the fundamental cell size decreased.}
    \label{fig:cutandproject2}
\end{figure}

We adopt the following definition of two-scale convergence
associated with a matrix $\textbf{P}$.
\begin{definition}[Weak \textbf{P}-two-scale convergence]
Let $\Omega$ be an open set in $\R^n$ \footnote{Note that throughout Section 1.2, $\Omega$ need not be bounded and can be the whole of $\R^n$.}and $Y^m={]0,1[}^m$.
We say that the sequence $(u_\e)$ two-scale converges weakly towards the function $u_0\in L^2(\O\times Y^m)$ for a matrix $\textbf{P}$  if for every $\varphi\in L^2(\O,\mathcal{C}_\sharp(Y^m))$
  \begin{equation}\label{eq:weak2scale}
\lim _{\e\to 0} \IOm\ue (\vec{x}) \varphi\left(\vec{x}, \frac{\textbf{P}\vec{x}}{\e}\right) \,\mathrm{d}\vec{x} = \IOAY \uo \textcolor{black}{\varphi}(\vec{x},\vec{y})\,\mathrm{d}\vec{x}\mathrm{d}\vec{y}.
  \end{equation}
\end{definition}
We denote weak two-scale convergence for a matrix $\textbf{P}$ with   $\ue \wdto u_0$. 
The following result \cite[Proposition~2.3]{Bouchitte+etal2010} ensures the existence of such two-scale limits when
the sequence $(u_\e)$ is bounded in $L^2(\O)$ and $\textbf{P}$
satisfies (\ref{criterion}).

\begin{proposition}\label{prop:weaktwoscaleprop}
Let $\O$ be an open set in $\R^n$ and $Y^m={]0,1[}^m$. If
$\textbf{P}: \R^n \to \R^m$ is a linear map   satisfying (\ref{criterion}) and $(\ue)$ is a
bounded sequence in $L^2(\O)$, then there exist a vanishing
subsequence $\e_k$ and a limit $\uo \in L^2(\O\times Y^m)$
($Y^m$-periodic in $\vec{y}$) such that $\uek \wdto u_0$ as $\e_k \to 0$.
\end{proposition}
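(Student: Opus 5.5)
The plan is the standard compactness argument of Nguetseng--Allaire, with the periodic oscillation $\vec{x}/\e$ replaced by $\textbf{P}\vec{x}/\e$. For fixed $\e$ consider the linear functional
\begin{equation*}
\mu_\e(\varphi) = \IOm \ue(\vec{x})\,\varphi\!\left(\vec{x},\frac{\textbf{P}\vec{x}}{\e}\right)\mathrm{d}\vec{x}, \qquad \varphi \in L^2(\O;\mathcal{C}_\sharp(Y^m)).
\end{equation*}
By Cauchy--Schwarz and $|\varphi(\vec{x},\textbf{P}\vec{x}/\e)|\le \lVert\varphi(\vec{x},\cdot)\rVert_{\mathcal{C}_\sharp(Y^m)}$, we get $|\mu_\e(\varphi)|\le C\lVert\varphi\rVert_{L^2(\O;\mathcal{C}_\sharp(Y^m))}$ with $C=\sup_\e\lVert\ue\rVert_{L^2(\O)}$. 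The space $L^2(\O;\mathcal{C}_\sharp(Y^m))$ is separable, so the bounded family $(\mu_\e)$ in its dual has a weak-$*$ convergent subsequence. Concretely, I would run a diagonal argument on a countable dense subset and extend by density using the uniform bound. This gives $\mu_{\e_k}\to\mu_0$ with $|\mu_0(\varphi)|\le C\lVert\varphi\rVert_{L^2(\O;\mathcal{C}_\sharp)}$.

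The key step, and the only place where (\ref{criterion}) enters, is the cut-and-project mean value lemma. It says that for $\psi\in L^1(\O;\mathcal{C}_\sharp(Y^m))$ (in particular $\psi=|\varphi|^2$),
\begin{equation*}
\lim_{\e\to0}\IOm \psi\!\left(\vec{x},\frac{\textbf{P}\vec{x}}{\e}\right)\mathrm{d}\vec{x}=\IOAY\psi(\vec{x},\vec{y})\,\mathrm{d}\vec{x}\mathrm{d}\vec{y}.
\end{equation*}
I would prove this first for $\psi(\vec{x},\vec{y})=\chi(\vec{x})e^{2\pi i\vec{k}\cdot\vec{y}}$ with $\chi\in\mathcal{C}_c^\infty(\O)$. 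The integral is then $\hat\chi$ evaluated at $-2\pi\textbf{P}^T\vec{k}/\e$. For $\vec{k}\neq\mathbf 0$ we have $\textbf{P}^T\vec{k}\neq\mathbf0$ by (\ref{criterion}), so this value tends to zero by Riemann--Lebesgue; for $\vec{k}=\mathbf0$ it equals $\int\chi$. The general case follows by density of trigonometric polynomials with $\mathcal{C}_c^\infty$ coefficients in $L^1(\O;\mathcal{C}_\sharp(Y^m))$, together with the uniform bound $\IOm|\psi(\vec{x},\textbf{P}\vec{x}/\e)|\le\lVert\psi\rVert_{L^1(\O;\mathcal{C}_\sharp)}$. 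The main subtlety is exactly that (\ref{criterion}) is needed only to kill the nonzero Fourier modes; no equidistribution beyond this is required.

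Applying the lemma to $|\varphi|^2$ improves the bound on the limit functional to
\begin{equation*}
|\mu_0(\varphi)|\le C\left(\IOAY|\varphi|^2\right)^{1/2} = C\lVert\varphi\rVert_{L^2(\O\times Y^m)}.
\end{equation*}
Since $L^2(\O;\mathcal{C}_\sharp(Y^m))$ is dense in $L^2(\O\times Y^m)$, $\mu_0$ extends to a bounded functional on $L^2(\O\times Y^m)$. By Riesz there is $u_0\in L^2(\O\times Y^m)$, extended $Y^m$-periodically in $\vec{y}$, with $\mu_0(\varphi)=\IOAY u_0\varphi$. This is exactly (\ref{eq:weak2scale}) along $\e_k$.

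If $\O$ is unbounded, I would still work in $L^2(\O;\mathcal{C}_\sharp)$, which remains separable. In the mean value lemma I would truncate $\O$ using the $L^1$ density argument, which handles this case without change.
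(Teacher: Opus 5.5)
Your argument is correct. The paper gives no proof of its own and defers to \cite[Proposition~2.3]{Bouchitte+etal2010}, which rests on this same adaptation of Allaire's compactness scheme: a uniform bound for the oscillating functionals on the separable space $L^2(\Omega;\mathcal{C}_\sharp(Y^m))$, weak-$*$ extraction, a mean-value lemma in which \eqref{criterion} removes the nonzero Fourier modes, and then Riesz representation (taking $u_0=\bar{w}$ to match the bilinear pairing in \eqref{eq:weak2scale}).
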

\noindent
We will need to pass to the limit in integrals
$\displaystyle{\int_{\Omega} u_\e \; v_\e \, \mathrm{d}\vec{x}}$ where
$u_\e\wdto u_0$ and $v_\e\wdto v_0$. For this, we introduce 
the notion of strong two-scale
(cut-and-projection) convergence for a matrix $\textbf{P}$.
\begin{definition}[Strong \textbf{P}-two-scale convergence]
A sequence $u_\e$ in $L^2(\O)$ is said to two-scale converge strongly, for a matrix $\textbf{P}$, towards a limit $u_0$ in
$L^2_\sharp (\O\times Y^m)$, which we denote $u_\e\dto u_0$, if and
only if $\ue \wdto u_0$   and
\begin{equation}
\displaystyle{ {\Vert u_\e(\vec{x})\Vert}_{L^2(\O)} \to{\Vert u_0
(\vec{x},\vec{y}) \Vert}_{L^2(\O\times Y^m)}},
\end{equation}
as $\e \to 0$.   
\end{definition}

This definition expresses that the effective oscillations of the
sequence $(u_\e)$ have a quasi-periodicity that is on the order of $\e$. Moreover,
these oscillations are fully identified by $u_0$.
The following proposition \cite[Proposition~2.10]{Bouchitte+etal2010} provides us with
a corrector type result for the sequence
$u_\e$ when its limit $u_0$ is smooth enough.
\begin{proposition}
Let $\textbf{P}$ be a linear map from $\R^n$ to ${\R^m}$ satisfying
(\ref{criterion}). Let $\ue$ be a sequence bounded in $L^2(\O)$ such
that $\ue\wdto u_0$ (weakly).   Then

i) $\ue$ weakly converges in $L^2(\O)$ towards $\displaystyle{u(\vec{x}) = \int_{Y^m} u_0(\vec{x},\vec{y}) \; \mathrm{d}\vec{y}}$ and
\begin{equation}
\limf {\Vert\ue\Vert}_{L^2(\O)}\geq {\Vert u_0 \Vert}_{L^2(\O \times
Y^m)} \geq {\Vert u\Vert}_{L^2(\O)}.
\end{equation}

ii) Let $\ve$ be another bounded sequence in $L^2(\O)$ such that
$v_\e\dto v_0$ (strongly), then
\begin{equation}
\ue\ve\to  w(\vec{x}) \; \hbox{ in ${\mathcal D}'(\O)$ where
$\displaystyle{w(\vec{x}) = \int_{Y^m}\uo v_0(\vec{x},\vec{y})
\,\mathrm{d}\vec{y}}$}.
\end{equation}

iii) Let us further assume that $u_0\in L^2 (\O, \mathcal{C}_\sharp (Y^m))$; if ${\displaystyle\ue\dto  u_0}$ strongly and
$$\displaystyle {{\left\Vert u_0\xxe \right\Vert}_{L^2(\O)} \to{\Vert
u_0\Vert}_{L^2(\O\times Y^m)}}$$
then
\begin{equation}
\displaystyle{{\left\| \ue - u_0\xxe \right\|}_{L^2(\O)}\to  0}.
\end{equation}
\end{proposition}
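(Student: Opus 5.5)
We prove the three items in order, using only the definition of weak $\textbf{P}$-two-scale convergence, the existence result Proposition~\ref{prop:weaktwoscaleprop}, and the key ergodic fact behind it. That fact is the following: under (\ref{criterion}), for every $\psi\in L^1(\O;\mathcal{C}_\sharp(Y^m))$ (in particular $\psi\in L^2(\O;\mathcal{C}_\sharp(Y^m))$ squared),
\begin{equation*}
\psi\left(\vec{x},\tfrac{\textbf{P}\vec{x}}{\e}\right)\rightharpoonup \int_{Y^m}\psi(\vec{x},\vec{y})\,\mathrm{d}\vec{y}
\end{equation*}
weakly in $L^1(\O)$. This is proved for trigonometric polynomials $e^{2\pi i \vec{k}\cdot\vec{y}}$ via Riemann--Lebesgue, since $e^{2\pi i \textbf{P}^T\vec{k}\cdot\vec{x}/\e}\rightharpoonup 0$ when $\textbf{P}^T\vec{k}\neq 0$. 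It then extends to general $\psi$ by density and uniform bounds.

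\emph{Item i).} Testing (\ref{eq:weak2scale}) with $\varphi(\vec{x},\vec{y})=\phi(\vec{x})$ independent of $\vec{y}$ gives $\ue\rightharpoonup u$ in $L^2(\O)$ along the full sequence, since boundedness lets us pass from test functions to all of $L^2$. For the first inequality, take $\varphi\in L^2(\O;\mathcal{C}_\sharp(Y^m))$ and expand
\begin{equation*}
0\le \IOm \left|\ue-\varphi\left(\vec{x},\tfrac{\textbf{P}\vec{x}}{\e}\right)\right|^2\mathrm{d}\vec{x}.
\end{equation*}
Passing to the liminf, the cross term converges by the definition and the $|\varphi|^2$ term converges by the ergodic fact. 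This yields
\begin{equation*}
\liminf\|\ue\|^2\ge 2\,\mathrm{Re}\langle u_0,\varphi\rangle-\|\varphi\|^2_{L^2(\O\times Y^m)}.
\end{equation*}
Letting $\varphi\to u_0$ in $L^2(\O\times Y^m)$ by density gives $\liminf\|\ue\|\ge\|u_0\|$. The second inequality is Jensen/Cauchy--Schwarz in $\vec{y}$, since $|Y^m|=1$.

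\emph{Item iii).} The same expansion with $\varphi=u_0$ now gives
\begin{equation*}
\|\ue-u_0(\cdot,\textbf{P}\cdot/\e)\|^2=\|\ue\|^2-2\,\mathrm{Re}\IOm \ue\,\bar u_0\left(\vec{x},\tfrac{\textbf{P}\vec{x}}{\e}\right)\mathrm{d}\vec{x}+\|u_0(\cdot,\textbf{P}\cdot/\e)\|^2.
\end{equation*}
The first term tends to $\|u_0\|^2$ by strong convergence, the second to $-2\|u_0\|^2$ by the definition (admissible because $u_0\in L^2(\O;\mathcal{C}_\sharp)$), and the third to $\|u_0\|^2$ by hypothesis. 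The sum tends to $0$.

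\emph{Item ii).} This is the main obstacle, because $v_0$ is merely $L^2$, so $v_0(\vec{x},\textbf{P}\vec{x}/\e)$ need not make sense. Fix $\phi\in\mathcal{D}(\O)$ and $\delta>0$. Choose $\psi\in \mathcal{C}_c(\O;\mathcal{C}_\sharp(Y^m))$ with $\|v_0-\psi\|_{L^2(\O\times Y^m)}<\delta$. Then split
\begin{equation*}
\IOm\ue\ve\phi=\IOm\ue\,\psi\!\left(\vec{x},\tfrac{\textbf{P}\vec{x}}{\e}\right)\phi+\IOm\ue\left(\ve-\psi\!\left(\vec{x},\tfrac{\textbf{P}\vec{x}}{\e}\right)\right)\phi.
\end{equation*}
The first term converges to $\IOAY u_0\psi\phi$ by the definition. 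For the second, Cauchy--Schwarz and the boundedness of $\ue$ reduce matters to $\limsup\|\ve-\psi(\cdot,\textbf{P}\cdot/\e)\|_{L^2}$. Expanding this as in iii), using $\|\ve\|\to\|v_0\|$, the weak two-scale convergence of $\ve$ against $\psi$, and the ergodic fact for $|\psi|^2$, gives the bound $\|v_0-\psi\|_{L^2(\O\times Y^m)}<\delta$. Finally, sending $\delta\to0$ and replacing $\psi$ by $v_0$ in the limit yields $\IOm \ue\ve\phi\to\int_\O w\phi$. The care needed here is to ensure the approximation is uniform in $\e$, which is exactly what the norm convergence in the strong two-scale hypothesis provides.
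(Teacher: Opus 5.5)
Your proof is correct. Expanding $\lVert u_\eta-\varphi(\cdot,\textbf{P}\cdot/\eta)\rVert^2$ and using the Riemann--Lebesgue mean-value lemma for $\psi(\vec{x},\textbf{P}\vec{x}/\eta)$ under (\ref{criterion}) handles all three items, and the density step in ii) is closed correctly by the norm convergence $\lVert v_\eta\rVert\to\lVert v_0\rVert$. The paper gives no proof of its own; it cites \cite[Proposition~2.10]{Bouchitte+etal2010}, and what you give is the standard adaptation of Allaire's periodic argument to the cut-and-project setting, which is presumably the route taken there.
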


Classes of functions such that $\displaystyle {{\left\| u_0 \xxe
\right\|}_{L^2(\O)} \to{\left\Vert u_0\right\Vert}_{L^2(\O\times Y^m)}}$ are
said to be admissible for the two-scale (cut-and-projection)
convergence. Examples of admissible functions include $L^2(\O,\mathcal{C}_\sharp(Y^m))$, which is a dense subset of $L^2(\O\times Y^m)$ or $C_c(\Omega)\times L^2_\sharp(Y^m)$ (for a discussion of criteria of admissibility in the periodic case, see \cite{Allaire1992}).

When performing homogenisation on a PDE system, the objective is, given that $u_\e\wdto u_0$ and $\nabla u_\e\wdto \chi$, to identify the differential relationship between
$\chi$ and $u_0$, given a bounded sequence $(u_\e)$ in $H^{1}(\O)$.
A classical solution to this problem was presented by Allaire in the case of periodic functions \cite{Allaire1992} and Bouchitt\'e et al. for quasi-periodic functions
\cite{Bouchitte+etal2010}. In the latter case, the oscillations of the sequence
$(\nabla u_\e)$ cannot be represented in general as the usual gradient  of a periodic function.

It has been established in \cite[Proposition~2.13]{Bouchitte+etal2010} that the two-scale
cut-and-projection limit of the gradients of bounded sequences in $H^{1}(\O)$ exists and can be characterised:
\begin{proposition}\label{prop:gradtwoscale}
Let $\textbf{P}$ be a matrix satisfying (\ref{criterion}) and $(\ue)$ a
bounded sequence in $H^{1}(\O)$. Then, there exist $u_0 \in H^{1}(\O)$ and $ \vec{w} \in L^2(\O,{\mathcal L}_\textbf{P})$, and a subsequence (still denoted by $(\ue)$) such that 
\begin{equation*}
\ue\dto u_0(\vec{x}) \;  \mbox{ and } \; \nabla\ue\wdto\nabla u_0(\vec{x}) +  \vec{w} \xy,
\end{equation*}
where
\begin{equation*}
{\mathcal L}_\textbf{P} := \Bigl \{
 \vec{v} \in L^2_{\sharp}(Y^m; \R^n) \,\mid\,\  \vec{v}(\vec{y}) = \sum_{\vec{k}\in \Z^m \setminus \{\vec{0}\}} \lambda_{\vec{k}} \; {\bf
P}^T\vec{k} e^{2\pi i \vec{k} \cdot \vec{y}} \; , \; \lambda_{\vec{k}} \in \C \Bigr \}. 
\end{equation*}
\end{proposition}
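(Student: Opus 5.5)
The plan is to follow Allaire's periodic argument, with the torus $Y^m$ replacing $Y^n$ and $\textbf{P}^T\nabla_{\vec{y}}$ replacing $\nabla_{\vec{y}}$.

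\emph{Compactness and the first limit.} Since $(\ue)$ is bounded in $H^1(\O)$, Proposition~\ref{prop:weaktwoscaleprop} lets us extract a subsequence with $\ue\wdto u_0(\vec{x},\vec{y})$ and $\nabla\ue\wdto\chi(\vec{x},\vec{y})$, where $\chi\in L^2(\O\times Y^m;\R^n)$. We first show that $u_0$ does not depend on $\vec{y}$. Take $\Psi\in\mathcal{D}(\O;\mathcal{C}^\infty_\sharp(Y^m;\R^n))$. The chain rule gives
\begin{equation*}
\nabla\cdot\bigl[\Psi(\vec{x},\textbf{P}\vec{x}/\e)\bigr]=(\nabla_{\vec{x}}\cdot\Psi)+\e^{-1}\,\textbf{P}^T\nabla_{\vec{y}}\cdot\Psi .
\end{equation*}
Multiply $\eta\,\nabla\ue$ by $\Psi(\vec{x},\textbf{P}\vec{x}/\e)$ and integrate by parts. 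Since $\eta\,\nabla\ue\to0$ weakly, passing to the limit yields $\int\!\!\int u_0\,\textbf{P}^T\nabla_{\vec{y}}\cdot\Psi=0$ for all such $\Psi$. In Fourier modes in $\vec{y}$, this says $\hat u_{0,\vec{k}}(\vec{x})\,\textbf{P}^T\vec{k}=0$ for every $\vec{k}$. Criterion (\ref{criterion}) then forces $\hat u_{0,\vec{k}}=0$ for all $\vec{k}\neq0$, so $u_0=u_0(\vec{x})$.

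\emph{Identifying $u_0$ and strong convergence.} By Rellich on compact subdomains, $\ue\to u$ strongly in $L^2_{loc}$, and by part (i) of the corrector proposition $u$ is the weak limit; hence $u_0=u\in H^1$. For the strong two-scale convergence, I would test against $\varphi(\vec{x})\psi(\textbf{P}\vec{x}/\e)$ and use that $\psi(\textbf{P}\vec{x}/\e)\rightharpoonup\int_{Y^m}\psi$ weakly in $L^2$ (equidistribution from (\ref{criterion})). Together with the strong convergence of $\ue$, this gives the weak two-scale limit. Convergence of the norms follows from the strong $L^2$ convergence. On unbounded or irregular $\O$ this step should be read locally, or one assumes a Lipschitz bounded $\O$.

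\emph{Structure of the gradient limit.} Take divergence-free test fields in the $\vec{y}$ sense: $\Psi$ with $\textbf{P}^T\nabla_{\vec{y}}\cdot\Psi=0$. Integrating $\nabla\ue\cdot\Psi(\vec{x},\textbf{P}\vec{x}/\e)$ by parts, the singular term vanishes, and in the limit
\begin{equation*}
\int\!\!\int\chi\cdot\Psi=-\int\!\!\int u\,\nabla_{\vec{x}}\cdot\Psi=\int\!\!\int\nabla u\cdot\Psi .
\end{equation*}
So $\vec{w}:=\chi-\nabla u$ is $L^2$-orthogonal to all such $\Psi$.

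\emph{Main obstacle: the orthogonality step.} We must show that the orthogonal complement of $\{\Psi:\textbf{P}^T\nabla_{\vec{y}}\cdot\Psi=0\}$ is exactly $L^2(\O;\mathcal{L}_\textbf{P})$. Working mode by mode, the $\vec{k}$-th Fourier coefficient $\hat\Psi_{\vec{k}}$ must be orthogonal to $\textbf{P}^T\vec{k}$. For $\vec{k}\neq0$ the orthogonal complement of that set is the line $\C\,\textbf{P}^T\vec{k}$. For $\vec{k}=0$ there is no constraint, so the complement is $\{0\}$ and $\hat w_0=0$. Hence $\vec{w}(\vec{x},\cdot)$ has the form $\sum_{\vec{k}\neq0}\lambda_{\vec{k}}(\vec{x})\,\textbf{P}^T\vec{k}\,e^{2\pi i\vec{k}\cdot\vec{y}}$. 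The care needed here is that smooth trigonometric-polynomial test fields must be dense in the constrained $L^2$ space. I would handle this by truncating in Fourier and projecting each mode orthogonally to $\textbf{P}^T\vec{k}$, which preserves smoothness. Note also that $\mathcal{L}_\textbf{P}$ should be understood as the $L^2$-closure, since the $\lambda_{\vec{k}}$ need not decay nicely.
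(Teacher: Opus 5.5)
Your argument is correct and is essentially the standard Allaire-type proof that the paper invokes by citing \cite[Proposition~2.13]{Bouchitte+etal2010} rather than reproving it. In that proof one removes the $\vec{y}$-dependence of the limit using $\eta\nabla u_\eta$ together with criterion~\eqref{criterion}, then tests $\nabla u_\eta$ against $\mathrm{div}_{\textbf{P}}$-free fields and reads off the orthogonal complement mode by mode. Your caveat that the strong two-scale convergence needs a Rellich-type (bounded Lipschitz) domain is genuinely required. The density worry in your last step is unnecessary, however: single-mode fields $\phi(\vec{x})\,\vec{a}\,e^{2\pi i\vec{k}\cdot\vec{y}}$ with $\vec{a}\perp\textbf{P}^T\vec{k}$ already force $\hat{w}_{\vec{k}}\in\C\,\textbf{P}^T\vec{k}$ for $\vec{k}\neq\vec{0}$ and $\hat{w}_{\vec{0}}=0$.
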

Note that the sequence $(\lambda_{\vec{k}})$ which appears in the definition of
${\mathcal L}_{\textbf{P}}$ does not necessarily belong to $l^2$. It only satisfies
$\sum_{\vec{k}\in\Z^m\setminus\{{\bf 0}\}} {\mid\lambda_{\vec{k}}\mid}^2 \; {\mid \textbf{P}^T\vec{k}\mid}^2 < +\infty
$. 

There is an alternative way to express the above result \cite{wellander2018two}. Let us define $\textbf{P}$-dependent gradient and divergence operators acting on functions defined on domains in $\R^m$. They are
\begin{definition}[\textbf{P}-gradient and \textbf{P}-divergence]\label{def:R-differetnialoperators}
\begin{equation*}
 \nabla_\textbf{P}\;u(\vec{y}) =
 \mathrm{grad}_\textbf{P}\;u(\vec{y}) :=
 \left({\mathbf P}^T \nabla_{\vec{y}}\right)  u(\vec{y}),
\end{equation*}
\begin{equation*}
\nabla_\textbf{P}\cdot \vec{u}(\vec{y}) =
\mathrm{div}_\textbf{P}\;\vec{u}(\vec{y}) :=
\left(\textbf{P}^T \nabla_{\vec{y}}\right) \cdot \vec{u}(\vec{y}).
\end{equation*}
\end{definition}

We introduce well-suited function spaces

\begin{definition}[Function spaces for \textbf{P}-gradient and \textbf{P}-divergence]
\begin{equation}
\displaystyle
{\mathcal H}_{\sharp}(\mathrm{grad}_\textbf{P},Y^m) := \displaystyle{\Bigl \{u\in
{L^2_\sharp(Y^m)} \; \mid \; \mathrm{grad}_\textbf{P}\;u \in
{L^2_\sharp(Y^m; \R^n)}
\Bigr \},
}
\label{def:grad-Yspace}
\end{equation}
\begin{equation}
\displaystyle
{\mathcal H}_{\sharp}(\mathrm{div}_\textbf{P}, Y^m) :=  \displaystyle{\Bigl \{ \vec{u}\in
{L^2_\sharp(Y^m; \R^n)} \; \mid \; \mathrm{div}_\textbf{P}\;\vec{u}  \in
{L^2_\sharp(Y^m)}
\Bigr \},
}
\label{def:divR-Yspace}
\end{equation}
\begin{equation}
\displaystyle
{\mathcal H}_{\sharp}(\mathrm{div}_{\textbf{P}_0}, Y^m) :=  \displaystyle{\Bigl \{ \vec{u}\in
{\mathcal H}_{\sharp}(\mathrm{div}_\textbf{P}, Y^m) \; \mid \; \mathrm{div}_\textbf{P}\;\vec{u} \equiv 0 \Bigr \}.
}
\label{def:div_0-Yspace}
\end{equation}
Further, we define the usual Sobolev spaces 
\begin{equation}
\displaystyle
{H^{1}}(\Omega) :=  \displaystyle{\Bigl \{ u\in {L^2(\Omega)} \; \mid \; \mathrm{grad}\;u(\vec{x}) \in
{L^2(\Omega; \R^n)}
\Bigr \}},
\label{def:grad-space}
\end{equation}
\begin{equation}
\displaystyle
{H^{1}_0}(\Omega) :=  \displaystyle{ \Bigl\{ u\in H^1(\Omega)\; \mid \;  u|_{\partial \Omega} = 0
\Bigr \},
}
\label{def:grad_0-space}
\end{equation}
\end{definition}

In the following main compactness results we let $\textbf{P}$ be a matrix with $m$ rows and $n$ columns ($m>n$)
satisfying (\ref{criterion}) and assume that $\O$ is an open bounded set
of $\R^n$ with a Lipschitz boundary.
We begin by recasting  Proposition \ref{prop:gradtwoscale} into the familiar form from \cite{wellander2018two}

\begin{proposition}\label{prop:grad-split}
Let  $(u_\e)$ be a uniformly bounded
sequence in $H^1(\Omega)$.
Then, there exists a subsequence $(u_{\e_k})$
as well as functions $u\in H^1(\Omega)$ and $\mathrm{grad}_\textbf{P} \;  {u}_1\xy \in L^2(\Omega, L^2_\sharp(Y^m; \R^n ))$,
such that
\begin{equation}\label{eq:gradsplit_2s_strong}
u_{\e_k} \dto u(\vec{x}),
\end{equation}
\begin{equation}
\mathrm{grad}\; u_{\e_k} \wdto \mathrm{grad} \; u (\vec{x}) + \mathrm{grad}_\textbf{P} \;  u_1\xy ,  \qquad
{\e_k} \to 0,
\end{equation}
\end{proposition}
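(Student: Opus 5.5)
The plan is to deduce the statement from Proposition~\ref{prop:gradtwoscale} by identifying the oscillating part $\vec{w}\in L^2(\Omega,\mathcal{L}_\textbf{P})$ as a $\textbf{P}$-gradient in the sense of Definition~\ref{def:R-differetnialoperators}. We apply Proposition~\ref{prop:gradtwoscale} to the bounded sequence $(u_\e)$ in $H^1(\Omega)$. This gives a subsequence $(u_{\e_k})$, a function $u\in H^1(\Omega)$ with $u_{\e_k}\dto u(\vec{x})$, which is exactly \eqref{eq:gradsplit_2s_strong}, and a field $\vec{w}\in L^2(\Omega,\mathcal{L}_\textbf{P})$ such that $\nabla u_{\e_k}\wdto \nabla u+\vec{w}$.

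It remains to write $\vec{w}=\mathrm{grad}_\textbf{P}\,u_1$. By the definition of $\mathcal{L}_\textbf{P}$, for a.e. $\vec{x}$ we have
\begin{equation*}
\vec{w}(\vec{x},\vec{y})=\sum_{\vec{k}\neq \vec{0}}\lambda_{\vec{k}}(\vec{x})\,\textbf{P}^T\vec{k}\,e^{2\pi i\vec{k}\cdot\vec{y}} .
\end{equation*}
We define, formally,
\begin{equation*}
u_1(\vec{x},\vec{y}):=\sum_{\vec{k}\neq\vec{0}}\frac{\lambda_{\vec{k}}(\vec{x})}{2\pi i}\,e^{2\pi i\vec{k}\cdot\vec{y}} .
\end{equation*}
Differentiating term by term gives
\begin{equation*}
\mathrm{grad}_\textbf{P}\,u_1=\textbf{P}^T\nabla_{\vec{y}}u_1=\sum_{\vec{k}\neq\vec{0}}\lambda_{\vec{k}}\,\textbf{P}^T\vec{k}\,e^{2\pi i\vec{k}\cdot\vec{y}}=\vec{w}.
\end{equation*}
The identity $\mathrm{grad}_\textbf{P}u_1=\vec{w}$ in the sense of Fourier series or distributions on the torus then follows coefficientwise. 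The bound
\begin{equation*}
\sum_{\vec{k}}|\lambda_{\vec{k}}(\vec{x})|^2|\textbf{P}^T\vec{k}|^2=\|\vec{w}(\vec{x},\cdot)\|^2_{L^2(Y^m)}
\end{equation*}
holds by Parseval. Integrating in $\vec{x}$ shows that $\mathrm{grad}_\textbf{P}u_1\in L^2(\Omega,L^2_\sharp(Y^m;\R^n))$, as claimed.

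The main obstacle is that, as remarked after Proposition~\ref{prop:gradtwoscale}, $(\lambda_{\vec{k}})$ need not lie in $\ell^2$. Since $|\textbf{P}^T\vec{k}|$ can be arbitrarily small by \eqref{criterion} (small divisors), $u_1$ need not be an $L^2$ function in $\vec{y}$. The statement only asserts that $\mathrm{grad}_\textbf{P}u_1$ is square integrable, so I would interpret $u_1$ as an element of the completion of smooth periodic functions with zero mean under the seminorm $\|\mathrm{grad}_\textbf{P}\cdot\|_{L^2(\Omega\times Y^m)}$. This is a genuine norm on mean-zero functions because \eqref{criterion} gives $\textbf{P}^T\vec{k}\neq 0$ for all $\vec{k}\neq\vec{0}$. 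Within this space $u_1$ is just the element whose image under $\mathrm{grad}_\textbf{P}$ is $\vec{w}$. I would check this by showing that $\mathrm{grad}_\textbf{P}$ is an isometry from that completion onto $L^2(\Omega,\mathcal{L}_\textbf{P})$.

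Measurability in $\vec{x}$ of the coefficients $\lambda_{\vec{k}}(\vec{x})$ follows from the fact that they are Fourier coefficients of $\vec{w}$, so no further difficulty arises there.
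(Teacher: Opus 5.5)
Your proposal is correct and follows essentially the paper's route: the paper presents this result as a recasting of Proposition~\ref{prop:gradtwoscale} (citing the proof of \cite[Proposition~2.13]{Bouchitte+etal2010}), and your identification of $\vec{w}\in L^2(\Omega,\mathcal{L}_\textbf{P})$ with $\mathrm{grad}_\textbf{P}u_1$ via Fourier coefficients is exactly the relabelling the paper leaves implicit. Treating $u_1$ as an element of the completion of mean-zero smooth periodic functions under $\|\mathrm{grad}_\textbf{P}\cdot\|_{L^2(\Omega\times Y^m)}$ is the right way to make this rigorous, and it should replace your phrase ``distributions on the torus'': for Liouville-type $\textbf{P}$ the coefficients $\lambda_{\vec{k}}$ can grow faster than any polynomial, so $u_1(\vec{x},\cdot)$ need not be a distribution.
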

\begin{proof}
The proof \cite{wellander2018two} is identical to the one for Proposition~\ref{prop:gradtwoscale} given in \cite[Proposition~2.13]{Bouchitte+etal2010}.
\end{proof}

We have the following analogue of Proposition~1.14(ii) in \cite{Allaire1992}.
\begin{proposition}\label{prop:eta_grad}
Let  $(u_\e)$ and $(\e\mathrm{grad}\;{u}_\e)$ be two uniformly bounded
sequences in $L^2(\Omega)$ and $L^2(\Omega;\R^n)$, respectively.
Then there exist a subsequence
 $({u}_{\e_k})$
and a function {${u}_0\in L^2(\O,{\mathcal H}_{\sharp}(\mathrm{grad}_\textbf{P} ,Y^m))$}
such that
\begin{equation}\label{eq:u_2s_weak}
u_{\e_k} \wdto u_0(\vec{x},\vec{y})\; \qquad \in L^2(\O,L^2_{\sharp}(Y^m)),
\end{equation}
\begin{equation}\label{eq:etagradu_2s}
{\e_k} \, \mathrm{grad}\; u_{\e_k} \wdto  \mathrm{grad}_\textbf{P}  \; {u}_0\xy  \; \in L^2(\O,L^2_{\sharp}(Y^m;\R^n)),
\end{equation}
as ${\e_k} \to 0$.
\end{proposition}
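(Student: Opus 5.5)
We follow the proof of Allaire's Proposition~1.14(ii), with the periodic gradient replaced by the \textbf{P}-gradient. Since $(u_\e)$ is bounded in $L^2(\O)$, Proposition~\ref{prop:weaktwoscaleprop} gives a subsequence with $u_{\e_k}\wdto u_0\in L^2(\O\times Y^m)$. Since $(\e\,\mathrm{grad}\,u_\e)$ is bounded in $L^2(\O;\R^n)$, applying the same proposition componentwise and passing to a further subsequence gives $\e_k\,\mathrm{grad}\,u_{\e_k}\wdto \vec{\xi}_0\in L^2(\O\times Y^m;\R^n)$. Two things remain: to identify $\vec{\xi}_0=\mathrm{grad}_\textbf{P} u_0$ in the distributional sense in $\vec{y}$, and then to conclude that $u_0\in L^2(\O,\mathcal{H}_\sharp(\mathrm{grad}_\textbf{P},Y^m))$.

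For the identification, fix a test field $\vec{\psi}\in \mathcal{D}(\O;\mathcal{C}^\infty_\sharp(Y^m;\R^n))$ and differentiate the oscillating composition with the chain rule:
\begin{equation*}
\mathrm{div}_{\vec{x}}\Big[\vec{\psi}\Big(\vec{x},\tfrac{\textbf{P}\vec{x}}{\e}\Big)\Big] = (\mathrm{div}_{\vec{x}}\vec{\psi})\Big(\vec{x},\tfrac{\textbf{P}\vec{x}}{\e}\Big) + \frac{1}{\e}(\mathrm{div}_\textbf{P}\vec{\psi})\Big(\vec{x},\tfrac{\textbf{P}\vec{x}}{\e}\Big).
\end{equation*}
This holds because $\partial_{x_i}$ applied to $\vec{y}=\textbf{P}\vec{x}/\e$ produces $\e^{-1}\textbf{P}_{ji}\partial_{y_j}$, and that is exactly the $i$-th component of $\textbf{P}^T\nabla_{\vec{y}}$. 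Since $\vec{\psi}$ has compact support in $\vec{x}$, integration by parts gives
\begin{equation*}
\int_\O \e\,\mathrm{grad}\,u_\e\cdot\vec{\psi}\Big(\vec{x},\tfrac{\textbf{P}\vec{x}}{\e}\Big)\,\mathrm{d}\vec{x} = -\int_\O u_\e\Big[\e\,(\mathrm{div}_{\vec{x}}\vec{\psi}) + \mathrm{div}_\textbf{P}\vec{\psi}\Big]\Big(\vec{x},\tfrac{\textbf{P}\vec{x}}{\e}\Big)\,\mathrm{d}\vec{x}.
\end{equation*}
We pass to the limit along $\e_k$. The term carrying the factor $\e$ vanishes, since $u_\e$ is bounded and $\mathrm{div}_{\vec{x}}\vec{\psi}$ is bounded. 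The remaining terms are handled by the definition \eqref{eq:weak2scale}, because both $\vec{\psi}$ and $\mathrm{div}_\textbf{P}\vec{\psi}$ belong to $L^2(\O;\mathcal{C}_\sharp(Y^m))$. This yields
\begin{equation*}
\IOAY \vec{\xi}_0\cdot\vec{\psi}\,\mathrm{d}\vec{x}\,\mathrm{d}\vec{y} = -\IOAY u_0\,\mathrm{div}_\textbf{P}\vec{\psi}\,\mathrm{d}\vec{x}\,\mathrm{d}\vec{y}.
\end{equation*}

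Next we read off $\vec{\xi}_0=\mathrm{grad}_\textbf{P} u_0$ in the weak sense. Integration by parts on the torus $Y^m$ produces no boundary terms, and $\textbf{P}^T\nabla_{\vec{y}}$ has constant coefficients, so the formal adjoint of $\mathrm{grad}_\textbf{P}$ is $-\mathrm{div}_\textbf{P}$. Equivalently, in Fourier variables $\mathrm{grad}_\textbf{P}$ acts as multiplication by $2\pi i\,\textbf{P}^T\vec{k}$. The displayed identity therefore says that the distributional \textbf{P}-gradient of $u_0(\vec{x},\cdot)$ equals $\vec{\xi}_0(\vec{x},\cdot)$, for a.e.\ $\vec{x}$. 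To see that this holds pointwise a.e.\ in $\vec{x}$, take products $\phi(\vec{x})\vec{\chi}(\vec{y})$ as test fields and use the density of such products; a countable dense family of $\vec{\chi}$ suffices. Since $\vec{\xi}_0\in L^2$, it follows that $u_0(\vec{x},\cdot)\in\mathcal{H}_\sharp(\mathrm{grad}_\textbf{P},Y^m)$ for a.e.\ $\vec{x}$, with norm square-integrable in $\vec{x}$. This proves \eqref{eq:u_2s_weak} and \eqref{eq:etagradu_2s}.

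The main point requiring care is the meaning of $\mathcal{H}_\sharp(\mathrm{grad}_\textbf{P},Y^m)$. We must check that the weak \textbf{P}-gradient, defined by duality against smooth periodic fields, is the one the paper intends. The Fourier characterisation makes this clear: $u$ belongs to the space iff $\sum_{\vec{k}}|\hat u_{\vec{k}}|^2|\textbf{P}^T\vec{k}|^2<\infty$. Condition \eqref{criterion} is not needed for the identification itself; it enters only through Proposition~\ref{prop:weaktwoscaleprop}, which is what guarantees that the two-scale limits exist.
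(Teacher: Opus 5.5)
Your proposal is correct and is essentially the argument the paper has in mind: the paper gives no separate proof and presents the result as the cut-and-project analogue of Allaire's Proposition~1.14(ii). Your argument is exactly that adaptation. You extract two-scale limits of $u_\e$ and $\e\,\mathrm{grad}\,u_\e$, integrate by parts against $\vec{\psi}(\vec{x},\textbf{P}\vec{x}/\e)$ using $-\mathrm{div}_\textbf{P}$ as the adjoint of $\mathrm{grad}_\textbf{P}$, and then identify the limit for a.e.\ $\vec{x}$ via a countable family of test fields (e.g.\ Fourier modes) to land in $L^2(\O,\mathcal{H}_\sharp(\mathrm{grad}_\textbf{P},Y^m))$.
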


The following result from \cite{Bouchitte+etal2010,wellander2018two} is the two-scale and cut projection counterpart of the two-scale limits of divergence free bounded sequences,   \emph{e.g.} see \cite{Allaire1992} for the divergence free case. The proof uses similar ideas to Propositions~\ref{prop:grad-split} and \ref{prop:eta_grad}.

\begin{proposition}
\label{prop:eta_div_free}
Let  $(\vec{u}_\e)$  be a divergence free sequence, uniformly bounded in $L^2(\Omega;\R^n)$. Then, there exist a subsequence $(\vec{u}_{\e_k})$
and a function {$\vec{u}_0\in  L^2(\O,{\mathcal H}_{\sharp}(\mathrm{div}_{\textbf{P}_0},Y^m))$}
such that
\begin{equation*}
\vec{u}_{\e_k} \wdto \vec{u}_0(\vec{x},\vec{y}).
\end{equation*}
The function $\vec{u}_0$ satisfies
\begin{equation}
\label{eq:div_R_free_limit}
         \begin{aligned}
& \mathrm{div}_\textbf{P}\, \vec{u}_0(\vec{x},\vec{y}) = 0, \qquad \mbox{a.e. } \Omega\times Y^m,  \\
& \mathrm{div} \int_{Y^m} \vec{u}_0(\vec{x},\vec{y}) \; \mathrm{d}\vec{y} = 0,  \qquad \mbox{a.e. } \Omega.
         \end{aligned}
\end{equation}
\end{proposition}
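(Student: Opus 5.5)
By Proposition~\ref{prop:weaktwoscaleprop}, since $(\vec{u}_\e)$ is bounded in $L^2(\Omega;\R^n)$, I extract a subsequence and a limit $\vec{u}_0\in L^2(\Omega\times Y^m;\R^n)$, $Y^m$-periodic in $\vec{y}$, such that $\vec{u}_{\e_k}\wdto\vec{u}_0$. I then identify the two constraints in \eqref{eq:div_R_free_limit} by testing $\mathrm{div}\,\vec{u}_{\e_k}=0$ against two families of test functions. Throughout I use the chain rule
\[
\nabla_{\vec{x}}\left[\psi\left(\vec{x},\tfrac{\textbf{P}\vec{x}}{\e}\right)\right]=(\nabla_{\vec{x}}\psi)\left(\vec{x},\tfrac{\textbf{P}\vec{x}}{\e}\right)+\tfrac{1}{\e}(\mathrm{grad}_\textbf{P}\psi)\left(\vec{x},\tfrac{\textbf{P}\vec{x}}{\e}\right),
\]
which follows from $\partial_{x_i}(\textbf{P}\vec{x})_j=P_{ji}$.

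For the microscopic constraint, I take $\psi\in D(\Omega,\mathcal{C}^\infty_\sharp(Y^m))$ with compact support in $\vec{x}$ and use $\e_k\psi(\vec{x},\textbf{P}\vec{x}/\e_k)$ as the test function in the weak form of $\mathrm{div}\,\vec{u}_{\e_k}=0$. This gives
\[
0=\int_\Omega \vec{u}_{\e_k}\cdot\left[\e_k\nabla_{\vec{x}}\psi+\mathrm{grad}_\textbf{P}\psi\right]\left(\vec{x},\tfrac{\textbf{P}\vec{x}}{\e_k}\right)\mathrm{d}\vec{x}.
\]
The first term is $O(\e_k)$ by the $L^2$ bound. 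The second term converges, by the definition \eqref{eq:weak2scale} with the admissible test function $\mathrm{grad}_\textbf{P}\psi\in L^2(\Omega,\mathcal{C}_\sharp(Y^m;\R^n))$, to $\int\!\!\int_{\Omega\times Y^m}\vec{u}_0\cdot\mathrm{grad}_\textbf{P}\psi$. Vanishing of this limit for all such $\psi$ says exactly that $\mathrm{div}_\textbf{P}\vec{u}_0=0$ in the distributional sense on $\Omega\times Y^m$, since $\mathrm{grad}_\textbf{P}$ and $-\mathrm{div}_\textbf{P}$ are formally adjoint on the torus (periodicity kills the boundary terms). The divergence is then in $L^2_\sharp$, namely identically zero, so $\vec{u}_0\in L^2(\Omega,\mathcal{H}_\sharp(\mathrm{div}_{\textbf{P}_0},Y^m))$. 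Density of smooth functions gives the a.e.\ statement.

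For the macroscopic constraint, I test with $\phi\in\mathcal{C}_c^\infty(\Omega)$, which is independent of $\vec{y}$. Then $0=\int_\Omega\vec{u}_{\e_k}\cdot\nabla\phi$. Passing to the limit with $\vec{y}$-independent test functions gives $\int_\Omega\left(\int_{Y^m}\vec{u}_0\,\mathrm{d}\vec{y}\right)\cdot\nabla\phi=0$, so $\mathrm{div}\int_{Y^m}\vec{u}_0\,\mathrm{d}\vec{y}=0$ in $\mathcal{D}'(\Omega)$, hence a.e. This is just weak $L^2$ convergence to the cell average.

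The main obstacle is the identification $\mathrm{div}_\textbf{P}\vec{u}_0=0$ as an honest statement in $L^2$ rather than only a distributional one. It requires checking that the test class $D(\Omega,\mathcal{C}^\infty_\sharp(Y^m))$ is rich enough, which can be done via Fourier series in $\vec{y}$: the condition reduces to $\textbf{P}^T\vec{k}\cdot\hat{\vec{u}}_{0,\vec{k}}(\vec{x})=0$ for every $\vec{k}$. I would verify this mode by mode, using test functions $\psi=\phi(\vec{x})e^{2\pi i\vec{k}\cdot\vec{y}}$. The criterion \eqref{criterion} is not needed for this step, only for the existence result of Proposition~\ref{prop:weaktwoscaleprop}.
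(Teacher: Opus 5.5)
Your proof is correct and follows essentially the same route the paper relies on. The paper gives no detailed proof: it cites the standard oscillating-test-function argument from the cited works, which has the same structure as Propositions~\ref{prop:grad-split} and~\ref{prop:eta_grad}. That argument extracts a two-scale limit and then tests $\mathrm{div}\,\vec{u}_\eta=0$ against $\eta\psi(\vec{x},\textbf{P}\vec{x}/\eta)$ and against $\phi(\vec{x})$, exactly as you do, and your compact-support requirement in $\vec{x}$ and your countable Fourier-mode check for the a.e.\ statement in $\vec{x}$ correctly fill in details the paper leaves implicit.
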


\section{Low-Frequency Limit
}

We are interested in the limit of a sequence of spectral problems which amounts to finding $(\lambda_\eta,v_\eta)\in \R^+ \times H^1_0(\Omega)$, $v_\eta\neq 0$, such that
 \begin{equation}  \label{eq:NL_eta}
     \left\{
        \begin{aligned}
          -  \mathrm{div} \; \left( A\left( \vec{x}, \frac{\textbf{P}\vec{x}}{\e} \right) \nabla v_\eta(\vec{x}) \right ) & = {\lambda_\eta} v_\eta(\vec{x}) \;, \qquad \vec{x} \in \Omega,\\
            v_\e|_{\partial\Omega} & = 0.
        \end{aligned}
    \right.
 \end{equation}

This problem can be reframed by introducing the Green operator ${\mathcal G}_\eta$ in ${\mathcal L}(\Omega)$ defined for every $f\in L^2(\Omega)$ by ${\mathcal G}_\eta f=u_\eta$ where $u_\eta$ is the unique solution in  $H^1_0(\Omega)$ (by Lax-Milgram) of
\begin{equation}  \label{eq:GREEN_eta}
     \left\{
        \begin{aligned}
          -  \mathrm{div} \; \left( A\left( \vec{x}, \frac{\textbf{P}\vec{x}}{\e} \right) \nabla u_\eta(\vec{x}) \right )  + u_\eta(\vec{x})
          & = f(\vec{x}) \;, \qquad \vec{x} \in \Omega,\\
            u_\e|_{\partial\Omega} & = 0.
        \end{aligned}
    \right.
 \end{equation}

The Green operator ${\mathcal G}_\eta$ is self-adjoint and compact for a fixed $\eta>0$ and its spectrum consists of a countable set of eigenvalues that can be ordered (with the possibility of multiple eigenvalues) by decreasing order with zero as the accumulation point:
\begin{equation}
    \sigma({\mathcal G}_\eta)=\{0\}\cup\bigcup_{k\geq 1}\Lambda^k_\eta \; , \; \Lambda^1_\eta\geq \Lambda^2_\eta\geq
    \cdots
    \Lambda^k_\eta \cdots \to 0.
\end{equation}
Thus, the spectrum of (\ref{eq:GREEN_eta}) is discrete and consists of a countable set of eigenvalues converging to zero. Additionally, there is a normalized eigenfunction $u^k_\eta$ in $L^2(\Omega)$ associated with each eigenvalue $\Lambda_\eta^k$, such that the family $\{u^k_\eta\}_{k\geq 1}$ forms an orthonormal basis of $L^2(\Omega)$. It is well known \cite{bensoussan2011asymptotic} that the  eigenvalues $\Lambda_\eta^k$ satisfy $\Lambda_\eta^k=(\lambda_\eta^k+1)^{-1}$ and thus
the spectrum of (\ref{eq:NL_eta}) is discrete and consists of a countable set of strictly positive eigenvalues converging to infinity. 

We would like to analyse the limit of the spectrum $\sigma_\eta$ of (\ref{eq:NL_eta}) when $\eta$ tends to zero. We shall assume in this section that $k$ is fixed. To do so, we need to pass to the limit in (\ref{eq:GREEN_eta}), which is established in the following Proposition

\begin{proposition}\label{maintheorem_Homogenized_spectrum}
The sequence of solutions  $(u_\e)$ of the sequence of problems (\ref{eq:GREEN_eta}) converges weakly in $H^1_0(\Omega)$  to the solution $u$ of the homogenized problem
 \begin{equation}  \label{eq:homogenized_static_strong}
     \left\{
        \begin{aligned}
   -  \mathrm{div} \;  A^h(\vec{x})  \nabla  u(\vec{x})
   + u(\vec{x})
   & =    f (\vec{x})  \;, \qquad \vec{x} \in \Omega,\\
            u|_{\partial\Omega} & = 0,
        \end{aligned}
        \right.
\end{equation}
where

\begin{equation}  \label{eq:homogenized_sigma_static}
     A^h_{ik}(\vec{x}) =   \int_{Y^m} A_{ij}\left(\vec{x},\vec{y} \right) \left( \delta_{jk} -  \nabla_{\textbf{P}}\chi^k (\vec{x},\vec{y})\right) \mathrm{d}\vec{y},
\end{equation}
and $\nabla_{\textbf{P}}\chi^k \in \mathcal{C}^{0,\alpha}(\O,{\mathcal L}_\textbf{P})$
solves the local equation
\begin{equation}\label{eq:local_static}
    \int_{Y^m} A_{ij}\left(\vec{x},\vec{y} \right) \left( \delta_{jk} -  \nabla_{\textbf{P}}\chi^k (\vec{x},\vec{y})\right)  \nabla_\textbf{P} \phi(\vec{y}) \; \mathrm{d}\vec{y}=0.
\end{equation}
\end{proposition}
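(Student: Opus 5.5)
We follow the standard two-scale energy method, adapted to the cut-and-project setting through Proposition~\ref{prop:grad-split}. The plan is to get uniform bounds, pass to the two-scale limit in a suitably chosen variational formulation, and then decouple the resulting two-scale system into the cell problem \eqref{eq:local_static} and the homogenized problem \eqref{eq:homogenized_static_strong}.

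\emph{A priori bound and extraction.} Test \eqref{eq:GREEN_eta} with $\overline{u_\eta}$. Uniform ellipticity (Assumption~\ref{ass:A}(iii)) gives
\begin{equation*}
\min(\gamma,1)\,\|u_\eta\|_{H^1(\Omega)}^2\le \|f\|\,\|u_\eta\|,
\end{equation*}
so $(u_\eta)$ is bounded in $H^1_0(\Omega)$. By Proposition~\ref{prop:grad-split} there is a subsequence along which $u_\eta\rightharpoonup u$ weakly in $H^1_0(\Omega)$, with $u_\eta\dto u$. Along the same subsequence,
\begin{equation*}
\nabla u_\eta\wdto \nabla u(\vec{x})+\nabla_{\textbf{P}}u_1(\vec{x},\vec{y}),
\qquad \nabla_{\textbf{P}}u_1\in L^2(\Omega,L^2_\sharp(Y^m;\R^n)).
\end{equation*}
The trace condition $u\in H^1_0$ is inherited because $H^1_0(\Omega)$ is weakly closed.

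\emph{Passage to the limit.} Take test functions
\begin{equation*}
\phi(\vec{x})+\eta\,\phi_1\!\left(\vec{x},\tfrac{\textbf{P}\vec{x}}{\eta}\right),
\qquad \phi\in\mathcal{D}(\Omega),\ \phi_1\in\mathcal{D}(\Omega,\mathcal{C}^\infty_\sharp(Y^m)).
\end{equation*}
Their gradient is
\begin{equation*}
\nabla\phi+\nabla_{\textbf{P}}\phi_1\!\left(\vec{x},\tfrac{\textbf{P}\vec{x}}{\eta}\right)+O(\eta).
\end{equation*}
Because $A$ is continuous in both variables, the function $A^T(\vec{x},\vec{y})(\nabla\phi+\nabla_{\textbf{P}}\phi_1)$ lies in $L^2(\Omega,\mathcal{C}_\sharp(Y^m))$ and is an admissible test function in \eqref{eq:weak2scale}. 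Letting $\eta\to0$ therefore gives the two-scale variational problem
\begin{equation*}
\int_\Omega\int_{Y^m} A(\nabla u+\nabla_{\textbf{P}}u_1)\cdot\overline{(\nabla\phi+\nabla_{\textbf{P}}\phi_1)}\,\mathrm{d}\vec{y}\,\mathrm{d}\vec{x}+\int_\Omega u\bar\phi=\int_\Omega f\bar\phi .
\end{equation*}
By density this holds for all $\phi\in H^1_0(\Omega)$ and all $\phi_1$ whose $\textbf{P}$-gradient lies in the closure of $\{\nabla_{\textbf{P}}\psi\}$, i.e.\ in $L^2(\Omega,\mathcal{L}_\textbf{P})$.

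\emph{Decoupling.} Set $\phi=0$. This shows that for a.e.\ $\vec{x}$, $\nabla_{\textbf{P}}u_1(\vec{x},\cdot)$ minimizes a coercive quadratic form on the closed subspace $\mathcal{L}_\textbf{P}$. The cell problem \eqref{eq:local_static} has a unique solution $\nabla_{\textbf{P}}\chi^k(\vec{x},\cdot)\in\mathcal{L}_\textbf{P}$ by Lax--Milgram on the Hilbert space $\mathcal{L}_\textbf{P}$, equipped with the $L^2$ norm, which is coercive by ellipticity. Linearity then gives $\nabla_{\textbf{P}}u_1=-\partial_k u\,\nabla_{\textbf{P}}\chi^k$. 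Substituting this and taking $\phi_1=0$ yields the weak form of \eqref{eq:homogenized_static_strong} with $A^h$ given by \eqref{eq:homogenized_sigma_static}.

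\emph{Hölder regularity of the correctors.} We expect this to be the main obstacle. The claim is that $\nabla_{\textbf{P}}\chi^k\in\mathcal{C}^{0,\alpha}(\Omega,\mathcal{L}_\textbf{P})$, with $\mathcal{L}_\textbf{P}$ carrying its $L^2$ norm. Subtract the cell equations at $\vec{x}$ and $\vec{x}'$ and test with the difference of the correctors. Ellipticity together with $[A]_{\mathcal{C}^{0,\alpha}}$ then gives
\begin{equation*}
\|\nabla_{\textbf{P}}\chi^k(\vec{x})-\nabla_{\textbf{P}}\chi^k(\vec{x}')\|\le \gamma^{-1}[A]_{\mathcal{C}^{0,\alpha}}\,|\vec{x}-\vec{x}'|^\alpha\,\bigl(1+\|\nabla_{\textbf{P}}\chi^k\|\bigr),
\end{equation*}
and the last norm is bounded uniformly in $\vec{x}$.

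\emph{Symmetry, ellipticity of $A^h$, and the full sequence.} Testing the cell problem with $\chi^l$ gives
\begin{equation*}
A^h_{lk}=\int_{Y^m} A\,(e_k-\nabla_{\textbf{P}}\chi^k)\cdot\overline{(e_l-\nabla_{\textbf{P}}\chi^l)}\,\mathrm{d}\vec{y}.
\end{equation*}
This shows $A^h$ is Hermitian. Coercivity follows from $\int_{Y^m}(\xi-\nabla_{\textbf{P}}\chi_\xi)\,\mathrm{d}\vec{y}=\xi$ together with Jensen's inequality. Hence the homogenized problem has a unique solution, and uniqueness of the limit upgrades subsequence convergence to convergence of the whole family $(u_\eta)$ weakly in $H^1_0(\Omega)$.
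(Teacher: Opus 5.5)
Your proposal is correct and follows essentially the same route as the paper. Both use two-scale compactness from Proposition~\ref{prop:grad-split} and oscillating test functions: you combine them as $\phi+\eta\phi_1$, where the paper tests with $\eta\phi_\eta$ and $\phi$ separately. Both then identify $\nabla_{\textbf{P}}u_1=-\partial_k u\,\nabla_{\textbf{P}}\chi^k$ via Lax--Milgram on $\mathcal{L}_\textbf{P}$, and obtain Hölder continuity of the correctors from the same ellipticity estimate.

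You also make explicit several points the paper leaves implicit: the a priori $H^1$ bound, the Hermiticity and coercivity of $A^h$, and the uniqueness argument that upgrades subsequence convergence to convergence of the whole family. One small correction there: where you write ``testing with $\chi^l$'', it should read testing with $\nabla_{\textbf{P}}\chi^l\in\mathcal{L}_\textbf{P}$, since $\chi^l$ itself need not exist as an $L^2$ function.
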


\begin{remark}
In proposition \ref{maintheorem_Homogenized_spectrum} and the analysis that follows, we assume Einstein's summation convention over repeated indices.  
\end{remark}
\begin{proof}
Choose $\phi \in D(\Omega;\mathcal{C}^\infty_\sharp(Y^m))$ and let
$\phi_\e(x):= \phi(x,\frac{{\textbf{P}} \vec{x}}{\e})$. Testing with $\e \phi_\e$ gives
 \begin{equation*}  \label{eq:weak_static_eps}
    \int_\Omega A\left(\vec{x},\frac{\textbf{P} \vec{x}}{\e}\right)  \nabla u_\e(\vec{x}) \cdot \left(\e \nabla \phi_\e(\vec{x}) + \nabla_\textbf{P} \phi_\e(\vec{x})\right) \; \mathrm{d}\vec{x}
    +\int_\Omega u_\e(\vec{x}) \e\phi_\e(\vec{x}) \mathrm{d}\vec{x}
    = \int_\Omega f (\vec{x}) \e  \phi_\e(\vec{x})\; \mathrm{d}\vec{x}.
\end{equation*}
Sending $\e \to 0$ gives due to Proposition~\ref{prop:grad-split}
 \begin{equation*}  \label{eq:locallimit_static}
    \int_\Omega  \int_{Y^m} A\left(\vec{x},\vec{y} \right) \left(\nabla u(\vec{x}) +  \nabla_\textbf{P} u_1(\vec{x}, \vec{y})\right) \cdot \nabla_\textbf{P} \phi(\vec{x}, \vec{y}) \; \mathrm{d}\vec{x} \mathrm{d}\vec{y}= 0.
\end{equation*}
Substituting $ \nabla_\textbf{P} u_1(\vec{x}, \vec{y}) = - \nabla_\textbf{P} \chi^k (\vec{x},\vec{y})\partial_{k}u(\vec{x})$ yields the local equation
\begin{equation*}  \label{eq:local_static_proof}
    \int_{Y^m} A_{ij}\left(\vec{x},\vec{y} \right) \left( \delta_{jk} -  \nabla_{\textbf{P}}\chi^k (\vec{x,}\vec{y})\right)\cdot \nabla_{\textbf{P}} \phi(\vec{x},\vec{y}) \; \mathrm{d}\vec{y}=0.
\end{equation*}
The homogenized equation is obtained by choosing a test function $\phi \in D(\Omega)$. We get analogously
 \begin{equation*}  \label{eq:globallimit_static}
    \begin{split}
    \int_\Omega  \int_{Y^m} A\left(\vec{x},\vec{y} \right) \left( \nabla u(\vec{x}) +  \nabla_\textbf{P} u_1(\vec{x}, \vec{y})\right) \cdot \nabla \phi(\vec{x}) \; \mathrm{d}\vec{x} \mathrm{d}\vec{y}
    &+ \int_\Omega  \int_{Y^m} u(\vec{x})  \phi(\vec{x}) \; \mathrm{d}\vec{x} \mathrm{d}\vec{y}
    \\&\qquad\qquad
    =   \int_\Omega  \int_{Y^m} f (\vec{x})  \phi(\vec{x}) \; \mathrm{d}\vec{x} \mathrm{d}\vec{y}
    \end{split}
\end{equation*}
That is
 \begin{equation*}  \label{eq:homogenized_static}
    \int_\Omega  A^h(\vec{x})  \nabla  u(\vec{x})  \cdot \nabla \phi(\vec{x}) \; \mathrm{d}\vec{x}
    +\int_\Omega   u (\vec{x})  \phi(\vec{x}) \; \mathrm{d}\vec{x}
    =   \int_\Omega   f (\vec{x})  \phi(\vec{x}) \; \mathrm{d}\vec{x},
\end{equation*}
where
\begin{equation*}
          A^h_{ik}(\vec{x}) =   \int_{Y^m} A_{ij}\left(\vec{x},\vec{y} \right) \left( \delta_{jk} -  \nabla_{\textbf{P}}\chi^k (\vec{x}, \vec{y})\right) \mathrm{d}\vec{y}.
\end{equation*}
The local equation \eqref{eq:local_static} has a bounded gradient $\nabla_\textbf{P}\chi^k $ which belongs to ${\mathcal L}_\textbf{P}$, by a standard argument using Lax-Milgram. The regularity in $\vec{x}$ follows from
\begin{equation*}
    \begin{aligned}
        \gamma\lVert \nabla_{\textbf{P}}\chi^k(\vec{x}_1,\cdot) - \nabla_{\textbf{P}}\chi^k(\vec{x}_2,\cdot)\rVert^2 &\leq \int_{Y^m}A(\vec{x}_1,\vec{y}) (\nabla_{\textbf{P}}\chi^k(\vec{x}_1,\vec{y}) - \nabla_{\textbf{P}}\chi^k(\vec{x}_2,\vec{y}))\\
        &\hspace{3cm}\cdot \overline{(\nabla_{\textbf{P}}\chi^k(\vec{x}_1,\vec{y} - \nabla_{\textbf{P}}\chi^k(\vec{x}_2,\vec{y}))}dy\\
        &\leq\lVert A(\vec{x}_1,\cdot)-A(\vec{x}_2,\cdot)\rVert_{L^\infty}\lVert \nabla_{\textbf{P}}\chi^k(\vec{x}_1,\cdot) - \nabla_{\textbf{P}}\chi^k(\vec{x}_2,\cdot)\rVert\\
        &\hspace{3cm}\cdot(1 + \lVert \nabla_{\textbf{P}}\chi^k(\vec{x}_1,\cdot)\rVert)\\
        &\leq [A]_{\mathcal{C}^{0,\alpha}} \lvert\vec{x}_1-\vec{x}_2\rvert^\alpha\lVert \nabla_{\textbf{P}}\chi^k(\vec{x}_1,\cdot) - \nabla_{\textbf{P}}\chi^k(\vec{x}_2,\cdot)\rVert\\
        &\hspace{3cm}\cdot (1 + \lVert \nabla_{\textbf{P}}\chi^k(\vec{x}_1,\cdot)\rVert).
    \end{aligned}
\end{equation*}
\end{proof}




Thanks to the compact embedding of $H^1_0(\Omega)$ into $L^2(\Omega)$, we deduce from proposition \ref{maintheorem_Homogenized_spectrum} that for $f\in L^2(\Omega)$, the sequence of solutions  $(u_\e)$ of (\ref{eq:GREEN_eta}) converges strongly in $L^{2}(\Omega)$ to the unique solution $u$ of the homogenized problem (\ref{eq:homogenized_static_strong})-(\ref{eq:local_static}). It follows that the sequence of Green operators $({\mathcal G}_\eta)$ converges in norm in $L^2(\Omega)$ to the Green operator ${\mathcal G}$ of the homogenized problem defined for every $f\in L^2(\Omega)$ by ${\mathcal G} f=u$, where $u$ is the unique solution in $H^1_0(\Omega)$ of the homogenized problem (\ref{eq:homogenized_static_strong})-(\ref{eq:local_static}).

The Green operator ${\mathcal G}$ is self-adjoint and compact since the homogenized parameter $A^h_{ik}$ inherits the properties of the matrix $A_{ij}$, see Assumption \ref{ass:A}. So, for $\eta=0$, the spectrum associated with (\ref{eq:homogenized_static_strong})-(\ref{eq:local_static}) is discrete and consists of a countable set of eigenvalues converging to zero, with zero as the accumulation point:
\begin{equation}
    \sigma({\mathcal G})=\{0\}\cup\bigcup_{k\geq 1}\Lambda^k \; , \; \Lambda^1\geq \Lambda^2\geq
    \cdots
    \Lambda^k \cdots \to 0.
\end{equation}
Further, there is a normalized eigenfunction $u^k$ in $L^2(\Omega)$ associated with each eigenvalue, such that the family ${(u^k)}_{k\geq 1}$ forms an orthonormal basis of $L^2(\Omega)$.

It can be shown using the min-max principle that
for a fixed integer $k$, $| \Lambda_\eta^k-\Lambda^k | \leq \Vert {\mathcal G}_\eta - {\mathcal G} \Vert_{{\mathcal L}(\Omega)}$ and thus the convergence in norm of ${\mathcal G}_\eta$ to ${\mathcal G}$ yields the convergence of each individual eigenvalue $\Lambda_\eta^k$ ordered by decreasing order to $\Lambda^k$. Therefore, we obtain the following result

\begin{theorem}\label{thm: SpectralhomogenizedLimit}
As $\e$ goes to zero, the spectrum $\sigma_\e$ of (\ref{eq:NL_eta}) converges pointwise to $\sigma^h=\{\Lambda^{-1}-1 \mid \Lambda\in \sigma({\mathcal G})\}$.
\end{theorem}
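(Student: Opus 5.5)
The plan is to deduce the statement from the norm-resolvent convergence $\mathcal{G}_\eta\to\mathcal{G}$ established just before the theorem, and then transfer it to the original spectrum through the map $\Lambda\mapsto\Lambda^{-1}-1$.

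First, I will make the norm convergence precise. Proposition~\ref{maintheorem_Homogenized_spectrum} gives, for each fixed $f$, that $\mathcal{G}_\eta f\rightharpoonup\mathcal{G}f$ weakly in $H^1_0(\Omega)$, and hence strongly in $L^2(\Omega)$ by Rellich. This is only strong operator convergence. To upgrade it, I use that the family $\mathcal{G}_\eta$ is \emph{collectively compact}: by Lax--Milgram and uniform ellipticity, $\|\mathcal{G}_\eta f\|_{H^1_0}\le C\|f\|$ with $C$ independent of $\eta$. Suppose $\|\mathcal{G}_\eta-\mathcal{G}\|\not\to 0$. Then there are $f_\eta$ with $\|f_\eta\|=1$ and $\|(\mathcal{G}_\eta-\mathcal{G})f_\eta\|\ge\delta$.
\begin{itemize}
\item Along a subsequence, $f_\eta\rightharpoonup f$ weakly in $L^2$.
\item The proof of Proposition~\ref{maintheorem_Homogenized_spectrum} only uses the right-hand side through $\int f_\eta\phi$, so it goes through verbatim for weakly converging data. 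Thus $\mathcal{G}_\eta f_\eta\to\mathcal{G}f$, strongly in $L^2$ by compactness of the $H^1_0$ bound.
\item By compactness of $\mathcal{G}$, also $\mathcal{G}f_\eta\to\mathcal{G}f$.
\end{itemize}
This contradicts $\|(\mathcal{G}_\eta-\mathcal{G})f_\eta\|\ge\delta$, so the norm convergence holds. I expect this upgrade to be the only real obstacle. The point to check is that the two-scale limit argument, including identification of the limit, tolerates weakly converging data. Uniqueness of the homogenized solution then makes the whole sequence converge, not just subsequences.

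Second, I will compare eigenvalues via min-max. All $\mathcal{G}_\eta$ and $\mathcal{G}$ are self-adjoint, nonnegative and compact. Using
\[
\Lambda^k_\eta=\max_{\dim V=k}\min_{f\in V,\|f\|=1}\langle\mathcal{G}_\eta f,f\rangle,
\]
and $|\langle(\mathcal{G}_\eta-\mathcal{G})f,f\rangle|\le\|\mathcal{G}_\eta-\mathcal{G}\|$, I obtain $|\Lambda^k_\eta-\Lambda^k|\le\|\mathcal{G}_\eta-\mathcal{G}\|\to0$ for each fixed $k$.

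Finally, I will transfer this to the original problem. Since $\lambda^k_\eta=(\Lambda^k_\eta)^{-1}-1$ and $\Lambda^k>0$ (the operator $\mathcal{G}$ is injective), the map $t\mapsto t^{-1}-1$ is continuous near $\Lambda^k$. Hence $\lambda^k_\eta\to(\Lambda^k)^{-1}-1$ for every $k$, which is the pointwise convergence of $\sigma_\eta$ to $\sigma^h$. The point $0\in\sigma(\mathcal{G})$ corresponds to the accumulation at infinity and contributes no finite eigenvalue.
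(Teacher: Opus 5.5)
Your proposal is correct and takes essentially the paper's route. The paper also upgrades to norm convergence by taking near-maximisers $f_\eta$, extracting a weak limit $f$, and using Proposition~\ref{maintheorem_Homogenized_spectrum} to get $\mathcal{G}_\eta f_\eta\to\mathcal{G}f$ and compactness of $\mathcal{G}$ to get $\mathcal{G}f_\eta\to\mathcal{G}f$, then concludes via the min-max bound $|\Lambda^k_\eta-\Lambda^k|\le\|\mathcal{G}_\eta-\mathcal{G}\|$; your check that the homogenization argument tolerates weakly converging right-hand sides justifies a step the paper uses without comment.
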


The proof follows closely that of Theorem 2.2 in \cite{allaire1998bloch} with Proposition 2.5 in that work replaced by our proposition \ref{maintheorem_Homogenized_spectrum}.
A key inequality in the proof is
\begin{equation}\label{bleue}
   \Vert {\mathcal G}_\eta - {\mathcal G} \Vert_{{\mathcal L}(\Omega)} =  \sup_{{\Vert f \Vert}_{L^2(\Omega)}=1} \Vert{\mathcal G}_\e f-{\mathcal G}f\Vert_{L^2(\Omega)} \leq\Vert{\mathcal G}_\e f_\e-{\mathcal G}f\Vert_{L^2(\Omega)} + \Vert{\mathcal G} f_\e-{\mathcal G}f\Vert_{L^2(\Omega)} + \e,
\end{equation}
where $f_\e$ is an $\e$ minimizer.
Let us pass to the limit when $\e$ goes to zero.
Since the sequence $(f_\e)$ is bounded in $L^2(\Omega)$, $f_\e\rightharpoonup f$ weakly in $L^2(\Omega)$ up to a subsequence. Moreover, ${\mathcal G}_\e f_\e\to{\mathcal G} f$ strongly in $L^2(\Omega)$ thanks to proposition \ref{maintheorem_Homogenized_spectrum} and ${\mathcal G}f_\e\to{\mathcal G} f$ strongly in $L^2(\Omega)$ since ${\mathcal G}$ is a compact operator, and thus passing to the limit in (\ref{bleue}) we get $\Vert {\mathcal G}_\eta - {\mathcal G} \Vert_{{\mathcal L}(\Omega)}\to 0$.

\begin{remark}
Consider a rod with a quasiperiodic conductivity distribution, $\sigma$, generated by a cut-and-projection of an m-dimensional unit cell $Y^m$ with a $\textbf{P}$ satisfying \eqref{criterion}. The homogenized conductivity of the rod defined by  \eqref{eq:homogenized_sigma_static} is given by  the harmonic mean over $Y^m$ and thus (\ref{eq:homogenized_static_strong}) reduces to
\begin{equation}  \label{eq:GREEN1D_0}
     \left\{
        \begin{aligned}
   -  \frac{d}{dx} \;  {{\left(\int_{Y^m}
A^{-1}(x,\vec{y})d\vec{y}\right)}^{-1}}  \frac{d}{dx}  u(x)  + u(x)
& =    f(x)  \;, \qquad x \in ]0,1[,\\
            u(0)=u(1) & = 0.
        \end{aligned}
        \right.
\end{equation}
\end{remark}

The fact that $\lambda_\eta^k\to\lambda^k$ when $\e\to 0$ for any fixed integer $k$ does not inform us about the asymptotic behaviour of sequences of eigenvalues $\lambda_\eta^{k(\e)}$ where $k(\e)$ tends to infinity. In \cite{allaire1998bloch} different asymptotic regimes have been studied in the periodic case depending upon whether the limit of $\e^{2}\lambda_\e$ is finite (critical scale), infinite (super-critical scale) or zero (sub-critical scale). We shall follow a similar route for the quasi-periodic setting.  

In the following section, we shall consider the critical scale i.e. eigenvalues of the order $\eta^{-2}$ for the problem (\ref{eq:NL_eta}). The physical interpretation of this model is based on the property of the possible appearance of band gaps in the spectrum, i.e. frequency intervals for which no wave can propagate within the medium. This problem has been addressed in the periodic case in \cite{allaire1998bloch}. 

\section{High-Frequency Limit}
In this section, we outline a different approach to the high-frequency limit of the spectrum $\sigma_\eta$. While the ideas are very much similar to those of \cite{allaire1998bloch}, a simple change of variables transforms the problem to a much more natural domain. This rescaling was originally introduced for spectral problems with periodic coefficients by Conca  and Vanninathan \cite{concaSpectral1988}
and further developed by Allaire and Conca \cite{allaire1998boundary}, where it was combined with Bloch wave decomposition. Here, we show that the rescaling alone, together with pseudo-eigenfunction arguments, suffices to obtain the spectral decomposition---and, crucially, extends to the quasiperiodic setting where Bloch decomposition is not available.

Consider the problem of finding $(\lambda_\eta,v_\eta)$ satisfying
\begin{equation}\label{eq: LowFrequency}
    \begin{cases}
        \begin{aligned}
          -  \mathrm{div} \; \left( A\left( \vec{x}, \frac{\textbf{P} \vec{x}}{\eta} \right) \nabla v_\eta({\vec x}) \right ) & = \lambda_\eta v_\eta({\vec x}) \;, \qquad \vec{x} \in \Omega\\
            v_\eta|_{\partial\Omega} & = 0.
        \end{aligned}
    \end{cases}
\end{equation}
We are interested in the limit
\begin{equation}
    \lim_{\eta\rightarrow0} a_\eta^{2}\sigma_\eta,
\end{equation}
where $a_\eta$ is a sequence tending to zero. To this end, we rewrite equation \eqref{eq: LowFrequency} as
\begin{equation}\label{eq: HighFrequencyClassical}
    \begin{cases}
        \begin{aligned}
          -  a_\eta^2\mathrm{div} \; \left( A\left( \vec{x}, \frac{\textbf{P} \vec{x}}{\eta} \right) \nabla v_\eta({\vec x}) \right ) & = \tilde{\lambda}_\eta v_\eta({\vec x}) \;, \qquad \vec{x} \in \Omega\\
            v_\eta|_{\partial\Omega} & = 0.
        \end{aligned}
    \end{cases}
\end{equation}
If we denote the spectrum of the above as $\tilde{\sigma}_\eta$, then it is easy to see that $a_\eta^{2}\sigma_\eta = \tilde{\sigma}_\eta$. Hence we may study equation \eqref{eq: HighFrequencyClassical} instead. This is the approach taken in \cite{allaire1998bloch}.

\begin{figure}
    \centering
    \includegraphics[width=0.9\linewidth]{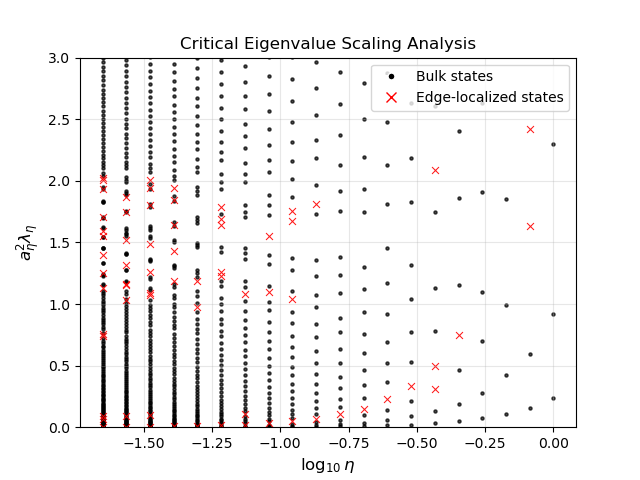}
    \caption{Eigenvalues in the critical scaling regime for $A(x,y,z) = 2 + \cos(x)^2 + \cos (y)^2\cos(z)$ along the cut $(1,\tau)$, where $\tau$ is the golden ratio.}
    \label{fig:eigenvalue_critical}
\end{figure}
Pretend for a second that $A$ is periodic in its second variable. Then, as $\eta\rightarrow0$, we can think of squeezing more and more fundamental cells into the domain $\Omega$. This picture is captured by equation \eqref{eq: HighFrequencyClassical}. However, we can also adopt a different view. Namely, instead of shrinking the size of our fundamental cells and keeping $\Omega$ fixed, we may also think of expanding $\Omega$ and keeping the size of the fundamental cells fixed. Formally, this is represented by the change of variables $\vec y  = a_\eta^{-1}\vec x$. The key observation is now the following: suppose $(\tilde{\lambda}_\eta,v_\eta)$ is a solution to \eqref{eq: HighFrequencyClassical}. Let $\tilde{v}(\vec y) = v(a_\eta\vec y)$ for $\vec y \in a_\eta^{-1}\Omega$ and observe that
\begin{align*}
    &-\mathrm{div}_{\vec y}\left(A(a_\eta \vec y, \frac{a_\eta}{\eta}\textbf{P}\vec y) \nabla_{\vec y} \tilde{v}(\vec{y})\right) = -  \partial_{i}(A_{ik}(a_\eta \vec y, \frac{a_\eta}{\eta}\textbf{P}\vec y)\partial_{k}\tilde{v}(\vec{y}))\\   
    &\quad =-  a_\eta^2 (\partial_{1,i}A_{ik})(a_\eta \vec y, \frac{a_\eta}{\eta}\textbf{P}\vec y)(\partial_kv)(a_\eta\vec y)
    +  \frac{a_\eta^2}{\eta}  P_{ji}(\partial_{2,j}A_{ik})(a_\eta \vec y, \frac{a_\eta}{\eta}\textbf{P}\vec y)(\partial_kv)(a_\eta\vec y)\\
    &\quad \qquad + a_\eta^2 A_{ik}(a_\eta \vec y, \frac{a_\eta}{\eta}\textbf{P}\vec y)(\partial_i\partial_k v)(a_\eta \vec y)\\
    &\quad =-  a_\eta^2 (\partial_{1,i}A_{ik})(\vec x, \frac{\textbf{P}\vec x}{\eta})(\partial_kv)(\vec x)
    +  \frac{a_\eta^2}{\eta}  P_{ji}(\partial_{2,j}A_{ik})(\vec x, \frac{\textbf{P}\vec x}{\eta})(\partial_kv)(\vec x)\\
    &\quad \qquad + a_\eta^2 A_{ik}(\vec x, \frac{\textbf{P}\vec x}{\eta})(\partial_i\partial_k v)(\vec x)\\
    &\qquad =-a_\eta^{2}\mathrm{div} \; \left( A\left( \vec{x}, \frac{\textbf{P} \vec{x}}{\eta} \right) \nabla v_\eta({\vec x}) \right )= \tilde{\lambda}_\eta v(\vec x)=\tilde{\lambda}_\eta \tilde{v}(\vec y).
\end{align*}

Hence, $(\tilde{\lambda}_\eta, \tilde{v})$ is a solution of
\begin{equation}\label{eq: HighFrequencyNew}
    \begin{cases}
        \begin{aligned}
          - \mathrm{div} \; \left( A\left( a_\eta \vec{y}, \frac{a_\eta}{\eta}\textbf{P} \vec{y} \right) \nabla \tilde{v}_\eta({\vec y}) \right ) & = \tilde{\lambda}_\eta\tilde{v}_\eta({\vec y}) \;, \qquad \vec{y} \in a_\eta^{-1}\Omega\\
            \tilde{v}_\eta|_{\partial a_\eta^{-1}\Omega} & = 0.
        \end{aligned}
    \end{cases}
\end{equation}
The main advantage of equation \eqref{eq: HighFrequencyNew} as opposed to \eqref{eq: HighFrequencyClassical} becomes most apparent when $a_\eta = \eta$, the \textit{critical scaling}. In this case, the scale of oscillations in the fast variable remains fixed and locally the influence of the slow variable becomes increasingly negligible. From this, we can explicitly construct approximate eigenfunctions for values in the Bloch spectrum, without having to construct complicated projection and extension operators as in \cite{allaire1998bloch}. More importantly, this allows us to treat the quasiperiodic setting as well. It will turn out that the critical scaling is precisely the asymptotic regime where the spectrum has non-trivial gap structure in the limit; an example of this is shown in Figure~\ref{fig:eigenvalue_critical}.

Before turning to the individual scalings, we first have to settle some technicalities concerning the definition of $\sigma$. We will be working with operators of the form $T\colon H^1_0\to H^{-1}$
\[
  \langle Tu,v\rangle_{H^{-1},H^1}=\int B(\vec y)\,\nabla u(\vec y)\cdot
  \overline{\nabla v(\vec y)}\,d\vec y,
\]
for a Hermitian, uniformly elliptic coefficient matrix $B$ obtained from $A$ by
freezing and rescaling. By uniform ellipticity (Assumption~\ref{ass:A}(iii)) this form
is closed and bounded below as a form in $L^2$ with domain $H^1_0$, and it is symmetric by
Assumption~\ref{ass:A}(i). The first representation theorem
\cite[Thm.~VI-2.1]{kato1966perturbation} therefore associates to it a unique
self-adjoint operator $\mathcal T\colon D(\mathcal T)\subset L^2\to L^2$ characterised by
$\langle Tu,v\rangle_{H^{-1},H^1}=\langle \mathcal T u,v\rangle_{L^2}$ for all $v\in H^1_0$ on
the domain $D(\mathcal T)=\{u\in H^1_0 : Tu\in L^2\}$. We thus write $\sigma(T):=\sigma(\mathcal T)$. Moreover, $(\mathcal T+1)^{-1}$ agrees with
the restriction to $L^2$ of $(T+1)^{-1}\colon H^{-1}\to H^1_0$. 

The following lemma will enable us to turn suitable pseudo-eigenfunctions of $T$ into estimates on the spectrum $\sigma$:


\begin{lemma}\label{lem: spectral_equivalence}
Let $T$ and $\mathcal T$ be as above, and let $\lambda\ge 0$. Suppose there
exists $u_\epsilon\in H^1_0\setminus\{0\}$ with
\begin{equation}
  \lVert (T-\lambda)u_\epsilon\rVert_{H^{-1}}\le\epsilon\,
  \lVert u_\epsilon\rVert.
\end{equation}
Then, there exists some $C>0$ depending only on the ellipticity constant $\gamma$ and
$\lVert A\rVert_{L^\infty}$ such that for $\epsilon$ small enough
\begin{equation}
  d\big(\lambda,\sigma(T)\big)\leq C(1+\lambda)\epsilon.
\end{equation}
\end{lemma}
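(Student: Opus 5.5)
The plan is to convert the $H^{-1}$ residual bound into an $L^2$ residual bound for the resolvent $(\mathcal T+1)^{-1}$, and then use the spectral theorem for that bounded self-adjoint operator. Write $R:=(T+1)^{-1}\colon H^{-1}\to H^1_0$, and set $r_\epsilon:=(T-\lambda)u_\epsilon\in H^{-1}$. Then
\[
(T+1)u_\epsilon=(1+\lambda)u_\epsilon+r_\epsilon,
\]
so applying $R$ gives
\[
u_\epsilon=(1+\lambda)R u_\epsilon+Rr_\epsilon .
\]
Since $Ru_\epsilon=(\mathcal T+1)^{-1}u_\epsilon$ for $u_\epsilon\in L^2$, we obtain
\[
\Big\lVert\Big((\mathcal T+1)^{-1}-\tfrac{1}{1+\lambda}\Big)u_\epsilon\Big\rVert=\tfrac{1}{1+\lambda}\lVert R r_\epsilon\rVert .
\]

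The key estimate is $\lVert Rr\rVert_{L^2}\le\lVert Rr\rVert_{H^1}\le C_0\lVert r\rVert_{H^{-1}}$. We get it from Lax--Milgram: testing $(T+1)w=r$ with $w$ yields
\[
\min(\gamma,1)\lVert w\rVert_{H^1}^2\le \lVert r\rVert_{H^{-1}}\lVert w\rVert_{H^1},
\]
so $C_0=1/\min(\gamma,1)$. This constant depends only on $\gamma$; the bound $\lVert A\rVert_{L^\infty}$ would only enter if one normed $H^{-1}$ via the operator itself, and in any case it can be absorbed into $C$. It follows that $\mu:=(1+\lambda)^{-1}$ satisfies
\[
\lVert((\mathcal T+1)^{-1}-\mu)u_\epsilon\rVert\le \mu C_0\epsilon\lVert u_\epsilon\rVert .
\]
Since $(\mathcal T+1)^{-1}$ is bounded and self-adjoint, the spectral theorem (the Weyl criterion in the form $d(\mu,\sigma(S))\le\lVert(S-\mu)u\rVert/\lVert u\rVert$) gives
\[
d\big(\mu,\sigma((\mathcal T+1)^{-1})\big)\le \mu C_0\epsilon .
\]

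The remaining step is to transfer this distance back through the map $f(t)=(1+t)^{-1}$. By the spectral mapping theorem, $\sigma((\mathcal T+1)^{-1})\setminus\{0\}=\{(1+s)^{-1}: s\in\sigma(\mathcal T)\}$, and $\sigma(\mathcal T)\subset[0,\infty)$ by ellipticity. Pick a spectral point $\nu$ with $|\nu-\mu|\le\mu C_0\epsilon$. If $\epsilon$ is small enough that $C_0\epsilon\le 1/2$, then $\nu\ge\mu/2>0$, so $\nu=(1+s)^{-1}$ for some $s\in\sigma(\mathcal T)$ (the closure issue at $0$ is avoided because $\nu$ stays away from $0$). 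Then
\[
|s-\lambda|=\Big|\tfrac1\nu-\tfrac1\mu\Big|=\frac{|\mu-\nu|}{\mu\nu}\le \frac{C_0\epsilon}{\nu}\le 2C_0(1+\lambda)\epsilon .
\]
This proves the claim with $C=2C_0$.

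I expect the main subtlety to be this last transfer, together with checking that $\nu$ stays uniformly away from $0$. The smallness condition $C_0\epsilon\le1/2$ handles that, and it is the reason the constant scales with $(1+\lambda)$. Everything else is a routine Lax--Milgram bound followed by the spectral theorem.
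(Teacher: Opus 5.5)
Your proposal is correct and matches the paper's proof step for step: you write $(T+1)u_\epsilon=(1+\lambda)u_\epsilon+r_\epsilon$, apply $(T+1)^{-1}$ to get an $L^2$ quasimode of the bounded self-adjoint operator $(\mathcal T+1)^{-1}$ at $\mu=(1+\lambda)^{-1}$, and transfer the distance back through $t\mapsto(1+t)^{-1}$. Your version is slightly more careful than the paper's: you make the Lax--Milgram constant explicit, showing it depends only on $\gamma$, and you justify the final transfer by checking that the nearby spectral point satisfies $\nu\ge\mu/2$, which the paper leaves implicit in ``for $\epsilon$ small enough''.
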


\begin{proof}
Set $r_\epsilon:=(T-\lambda)u_\epsilon\in H^{-1}$, so that
$\lVert r_\epsilon\rVert_{H^{-1}}\leq\epsilon\lVert u_\epsilon\rVert_{H^1}$.
From $(T+1)u_\epsilon=(\lambda+1)u_\epsilon+r_\epsilon$ in $H^{-1}$, applying
$(T+1)^{-1}\colon H^{-1}\to H^1_0$ and using that this resolvent restricts on
$L^2$ to $(\mathcal T+1)^{-1}$, we obtain
\begin{equation}
  (\mathcal T+1)^{-1}u_\epsilon
  =(\lambda+1)^{-1}(u_\epsilon-(T+1)^{-1}r_\epsilon).
\end{equation}
Since $(T+1)^{-1}$ is bounded $H^{-1}\to H^1_0$ with norm depending only on
$\gamma$ and $\lVert A\rVert_{L^\infty}$,
\begin{equation}
  \lVert[(\mathcal T+1)^{-1}-(\lambda+1)^{-1}]u_\epsilon
  \rVert
  =(\lambda+1)^{-1}\lVert (T+1)^{-1}r_\epsilon\big\rVert
  \le\frac{C\,\epsilon}{\lambda+1}\,\lVert u_\epsilon\rVert.
\end{equation}
The operator $(\mathcal T+1)^{-1}$ is bounded and self-adjoint, thus the resolvent identity yields
\begin{equation}
  d((\lambda+1)^{-1},\,\sigma((\mathcal T+1)^{-1}))
  \le \frac{C\,\epsilon}{(1+\lambda)}.
\end{equation}
Using that
$\sigma\big((\mathcal T+1)^{-1}\big)
=\{(\tilde\lambda+1)^{-1}:\tilde\lambda\in\sigma(\mathcal T)\}$, we obtain for $\epsilon$ small enough
\begin{equation}
  d(\lambda,\,\sigma(\mathcal T))
  \leq 2C(1+\lambda)\epsilon,
\end{equation}
from which the claim follows.
\end{proof}

\subsection{Critical Scaling}\label{sec:critical}
In the critical scaling, when $a_\eta = \eta$, equation \eqref{eq: HighFrequencyNew} simplifies to
\begin{equation}\label{eq: HighFrequencyCritical}
    \begin{cases}
        \begin{aligned}
          - \mathrm{div} \; \left( A\left( \eta \vec{y},\textbf{P} \vec{y} \right) \nabla \tilde{v}_\eta({\vec y}) \right ) & = \tilde{\lambda}_\eta\tilde{v}_\eta({\vec y}) \;, \qquad \vec{y} \in \eta^{-1}\Omega,\\
            \tilde{v}_\eta|_{\partial \eta^{-1}\Omega} & = 0.
        \end{aligned}
    \end{cases}
\end{equation}
We begin by proving the inclusion of the Bloch spectrum. To this end, for $\vec x_0 \in \Omega$ we will first need to define the operators
 \begin{equation}
     T_{\vec{x}_0}: H^1(\mathbb{R}^n)\rightarrow H^{-1}(\mathbb{R}^n), u\mapsto \langle T_{\vec{x}_0}u,v \rangle_{H^{-1},H^{1}}:=\int_{\mathbb{R}^n}A\left( \vec{x}_0, \textbf{P}\vec{y}\right)\nabla u(y) \cdot \nabla \overline{v}(y) dy.
 \end{equation}
 \begin{equation}
     T_\eta: H^1_0(\eta^{-1}\Omega)\rightarrow H^{-1}(\eta^{-1}\Omega), u\mapsto \langle T_{\eta}u,v \rangle_{H^{-1},H^{1}}:=\int_{\eta^{-1}\Omega}A\left( \eta\vec{y}, \textbf{P}\vec{y}\right)\nabla u(y) \cdot \nabla \overline{v}(y) dy .
 \end{equation}

\begin{proposition}\label{prop: BlochSpectrum}
    It holds that
    \begin{equation}
    \lim_{\eta\rightarrow 0}\eta^{2}\sigma_\eta \supset \sigma_{\text{Bloch}}:= \bigcup_{\vec{x}_0\in \bar{\Omega}}\sigma(T_{\vec{x}_0}).
    \end{equation}
\end{proposition}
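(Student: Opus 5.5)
We interpret the inclusion in the sense that for every $\lambda\in\sigma(T_{\vec{x}_0})$ with $\vec{x}_0\in\bar{\Omega}$ there exist $\lambda_\eta\in\eta^2\sigma_\eta$ with $\lambda_\eta\to\lambda$. Since $\eta^2\sigma_\eta$ is the spectrum of $T_\eta$ by the rescaling leading to \eqref{eq: HighFrequencyCritical}, it suffices to show that $d(\lambda,\sigma(T_\eta))\to 0$. By Lemma~\ref{lem: spectral_equivalence}, this reduces to constructing, for each $\epsilon>0$ and all small $\eta$, a pseudo-eigenfunction $w_\eta\in H^1_0(\eta^{-1}\Omega)$ with $\lVert (T_\eta-\lambda)w_\eta\rVert_{H^{-1}}\le \epsilon\lVert w_\eta\rVert$.

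\emph{Step 1: interior points and an approximate eigenfunction of the frozen operator.} By closedness of the target set under limits (a diagonal argument), it is enough to treat $\vec{x}_0\in\Omega$. The reason is that $\vec{x}\mapsto T_{\vec{x}}$ is norm-continuous as a map $H^1\to H^{-1}$, with error at most $[A]_{\mathcal{C}^{0,\alpha}}|\vec{x}-\vec{x}_0|^\alpha$. So a Weyl sequence for $T_{\vec{x}_0}$ with $\vec{x}_0\in\partial\Omega$ is a pseudo-eigenfunction for $T_{\vec{x}}$ with $\vec{x}\in\Omega$ nearby. Fix $\vec{x}_0\in\Omega$ and $\lambda\in\sigma(T_{\vec{x}_0})$. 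Since $T_{\vec{x}_0}$ is self-adjoint, Weyl's criterion gives $u\in D(\mathcal T_{\vec{x}_0})$ with $\lVert u\rVert=1$ and $\lVert(\mathcal T_{\vec{x}_0}-\lambda)u\rVert\le\epsilon$. By density, and cutting off with $\chi(\cdot/R)$ for large $R$, we may take $u$ compactly supported in a ball $B_R$. The commutator terms are $O(R^{-1})(\lVert\nabla u\rVert+\lVert u\rVert)$ in $H^{-1}$, and $\lVert\nabla u\rVert$ is controlled by $\lambda$ and $\epsilon$ through ellipticity. After this step $\lVert(T_{\vec{x}_0}-\lambda)u\rVert_{H^{-1}}\le 2\epsilon$, and $u$ lives in a fixed ball.

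\emph{Step 2: transplanting into $\eta^{-1}\Omega$ (the main obstacle).} The naive translate $u(\cdot-\vec{x}_0/\eta)$ does not work, because the fast coefficient $A(\cdot,\textbf{P}\vec{y})$ is not translation invariant: it becomes $A(\vec{x}_0,\textbf{P}\vec{y}+\textbf{P}\vec{x}_0/\eta)$. I would handle this with the quasiperiodic recurrence given by \eqref{criterion}. By Kronecker/Weyl equidistribution, $\{\textbf{P}\vec{z}\bmod\Z^m:\vec{z}\in\R^n\}$ is dense in the torus. Hence for each $\eta$ there is a shift $\vec{z}_\eta$ with $|\eta\vec{z}_\eta-\vec{x}_0|\le\delta$ and $\textbf{P}\vec{z}_\eta\in\Z^m+B_\delta$. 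This is possible because the set of $\vec{z}$ with $\textbf{P}\vec{z}$ $\delta$-close to $\Z^m$ is relatively dense (it has bounded gaps $L(\delta)$), and $\eta^{-1}\delta\gg L(\delta)$. Set $w_\eta(\vec{y})=u(\vec{y}-\vec{z}_\eta)$. Its support lies in $B_R(\vec{z}_\eta)\subset\eta^{-1}\Omega$ for small $\eta$, since $\vec{x}_0$ is interior, and $\lVert w_\eta\rVert=1$.

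\emph{Step 3: the error estimate.} For a test function $v\in H^1_0$ we write
\[
\langle (T_\eta-\lambda)w_\eta,v\rangle=\langle (T_{\vec{x}_0}-\lambda)u,v(\cdot+\vec{z}_\eta)\rangle+\int_{B_R}\bigl[A(\eta(\vec{y}+\vec{z}_\eta),\textbf{P}\vec{y}+\textbf{P}\vec{z}_\eta)-A(\vec{x}_0,\textbf{P}\vec{y})\bigr]\nabla u\cdot\overline{\nabla v(\cdot+\vec{z}_\eta)}.
\]
By periodicity in the second variable and Hölder continuity, the bracket is bounded by $[A]_{\mathcal{C}^{0,\alpha}}\bigl((\eta R+\delta)^\alpha+\delta^\alpha\bigr)$. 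The total residual is therefore at most $2\epsilon+C(\eta R+\delta)^\alpha\lVert\nabla u\rVert$. We first choose $\delta$ small, depending on $\epsilon$ and $u$, and then $\eta$ small. Lemma~\ref{lem: spectral_equivalence} then yields $d(\lambda,\sigma(T_\eta))\le C(1+\lambda)\epsilon'$ for all small $\eta$, and letting $\epsilon\to0$ proves the claim.
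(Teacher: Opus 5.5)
Your proof is correct and follows essentially the same route as the paper. Both arguments reduce to interior $\vec{x}_0$ by continuity, take a compactly supported pseudo-eigenfunction of $T_{\vec{x}_0}$, translate it to an approximate period of $\vec{z}\mapsto A(\vec{x}_0,\textbf{P}\vec{z})$ near $\eta^{-1}\vec{x}_0$ (these are relatively dense), split the coefficient error into a H\"older term in the slow variable and an almost-period term, and conclude with Lemma~\ref{lem: spectral_equivalence}. The only difference is cosmetic: you build the almost periods explicitly as shifts with $\textbf{P}\vec{z}_\eta$ close to $\Z^m$, using periodicity and H\"older continuity in $\vec{y}$, whereas the paper cites the Bohr almost periodicity of $A(\vec{x}_0,\textbf{P}\,\cdot)$ directly.
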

\begin{proof}
Let $\vec{x_0}\in\overline{\Omega}$ and $\lambda_{\vec{x}_0}\in\sigma(T_{\vec{x}_0})$. Because $A$ is continuous in the first variable, we may assume without loss of generality that $\vec{x_0}$ lies in the interior of $\Omega$. Then for any $\epsilon>0$ there exist $\epsilon$-pseudo-eigenfunctions $u_\epsilon$ supported in $B_R(\vec{0})$ for some $R>0$, satisfying $\lVert T_{\vec{x}_0}u_\epsilon - \lambda_{\vec{x}_0}u_\epsilon\rVert_{H^{-1}} < \epsilon\lVert u_\epsilon\rVert$.

Since the map $\vec{z}\mapsto A(\vec{x}_0, \textbf{P}\vec{z})$ is almost periodic on $\R^n$ (cf.\ \cite{shubin1978almost}), for any $\epsilon'>0$ the set of $\epsilon'$-almost periods is relatively dense: there exists $L=L(\epsilon')>0$ such that every ball of radius $L$ in $\R^n$ contains an $\epsilon'$-almost period. In particular, we may choose $\vec{s}_\eta\in\R^n$ with $\lvert\vec{s}_\eta - \eta^{-1}\vec{x}_0\rvert \leq L$ such that
\begin{equation}
    \sup_{\vec{z}\in\R^n}\lvert A(\vec{x}_0, \textbf{P}\vec{z} + \textbf{P}\vec{s}_\eta) - A(\vec{x}_0, \textbf{P}\vec{z})\rvert < \epsilon'.
\end{equation}
Define $v_\eta(\vec{y}) = u_\epsilon(\vec{y} - \vec{s}_\eta)$, which is supported in $B_R(\vec{s}_\eta)\subset\eta^{-1}\Omega$ for $\eta$ sufficiently small (since $\lvert\eta\vec{s}_\eta - \vec{x}_0\rvert \leq \eta L\to 0$ and $\vec{x}_0\in\Omega$).
On $\operatorname{supp}(v_\eta)$, substituting $\vec{z}=\vec{y}-\vec{s}_\eta$, we write
    \begin{equation}
       \begin{aligned}
            \langle T_{\eta}v_\eta,v \rangle_{H^{-1},H^{1}}=\int_{B_R(\vec{0})}A\left( \eta\vec{z} + \eta\vec{s}_\eta, \textbf{P}\vec{z} + \textbf{P}\vec{s}_\eta\right)\nabla u_\varepsilon(\vec{z}) \cdot \nabla \overline{v}(\vec{z} + \vec{s}_\eta) dz .
       \end{aligned}
    \end{equation}
    Comparing this with $T_{\vec{x}_0}v_\eta(\vec{z}) $, the difference between the two coefficients can be decomposed as
    \begin{equation}
        \begin{aligned}
            &A(\eta\vec{z}+\eta\vec{s}_\eta, \textbf{P}\vec{z}+\textbf{P}\vec{s}_\eta) - A(\vec{x}_0, \textbf{P}\vec{z})\\
            &\quad= \underbrace{\bigl(A(\eta\vec{z}+\eta\vec{s}_\eta, \textbf{P}\vec{z}+\textbf{P}\vec{s}_\eta) - A(\vec{x}_0, \textbf{P}\vec{z}+\textbf{P}\vec{s}_\eta)\bigr)}_{=:\,I} + \underbrace{\bigl(A(\vec{x}_0, \textbf{P}\vec{z}+\textbf{P}\vec{s}_\eta) - A(\vec{x}_0, \textbf{P}\vec{z})\bigr)}_{=:\,II}.
        \end{aligned}
    \end{equation}
    For $\vec{z}\in B_R(\vec{0})$, the first argument satisfies $\lvert \eta\vec{z}+\eta\vec{s}_\eta - \vec{x}_0\rvert \leq \eta R + \eta L$, so by Assumption~\ref{ass:A}(iv),
    \begin{equation}
        \lVert I\rVert_{L^\infty} \leq[A]_{\mathcal{C}^{0,\alpha}}\eta^\alpha(R+L)^\alpha.
    \end{equation}
    The term $II$ is bounded by $\epsilon'$ by the choice of $\vec{s}_\eta$.

    Using the fact that $u_\epsilon$ is an $\epsilon$-pseudo-eigenfunction,  we therefore obtain 
    \begin{equation}
        \lvert \langle (T_{\eta}-\lambda )v_\eta,v \rangle_{H^{-1},H^{1}}\rvert \leq \left(\varepsilon + \varepsilon' + [A]_{\mathcal{C}^{0,\alpha}}\eta^\alpha(R+L)^\alpha\right)\lVert u_\varepsilon\rVert_{H^1}\lVert v\rVert_{H^1}.
    \end{equation}
    Finally, using ellipticity, one can show the following upper bound on 
    \begin{equation}
        \lVert u_\epsilon\rVert_{H^1}^2\leq (1 + (\lambda + \epsilon)(\gamma-\epsilon)^{-1})\lVert u_\epsilon \rVert^2,
    \end{equation}
    the claim follows upon an application of Lemma \ref{lem: spectral_equivalence}.
\end{proof}
In order to show the converse inclusion, we require a rescaled distance function. Assume that $\Omega$ has at the very least Lipschitz boundary, define

\begin{equation} d_\eta(\vec{y})=\eta^{-1}\inf_{\vec{x}\in\partial\Omega}d(\vec x,\eta\vec{y}),\quad \vec{y}\in \eta^{-1}\Omega.
\end{equation}
Then, in the spirit of \cite{allaire1998bloch}, we define the boundary spectrum as being composed of eigenfunctions that concentrate near to the boundary
\begin{equation}\label{eq: BoundarySpectrumDef}
    \sigma_{\text{Boundary}}:=\left\{\lambda\mid\exists\left(\lambda_\eta,u_\eta\right) \text{ solution of \eqref{eq: HighFrequencyNew} s.t.\ }\lambda_\eta\rightarrow\lambda \text{ and } \sup_\eta \lVert d_\eta u_\eta\rVert < \infty\right\}.
\end{equation}

\begin{theorem}\label{thm: SpectralLimit}
    The limit spectrum satisfies, in the Hausdorff sense,
    \begin{equation}        \lim\eta^{2}\sigma_\eta=\sigma_{\text{Bloch}}\cup\sigma_{\text{Boundary}}.
    \end{equation}
\end{theorem}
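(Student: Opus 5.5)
We prove the two inclusions separately. The inclusion $\lim\eta^2\sigma_\eta\supset\sigma_{\text{Bloch}}\cup\sigma_{\text{Boundary}}$ is almost immediate. Proposition~\ref{prop: BlochSpectrum} already gives $\sigma_{\text{Bloch}}$. For $\sigma_{\text{Boundary}}$, recall the definition \eqref{eq: BoundarySpectrumDef}: every $\lambda\in\sigma_{\text{Boundary}}$ is by construction the limit of genuine eigenvalues $\lambda_\eta\in\eta^2\sigma_\eta$. Hence it lies in the Hausdorff limit, interpreted as the set of limit points of sequences $\lambda_\eta\in\eta^2\sigma_\eta$.

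The substance is the converse inclusion. Let $\lambda_\eta\in\eta^2\sigma_\eta$ with $\lambda_\eta\to\lambda$. Let $\tilde v_\eta$ be the corresponding $L^2$-normalised eigenfunctions of \eqref{eq: HighFrequencyCritical}, so that $\lVert\nabla\tilde v_\eta\rVert^2\le\gamma^{-1}\lambda_\eta$ stays bounded. There are two cases.

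\emph{Case 1: $\sup_\eta\lVert d_\eta\tilde v_\eta\rVert<\infty$ along a subsequence.} Then $\lambda\in\sigma_{\text{Boundary}}$ directly.

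\emph{Case 2: $\lVert d_\eta\tilde v_\eta\rVert\to\infty$.} We must show $\lambda\in\sigma_{\text{Bloch}}$. Fix a large $R$ and a partition of unity $\{\psi_j\}$ on $\R^n$ subordinate to balls $B_{2R}(\vec z_j)$ with $\vec z_j\in R\Z^n$, normalised so that $\sum_j\psi_j^2=1$ and $\lvert\nabla\psi_j\rvert\le C/R$ (IMS localisation). Split the index set into interior cells, where $d_\eta\ge M$ on the support, and boundary cells. The first step is to show that the boundary cells carry little mass. Case 2 alone does not give this, because a large weighted norm does not force small mass near the boundary. So we choose the splitting more carefully. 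Since $\lVert d_\eta\tilde v_\eta\rVert\to\infty$ along a subsequence, either a fixed fraction $\delta>0$ of the mass lies in $\{d_\eta\ge M_\eta\}$ with $M_\eta\to\infty$, or we can extract boundary concentration. In the latter case, truncate with a cutoff $\chi$ at distance $M$ and note that $\chi\tilde v_\eta$ is itself a pseudo-eigenfunction carrying mass $\ge\delta$. It is therefore cleaner to redefine the dichotomy: either $\liminf$ of the mass in $\{d_\eta\ge M\}$ is positive for every $M$, giving Case 2, or it vanishes for some $M$, in which case $\lVert d_\eta\tilde v_\eta\rVert\le M$ and we are in Case 1.

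In Case 2, the IMS formula gives
\[
\sum_j\Big(\langle (T_\eta-\lambda_\eta)\psi_j\tilde v_\eta,\psi_j\tilde v_\eta\rangle\Big)=\sum_j\langle A\nabla\psi_j,\nabla\psi_j\rangle\lvert\tilde v_\eta\rvert^2=O(R^{-2}).
\]
More useful is the $H^{-1}$ estimate $\lVert(T_\eta-\lambda_\eta)(\psi_j\tilde v_\eta)\rVert_{H^{-1}}\le C R^{-1}\lVert\tilde v_\eta\rVert_{L^2(B_{2R}(\vec z_j))}+\dots$, which follows from the commutator $[T_\eta,\psi_j]$. Summing over the interior cells and using bounded overlap, we pick by pigeonhole an interior cell $j$ with
\[
\lVert(T_\eta-\lambda_\eta)(\psi_j\tilde v_\eta)\rVert_{H^{-1}}\le C R^{-1}\lVert\psi_j\tilde v_\eta\rVert_{L^2}+\text{(small)}.
\]
Here the controlled remainder is handled by choosing $\lVert\psi_j\tilde v_\eta\rVert\gtrsim\lVert\tilde v_\eta\rVert_{L^2(B_{2R}(\vec z_j))}$ via a standard averaging over translates of the partition.

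On this cell we freeze the coefficient, replacing $A(\eta\vec y,\textbf{P}\vec y)$ by $A(\eta\vec z_j,\textbf{P}\vec y)$. By Assumption~\ref{ass:A}(iv) the error is $[A]_{\mathcal C^{0,\alpha}}(2R\eta)^\alpha$ in $L^\infty$. Hence $\psi_j\tilde v_\eta$, extended by zero to $\R^n$, is a $(CR^{-1}+C(R\eta)^\alpha)$-pseudo-eigenfunction of $T_{\eta\vec z_j}$, with the relative $H^1$ norm controlled as in the proof of Proposition~\ref{prop: BlochSpectrum}. Lemma~\ref{lem: spectral_equivalence} then gives $d(\lambda_\eta,\sigma(T_{\eta\vec z_j}))\le C(1+\lambda)(R^{-1}+(R\eta)^\alpha)$. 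Since $\eta\vec z_j\in\bar\Omega$ is compact, we pass to a subsequence $\eta\vec z_j\to\vec x_0$.

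The main obstacle is the continuity of $\vec x\mapsto\sigma(T_{\vec x})$, which we use in the Hausdorff sense. It follows from norm-resolvent continuity: $\lVert(T_{\vec x}+1)^{-1}-(T_{\vec x'}+1)^{-1}\rVert\le C\lVert A(\vec x,\cdot)-A(\vec x',\cdot)\rVert_{L^\infty}$, via the second resolvent identity in $H^{-1}\to H^1$. This yields $d(\lambda,\sigma(T_{\vec x_0}))\le C(1+\lambda)R^{-1}$. Letting $\eta\to0$ and then $R\to\infty$, closedness of the spectrum gives $\lambda\in\sigma(T_{\vec x_0})\subset\sigma_{\text{Bloch}}$. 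Finally, the Hausdorff statement also needs uniformity on compact $\lambda$-ranges, which follows from the same estimates since all constants depend only on $\lambda$, $\gamma$ and $\lVert A\rVert_{\mathcal C^{0,\alpha}}$.
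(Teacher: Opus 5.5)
Your $\supset$ inclusion and Case~1 agree with the paper. Case~2, however, is not established. Your repaired dichotomy rests on a false implication. If the mass of $\tilde v_\eta$ in $\{d_\eta\ge M\}$ tends to zero, it does \emph{not} follow that $\lVert d_\eta\tilde v_\eta\rVert\le M$, because $\int_{\{d_\eta\ge M\}}d_\eta^2\lvert\tilde v_\eta\rvert^2$ can diverge while the mass there vanishes. Indeed $d_\eta$ ranges up to order $\eta^{-1}$, and mass $\eta$ placed at distance of order $\eta^{-1}$ already gives a weighted norm of order $\eta^{-1/2}$.

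Eigenfunctions that concentrate in the boundary layer but have a divergent weighted norm therefore fall into neither of your cases. You cannot conclude $\lambda\in\sigma_{\text{Boundary}}$, since the definition requires genuine eigenfunctions with bounded weighted norm; a truncation $\chi\tilde v_\eta$ does not qualify. Your bulk argument does not apply either.

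Moreover, even in your Case~2 the pigeonhole does not give an $O(R^{-1})$ relative residual. Summing the commutator errors over cells of radius $2R$ lying in $\{d_\eta\ge M\}$ and using Caccioppoli, you only obtain a relative residual of order $R^{-1}\bigl(\lVert\tilde v_\eta\rVert_{L^2(\{d_\eta\ge M-1\})}/\lVert\tilde v_\eta\rVert_{L^2(\{d_\eta\ge M+4R\})}\bigr)$. Positivity of the tail mass for every $M$ gives no control of this ratio; tails decaying like $e^{-M}$ are a counterexample.

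The missing idea is to use $d_\eta$ itself as the multiplier, as in Lemma~\ref{lem: PseudoeigenvectorsGeneral}: put $w_\eta=d_\eta\tilde v_\eta/\lVert d_\eta\tilde v_\eta\rVert$. Since $d_\eta$ vanishes on $\partial(\eta^{-1}\Omega)$, we have $d_\eta\bar v\in H^1_0(\eta^{-1}\Omega)$ for every $v\in H^1(\R^n)$. So the eigenvalue equation may be tested against it, and $w_\eta$ is a \emph{whole-space} pseudo-eigenfunction. Its $H^{-1}(\R^n)$-residual is a commutator bounded by $C\lVert A\rVert_{L^\infty}(1+\lambda)^{1/2}\lVert d_\eta\tilde v_\eta\rVert^{-1}\to0$, using only $\lvert\nabla d_\eta\rvert\le1$ and the $H^1$ bound on $\tilde v_\eta$. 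This uses exactly the hypothesis $\lVert d_\eta\tilde v_\eta\rVert\to\infty$ of your original Case~2. It needs no information about where the mass sits, and it removes any need to separate interior from boundary cells.

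After this step the rest of your argument goes through:
\begin{itemize}
\item the localisation, now of an $H^{-1}$ residual, or, as in Lemma~\ref{lem: FrozenCoefficients}, via a cube of maximal mass at the mesoscopic scale $\beta_\eta/\eta$;
\item the freezing of the macroscopic variable;
\item the norm-resolvent continuity of $\vec x\mapsto\sigma(T_{\vec x})$.
\end{itemize}
The paper freezes directly at the limit point $\vec x_0$. Your continuity route also works, but it needs one more compactness step, since your limit point depends on $R$.
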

The proof idea is similar to the original work by Allaire and Conca \cite{allaire1998bloch}. However, rescaling allows us to work with approximate eigenfunctions instead of distributions. Since the operators are self-adjoint, we can use the language of pseudo-spectra to draw conclusions about the limiting behaviour.
\begin{lemma}\label{lem: PseudoeigenvectorsGeneral}
    Let $(\lambda_\eta, u_\eta)$ be a sequence of eigenvalues and unit eigenvectors for $T_\eta$. Assume that $\lambda_\eta\rightarrow\lambda$ and that $\lambda\notin \sigma_{\text{Boundary}}$. Then there exists a sequence of pseudo-eigenvectors $v_\eta$ such that the following hold
    \begin{itemize}
        \item $\operatorname{supp}(v_\eta)\subset\eta^{-1}\bar{\Omega}$,
        \item $v_\eta\in H^{1}$ is a weak solution in $\mathbb{R}^n$ of 
         \begin{equation} 
            \left\{
                \begin{aligned}
                    - \mathrm{div} \; \left( A\left( \eta \vec{y}, \textbf{P} \vec{y} \right) \nabla v_\eta({\vec y}) \right )& = \lambda v_\eta(\vec{y})+r_\eta \;, \qquad \vec{y} \in \eta^{-1}\bar{\Omega} \\
                    v_\eta(\vec{y})& = 0,\qquad \vec{y}\in\partial\eta^{-1}\Omega,
                \end{aligned}
            \right.
        \end{equation}
        where $r_\eta\in H^{-1}(\R^n)$ is an error term with norm converging to $0$.
    \end{itemize}
\end{lemma}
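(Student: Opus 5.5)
The idea is to cut the eigenfunctions $u_\eta$ off near $\partial(\eta^{-1}\Omega)$ and to use the hypothesis $\lambda\notin\sigma_{\text{Boundary}}$ to show that what survives the cut-off still carries a non-negligible part of the $L^2$ mass. (Simply taking $v_\eta=u_\eta$ and $r_\eta=(\lambda_\eta-\lambda)u_\eta$ satisfies the statement literally. The content worth proving is a $v_\eta$, normalised in $L^2$, that is localised away from the boundary and has small residual, so that it can later be compared with the frozen operators $T_{\vec x_0}$.)

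\emph{Step 1: quantify the hypothesis.} Since $\lambda\notin\sigma_{\text{Boundary}}$, no subsequence of $(\lambda_\eta,u_\eta)$ can have $\lVert d_\eta u_\eta\rVert$ bounded. Otherwise that subsequence would witness $\lambda\in\sigma_{\text{Boundary}}$ through \eqref{eq: BoundarySpectrumDef}. Hence $\lVert d_\eta u_\eta\rVert\to\infty$ along the whole sequence. We also record the uniform energy bound obtained by testing the eigen-equation with $u_\eta$ and using Assumption~\ref{ass:A}(iii):
\begin{equation*}
\gamma\lVert\nabla u_\eta\rVert^2\le\lambda_\eta\le C .
\end{equation*}

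\emph{Step 2: cut-off and commutator.} For $R>0$ let $\chi_R(\vec y)=\chi(d_\eta(\vec y)/R)$, where $\chi\in\mathcal C^\infty$, $\chi=0$ on $[0,1]$ and $\chi=1$ on $[2,\infty)$. Since $d_\eta$ is $1$-Lipschitz, $\lVert\nabla\chi_R\rVert_\infty\le C/R$. Set $w_\eta=\chi_R u_\eta$, which is supported in $\eta^{-1}\bar\Omega$ and vanishes near the boundary. A direct computation gives
\begin{equation*}
-\mathrm{div}(A\nabla w_\eta)-\lambda w_\eta=(\lambda_\eta-\lambda)w_\eta-A\nabla u_\eta\cdot\nabla\chi_R-\mathrm{div}(u_\eta A\nabla\chi_R).
\end{equation*}
Denote by $S_R=\{R<d_\eta<2R\}$ the annulus on which $\nabla\chi_R$ is supported. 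The divergence term is bounded in $H^{-1}$ by $CR^{-1}\lVert u_\eta\rVert_{L^2(S_R)}$, and the middle term by $CR^{-1}\lVert\nabla u_\eta\rVert_{L^2(S_R)}$. A Caccioppoli estimate on a slightly enlarged annulus $\tilde S_R$ gives $\lVert\nabla u_\eta\rVert_{L^2(S_R)}\le C(R^{-1}+1+\lambda_\eta)^{1/2}\lVert u_\eta\rVert_{L^2(\tilde S_R)}$. Altogether,
\begin{equation*}
\lVert r_\eta\rVert_{H^{-1}}\lesssim |\lambda_\eta-\lambda|+R^{-1}\lVert u_\eta\rVert_{L^2(\tilde S_R)} .
\end{equation*}

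\emph{Step 3: choice of $R=R_\eta$ (main obstacle).} We take $v_\eta=w_\eta/\lVert w_\eta\rVert$, so the residual must be small \emph{relative} to $\lVert\chi_R u_\eta\rVert\ge\lVert u_\eta\rVert_{L^2(d_\eta>2R)}$. Divergence of $\lVert d_\eta u_\eta\rVert$ alone does not prevent almost all the mass from sitting near the boundary. We therefore argue by dyadic pigeonholing. Write $m_k=\lVert u_\eta\rVert^2_{L^2(2^k<d_\eta<2^{k+1})}$, so that $\sum_k m_k=1$ and $\sum_k 4^k m_k\gtrsim\lVert d_\eta u_\eta\rVert^2\to\infty$. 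Suppose, for contradiction, that for some $\delta>0$ and all large $k$ we had $m_k+m_{k+1}\ge\delta\, 4^{k}M_{k+2}$, where $M_j=\sum_{i\ge j}m_i$ is the tail mass. Summing these inequalities would force the tails $M_j$ to decay fast enough that $\sum_k 4^km_k$ stays bounded, uniformly in $\eta$. That contradicts Step 1. Hence, for each $\eta$, we can select $R_\eta=2^{k_\eta}$ with
\begin{equation*}
R_\eta^{-1}\lVert u_\eta\rVert_{L^2(\tilde S_{R_\eta})}=o\bigl(\lVert u_\eta\rVert_{L^2(d_\eta>2R_\eta)}\bigr).
\end{equation*}
Getting this uniformly in $\eta$, and handling a possible dependence of the Caccioppoli constant on $R$, is the delicate bookkeeping step.

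\emph{Step 4: conclusion.} With $R=R_\eta$ from Step 3, the normalised $v_\eta$ solves the equation in the lemma with $\lVert r_\eta\rVert_{H^{-1}}\to0$. It is supported in $\eta^{-1}\bar\Omega$ and has zero trace on $\partial\eta^{-1}\Omega$. If we also choose $R_\eta\to\infty$, it is localised at rescaled distance at least $R_\eta$ from the boundary, which is the form needed for the subsequent comparison with $T_{\vec x_0}$ via Lemma~\ref{lem: spectral_equivalence}.
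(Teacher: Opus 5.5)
Your proposal is correct, but it takes a different and longer route than the paper. The paper never cuts off: it takes $v_\eta=d_\eta u_\eta/\lVert d_\eta u_\eta\rVert$, using the rescaled distance function itself as the multiplier. Since $\lvert\nabla d_\eta\rvert\le 1$ a.e.\ and $\gamma\lVert\nabla u_\eta\rVert^2\le\lambda_\eta$, the commutator terms $A u_\eta\nabla d_\eta\cdot\nabla\bar v-A\nabla u_\eta\cdot\nabla d_\eta\,\bar v$ are bounded in $H^{-1}(\R^n)$ by $C(1+\lambda)^{1/2}$ in absolute terms. The normalising factor is exactly $\lVert d_\eta u_\eta\rVert$, which the hypothesis $\lambda\notin\sigma_{\text{Boundary}}$ sends to infinity. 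So the residual is $\lvert\lambda_\eta-\lambda\rvert+O(\lVert d_\eta u_\eta\rVert^{-1})$, with no scale to choose.

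Your cut-off $\chi(d_\eta/R)$ instead gives a commutator of size $R^{-1}\lVert u_\eta\rVert_{L^2(\tilde S_R)}$, which must be divided by $\lVert\chi_R u_\eta\rVert\le 1$. That is what forces the dyadic pigeonholing. Step 3 does go through:
\begin{itemize}
\item Fix $\delta>0$ and a threshold $K$. If the good inequality fails for all $k\ge K$, then $M_{k+2}\le\delta^{-1}4^{-k}M_k$.
\item This gives super-exponential decay of the tails, hence $\sum_k4^km_k\le C(\delta,K)$ uniformly in $\eta$, which contradicts Step 1 for $\eta$ small.
\item A diagonal choice $\delta_\eta\to0$, $K_\eta\to\infty$ then produces $R_\eta\to\infty$ with the required $o(\cdot)$.
\end{itemize}
The Caccioppoli constant is uniform for $R\ge1$ because $u_\eta\in H^1_0$, so there is no hidden $R$-dependence. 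What your route buys is stronger localisation: $v_\eta\equiv0$ on $\{d_\eta<R_\eta\}$ with $R_\eta\to\infty$. The paper's $v_\eta$ is only weighted towards the bulk, which is all that is used afterwards, and it costs one line.

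Your parenthetical remark needs correcting. The lemma asks for a weak solution in $\R^n$ with $r_\eta\in H^{-1}(\R^n)$, so the identity must hold against every $v\in H^1(\R^n)$. This is exactly what Lemma~\ref{lem: FrozenCoefficients} uses, since it tests against $\varphi_\eta v$ for cubes that may touch the boundary. The zero extension of $u_\eta$ does not satisfy this with small residual. It picks up the flux of $A\nabla u_\eta$ across $\partial(\eta^{-1}\Omega)$, which can be of order one when a non-negligible fraction of the mass sits near the boundary. As you note yourself, $\lVert d_\eta u_\eta\rVert\to\infty$ does not exclude that. The multiplier ($d_\eta$ in the paper, $\chi_R$ in yours) matters because it vanishes on $\partial(\eta^{-1}\Omega)$. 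Then $d_\eta\bar v$ (resp.\ $\chi_R\bar v$) lies in $H^1_0(\eta^{-1}\Omega)$ for every $v\in H^1(\R^n)$, and the eigen-equation can be applied to it. The nontrivial content of the lemma is finding such a multiplier while keeping the normalised commutator small, and that is where the hypothesis enters. Your construction has this property, so your proof stands.
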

\begin{proof}
    Since $\lambda\in\lim_{\eta\rightarrow0}\eta^{2}\sigma\setminus\sigma_{\text{Boundary}}$, there exists a sequence $\eta$ such that 
    \begin{equation}
         \lambda_\eta\rightarrow\lambda \quad \text{and} \quad \lim_\eta\lVert d_\eta u_\eta\rVert = \infty.
    \end{equation}
    Consider the sequence of functions
    \begin{equation}
        v_\eta = d_\eta u_\eta.
    \end{equation}
    We have
    \begin{equation}
        \begin{aligned}
            \langle T_{\eta}v_\eta,v \rangle_{H^{-1},H^{1}}&=\int_{\eta^{-1}\Omega}A\left( \eta\vec{y}, \textbf{P}\vec{y} \right)\nabla v_\eta(\vec{y}) \cdot \nabla \overline{v}(\vec{y}) dy\\
            &=\int_{\eta^{-1}\Omega}A\left( \eta\vec{y}, \textbf{P}\vec{y} \right)\left((\nabla d_\eta)(\vec{y}) u_\eta(\vec{y}) + d_\eta(\vec{y})(\nabla u_\eta)(\vec{y})\right)\cdot \nabla \overline{v}(\vec{y}) dy\\
            &= \int_{\eta^{-1}\Omega}A\left( \eta\vec{y}, \textbf{P}\vec{y} \right)\nabla u_\eta(\vec{y}) \cdot \nabla (d_\eta\overline{v}(\vec{y}))\\
             & + A\left( \eta\vec{y}, \textbf{P}\vec{y} \right)\left ((\nabla d_\eta)(\vec{y}) u_\eta(\vec{y})\cdot\nabla \overline{v}(\vec{y}) - \nabla u_\eta(\vec{y})\cdot(\overline{v}(\vec{y})(\nabla d_\eta)(\vec{y})))\right)dy\\
             &=\int_{\eta^{-1}\Omega}\lambda_\eta d_\eta u_\eta(\vec{y})\overline{v}(\vec{y})\\ & 
             + A\left( \eta\vec{y}, \textbf{P}\vec{y} \right)\left ((\nabla d_\eta)(\vec{y}) u_\eta(\vec{y})\cdot\nabla \overline{v}(\vec{y}) - \nabla u_\eta(\vec{y})\cdot(\overline{v}(\vec{y})(\nabla d_\eta)(\vec{y}))\right)dy
        \end{aligned}
    \end{equation} 
    Noting that $\lvert \nabla d_\eta\rvert <C$ for some $C>0$ only depending on $\Omega$ almost everywhere by Rademacher's Theorem, we find that upon normalising, the remainder can be bounded by
    \begin{equation}
        \lVert r_\eta \rVert_{H^{-1}}\leq C\lVert A\rVert_{L^\infty}(1 + \lambda)^{1/2}\lVert d_\eta u_\eta\rVert^{-1},
    \end{equation}
    where $C>0$ depends only on $d_\eta$.
\end{proof}
We now need to freeze the macroscopic variable. This is the content of the following lemma
\begin{lemma}\label{lem: FrozenCoefficients}
    Let $(v_\eta)$ be a sequence of pseudo-eigenvectors obtained from Lemma \ref{lem: PseudoeigenvectorsGeneral}. Then there exists a point $\vec x_0\in \Omega$ and a sequence of pseudo-eigenvectors $\tilde{v}_\eta$ such that
    \begin{itemize}
        \item $\operatorname{supp}(\tilde{v}_\eta)\subset\eta^{-1}\bar{\Omega}$,
        \item $\tilde{v}_\eta\in H^1$ is a weak solution in $\mathbb{R}^n$ of 
         \begin{equation} 
            \left\{
                \begin{aligned}
                    - \mathrm{div} \; \left( A\left( \vec{x}_0, \textbf{P} \vec{y} \right) \nabla \tilde{v}_\eta({\vec y}) \right )& = \lambda \tilde{v}_\eta(\vec{y})+\tilde{r}_\eta \;, \qquad \vec{y} \in \eta^{-1}\bar{\Omega} \\
                    \tilde{v}_\eta(\vec{y})& = 0,\qquad \vec{y}\in\partial\eta^{-1}\Omega,
                \end{aligned}
            \right.
        \end{equation}
        where $\tilde{r}_\eta\in H^{-1}(\mathbb{R}^n)$ is  an error term with norm converging to $0$ as $\eta\to0$.
    \end{itemize}
\end{lemma}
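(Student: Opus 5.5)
We will localise the pseudo-eigenvectors $v_\eta$ from Lemma~\ref{lem: PseudoeigenvectorsGeneral} on mesoscopic balls, so that on each piece the slow variable $\eta\vec y$ is essentially frozen. Fix an intermediate scale $R_\eta\to\infty$ with $\eta R_\eta\to 0$; we take $R_\eta=\eta^{-1/2}$ for definiteness. Choose a smooth partition of unity $\{\psi_j\}$ of $\R^n$ subordinate to balls $B_{R_\eta}(\vec y_j)$ with bounded overlap, independent of $\eta$ after rescaling, such that
\begin{equation*}
\sum_j\psi_j^2=1 \qquad\text{and}\qquad \lvert\nabla\psi_j\rvert\le C R_\eta^{-1}.
\end{equation*}
The pieces are $w_j:=\psi_j v_\eta$, each supported in $\eta^{-1}\bar\Omega$ and vanishing on $\partial\eta^{-1}\Omega$.

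\textbf{Step 1 (IMS-type localisation).} We compute $(T_\eta-\lambda)w_j=\psi_j r_\eta+[T_\eta,\psi_j]v_\eta$. The commutator involves only $A\nabla\psi_j v_\eta$ and $A v_\eta\nabla\psi_j$ terms. Its $H^{-1}$ norm is bounded by $C\lVert A\rVert_{L^\infty}R_\eta^{-1}\lVert v_\eta\rVert_{L^2(\operatorname{supp}\psi_j)}$, or more precisely by the $H^1$ norm on the support. Summing the squares over $j$ and using bounded overlap gives
\begin{equation*}
\sum_j\lVert (T_\eta-\lambda)w_j\rVert_{H^{-1}}^2\le C\bigl(\lVert r_\eta\rVert_{H^{-1}}^2+R_\eta^{-2}\lVert v_\eta\rVert_{H^1}^2\bigr),
\end{equation*}
while $\sum_j\lVert w_j\rVert^2=\lVert v_\eta\rVert^2=1$. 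The $H^1$ norm of $v_\eta$ is controlled by its $L^2$ norm via ellipticity and the equation, exactly as in the proof of Proposition~\ref{prop: BlochSpectrum}.

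\textbf{Step 2 (Freezing).} On $\operatorname{supp}w_j$ we have $\lvert\eta\vec y-\eta\vec y_j\rvert\le\eta R_\eta$. By Assumption~\ref{ass:A}(iv), replacing $A(\eta\vec y,\textbf{P}\vec y)$ by $A(\eta\vec y_j,\textbf{P}\vec y)$ changes $T_\eta w_j$ in $H^{-1}$ by at most $[A]_{\mathcal C^{0,\alpha}}(\eta R_\eta)^\alpha\lVert w_j\rVert_{H^1}$. Hence, with $\vec x_j:=\eta\vec y_j\in\bar\Omega$,
\begin{equation*}
\sum_j\lVert (T_{\vec x_j}-\lambda)w_j\rVert_{H^{-1}}^2\le \delta_\eta^2\sum_j\lVert w_j\rVert^2, \qquad \delta_\eta\to0.
\end{equation*}
Here we need $\sum_j\lVert w_j\rVert_{H^1}^2\le C$, which follows from the Step 1 bounds and bounded overlap.

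\textbf{Step 3 (Pigeonhole and choice of $\vec x_0$).} Since the weighted average of the ratios is at most $\delta_\eta^2$, there is an index $j(\eta)$ with $w_{j(\eta)}\ne0$ and $\lVert (T_{\vec x_{j}}-\lambda)w_{j}\rVert_{H^{-1}}\le\delta_\eta\lVert w_{j}\rVert$. The points $\vec x_{j(\eta)}$ lie in the compact set $\bar\Omega$, so along a subsequence $\vec x_{j(\eta)}\to\vec x_0$. Using Hölder continuity once more, we replace $A(\vec x_{j(\eta)},\cdot)$ by $A(\vec x_0,\cdot)$ at cost $[A]\lvert\vec x_{j(\eta)}-\vec x_0\rvert^\alpha\to0$. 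We then set $\tilde v_\eta:=w_{j(\eta)}/\lVert w_{j(\eta)}\rVert$.

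The statement asks for $\vec x_0\in\Omega$, not merely $\bar\Omega$. This is where the hypothesis $\lambda\notin\sigma_{\text{Boundary}}$ must enter again, and we expect it to be the main obstacle. Our first attempt is to use the weight $d_\eta$ already built into $v_\eta=d_\eta u_\eta/\lVert d_\eta u_\eta\rVert$ with $\lVert d_\eta u_\eta\rVert\to\infty$. This suggests that the mass of $v_\eta$ within rescaled distance $K$ of the boundary is $O(K/\lVert d_\eta u_\eta\rVert)\to0$. We would therefore discard pieces $w_j$ whose support meets the region $\{d_\eta<K_\eta\}$ for a suitable $K_\eta\to\infty$ and run the pigeonhole only over interior pieces.

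This handles only a fixed rescaled distance, however, and limit points on $\partial\Omega$ could still arise from mass at distance $\gg1$ but $o(\eta^{-1})$. If so, we would fall back on $\vec x_0\in\bar\Omega$. This is harmless for the eventual use in Theorem~\ref{thm: SpectralLimit}, since $\sigma_{\text{Bloch}}$ is defined as a union over $\bar\Omega$ and $\tilde v_\eta$ then serves as a pseudo-eigenvector of the whole-space operator $T_{\vec x_0}$. Finally, the support and boundary-vanishing properties of $\tilde v_\eta$ are inherited directly from $v_\eta$.
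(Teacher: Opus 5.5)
Your argument is correct, with $\vec x_0\in\bar\Omega$, and it reaches the lemma by a different localisation mechanism from the paper's.

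\textbf{How the paper localises.} The paper tiles $\eta^{-1}\bar\Omega$ by cubes of side $\beta_\eta/\eta$ and picks the single cube of maximal $L^2$ mass, which therefore carries mass at least $c\beta_\eta^{n/2}$. It cuts off with a bump and controls local gradients by Caccioppoli. It then divides the global error terms, such as $\lVert r_\eta\rVert_{H^{-1}}$ and $\lVert\nabla\varphi_\eta\rVert_{L^\infty}\lVert v_\eta\rVert$, by that lower mass bound. This forces the mesoscale to be tuned to the decay of the residual, $\beta_\eta=\max(\eta^{2/(n+3)},\lVert r_\eta\rVert_{H^{-1}}^{1/n})$.

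\textbf{What your route buys.} Your IMS-type partition with $\sum_j\psi_j^2=1$ replaces the max-mass selection by an $\ell^2$ average plus a pigeonhole on the ratio of residual to mass. As a result, no lower bound on local mass is ever needed. Any mesoscale with $R_\eta\to\infty$ and $\eta R_\eta\to0$ works, and only the global energy bound on $v_\eta$ enters, with no Caccioppoli. The rate $\lVert r_\eta\rVert_{H^{-1}}+R_\eta^{-1}+(\eta R_\eta)^\alpha$ also has exponents independent of $n$.

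\textbf{Step to justify.} The price is the almost-orthogonality $\sum_j\lVert\psi_j r_\eta\rVert_{H^{-1}}^2\le C\lVert r_\eta\rVert_{H^{-1}}^2$, which you use without comment. It holds, but it deserves a line: write $r_\eta=(1-\Delta)g$ with $\lVert g\rVert_{H^1}=\lVert r_\eta\rVert_{H^{-1}}$, expand $\psi_j(1-\Delta)g=\psi_j g+\nabla\psi_j\cdot\nabla g-\operatorname{div}(\psi_j\nabla g)$, and use bounded overlap.

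\textbf{Two smaller points.} First, balls meeting $\partial(\eta^{-1}\Omega)$ may have centres outside $\eta^{-1}\bar\Omega$, where $A(\eta\vec y_j,\cdot)$ is undefined. Choose $\vec x_j\in\bar\Omega\cap\eta\operatorname{supp}\psi_j$ instead. Second, the final replacement of $\vec x_{j(\eta)}$ by $\vec x_0$ costs $[A]_{\mathcal{C}^{0,\alpha}}\lvert\vec x_{j(\eta)}-\vec x_0\rvert^\alpha\lVert\nabla\tilde v_\eta\rVert$. You should therefore record that $\lVert\nabla\tilde v_\eta\rVert$ stays bounded, which follows by testing the pseudo-eigen relation against $\tilde v_\eta$ itself.

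\textbf{On $\vec x_0\in\Omega$.} The paper's own proof also only produces $\vec x_0\in\bar\Omega$, as a subsequential limit of cube centres, exactly like your fallback. That is all Theorem~\ref{thm: SpectralLimit} uses, since $\sigma_{\text{Bloch}}$ is a union over $\bar\Omega$. Your attempted $d_\eta$-weight argument for an interior point is therefore unnecessary.
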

\begin{proof}
    We cover the domain $\bar{\Omega}\eta^{-1}$ by a set of non-overlapping cubes $(Q^\eta_i)_{1\leq i\leq N_\eta}$ of size $[-\frac{\beta_\eta}{2\eta}\,\frac{\beta_\eta}{2\eta}]^n$, where $N_\eta = \mathcal{O}(\beta_\eta^{-n})$. For every $\eta$, denote by $i(\eta)$ the index of the cube where the $L^2$ norm of $v_\eta$ restricted to $Q^\eta_{i(\eta)}$ is maximal. Further, let $\vec{y}^\eta_i$ be the centre of the cube $Q^\eta_i$ and let $\vec{x}^\eta_i = \eta\vec{y}^\eta_i$. We note first that
    \begin{equation}\label{eq: FrozenCoefficients1}
        \lVert v_\eta\mid_{Q^\eta_{i(\eta)}}\rVert > c\beta_\eta^{n/2},
    \end{equation}
    for some $c>0$ small enough. Moreover, since $\vec{x}^\eta_{i(\eta)}\in \Omega$, as $\eta\to0$ there exists a convergent subsequence with some limit $\vec{x}_0\in \bar{\Omega}$. Let $\varphi$ be a smooth bump function satisfying
    \begin{equation}
        \begin{cases}
            0 \leq \varphi \leq 1,\\
            \operatorname{supp}(\varphi)\subset[-1,1]^n,\\
            \varphi=1 \text{ in } [-1/2,1/2]^n.
        \end{cases}
    \end{equation}
    Denote $\varphi_\eta(\cdot): = \varphi\left(\frac{\cdot-\vec{y}^\eta_{i(\eta)}}{\eta^{-1}\beta_\eta}\right)$ and further define
    \begin{equation}
        \tilde{v}_\eta := \frac{\varphi_\eta v_\eta}{\lVert \varphi_\eta v_\eta\rVert}.
    \end{equation}
    It remains to check whether $\tilde{v}_\eta$ makes a good sequence of pseudo-eigenvectors for the fixed-variable operator. First, we bound the error occurred by freezing the coefficients: 
    \begin{equation}\label{eq: FrozenCoefficients2}
        \begin{aligned}
            \langle T_{\vec{x}_0}\tilde{v}_\eta,v \rangle_{H^{-1},H^{1}}&=\langle T_{\eta}\tilde{v}_\eta,v \rangle_{H^{-1},H^{1}}\\
            &+ \underbrace{\lVert \varphi_\eta v_\eta\rVert^{-1}\int_{\eta^{-1}\Omega}\left(A\left( \vec{x}_0, \textbf{P}\vec{y}\right)-A\left( \eta\vec{y}, \textbf{P}\vec{y}\right)\right)\nabla  (\varphi_\eta(\vec{y})v_\eta(\vec{y})) \cdot \nabla \overline{v}(\vec{y})dy}_{A_\eta}.    
        \end{aligned}
    \end{equation}
    Thus, we require an upper bound on $A_\eta$
    \begin{equation}\label{eq: FrozenCoefficients3}
        \begin{aligned}
            A_\eta&:=\lVert \varphi_\eta v_\eta\rVert^{-1}\int_{\eta^{-1}\Omega}\left(A\left( \vec{x}_0, \textbf{P}\vec{y}\right)-A\left( \eta\vec{y}, \textbf{P}\vec{y}\right)\right)\nabla  (\varphi_\eta(\vec{y})v_\eta(\vec{y})) \cdot \nabla \overline{v}(\vec{y})dy\\
            &\leq \lVert \varphi_\eta v_\eta\rVert^{-1}[A]_{\mathcal{C}^{0,\alpha}}(\beta_\eta + \lvert \vec{x}_{i(\eta)}^\eta-\vec{x}_0\rvert)^\alpha(\lVert \nabla \varphi_\eta \rVert_{L^\infty}\lVert v_\eta\rVert + \lVert \varphi_\eta \nabla v_\eta\rVert)\lVert\nabla  v\rVert.
        \end{aligned}
    \end{equation}
    For the first term, we note simply that $\varphi$ can be chosen such that $\lvert \nabla \varphi_\eta\rvert \leq C\eta\beta_\eta^{-1}$. To bound the second term, we make use of Caccioppoli's inequality to obtain 
    \begin{equation}
        \lVert \varphi_\eta \nabla v_\eta \rVert^2 \leq C(1 + \lvert \lambda\rvert) \lVert \varphi_\eta v_\eta\rVert^2 + C\lVert r_\eta\rVert_{H^{-1}}^2.   
    \end{equation}
    Applying these bounds to Equation \eqref{eq: FrozenCoefficients3} and making use of \eqref{eq: FrozenCoefficients1}, we obtain 

    \begin{equation}
        A_\eta \leq C[A]_{\mathcal{C}^{0,\alpha}}(\beta_\eta + \lvert \vec{x}_{i(\eta)}^\eta-\vec{x}_0\rvert)^\alpha(\eta\beta_\eta^{-(n+2)/2} + (1 + \lvert \lambda\rvert)^{1/2}+ \beta_\eta^{-n/2}\lVert r_\eta\rVert_{H^{-1}})\lVert\nabla  v\rVert.
    \end{equation}
    In a next step, we examine $\langle T_{\eta}\tilde{v}_\eta,v \rangle_{H^{-1},H^{1}}$
    \begin{equation}
        \begin{aligned}
            &\langle T_{\eta}\tilde{v}_\eta,v \rangle_{H^{-1},H^{1}}=\lVert \varphi_\eta v_\eta\rVert^{-1}\int_{\eta^{-1}\Omega}A\left( \eta\vec{y}, \textbf{P}\vec{y}\right)\nabla  (\varphi_\eta(\vec{y})v_\eta(\vec{y})) \cdot \nabla \overline{v}(\vec{y})dy\\
            &\quad =\lVert \varphi_\eta v_\eta\rVert^{-1}\langle T_{\eta}v_\eta,
            \varphi_\eta v \rangle_{H^{-1},H^{1}}\\
            & + \underbrace{\lVert \varphi_\eta v_\eta\rVert^{-1}\int_{\eta^{-1}\Omega}A\left( \eta\vec{y}, \textbf{P}\vec{y}\right) \left((\nabla  \varphi_\eta(\vec{y}))v_\eta(\vec{y})) \cdot \nabla \overline{v}(\vec{y}) - \nabla v_\eta \cdot (\nabla \varphi_\eta)(\vec{y})\overline{v}(\vec{y})\right)d\vec{y}}_{B_\eta}.
        \end{aligned}
    \end{equation}
    Since every term in $B_\eta$ includes a factor of $\nabla \varphi_\eta$, and we can again exploit the choice that $\lvert \nabla \varphi_\eta\rvert \leq C\eta\beta_\eta^{-1}$, we have the bound
    \begin{equation}
        B_\eta\leq  C\eta\beta_\eta^{-1}\lVert \varphi_\eta v_\eta\rVert^{-1} \lVert A\rVert_{L^\infty}((1+\lvert \lambda \rvert)^{1/2}\lVert \varphi_\eta v_\eta\rVert + \lVert r_\eta\rVert_{H^{-1}})\lVert v\rVert_{H^1}.
    \end{equation}
    Putting everything together, we have
    \begin{equation}
        \langle T_{\vec{x}_0}\tilde{v}_\eta,v \rangle_{H^{-1},H^{1}} = \lambda \langle \tilde{v}_\eta,v \rangle + \lVert \varphi_\eta v_\eta\rVert^{-1}\langle r_\eta,v\rangle_{H^{-1},H^1}+ A_\eta + B_\eta.
    \end{equation}
    For $\beta_\eta = \max(\eta^{2/(n+3)}, \lVert r_\eta\rVert_{H^{-1}}^{1/n})$ the claim follows.
\end{proof}

Through Lemmas~\ref{lem: PseudoeigenvectorsGeneral} and~\ref{lem: FrozenCoefficients}
we have constructed, for each
$\lambda\in\lim_{\eta\to0}\eta^{2}\sigma\setminus\sigma_{\text{Boundary}}$, a
sequence of compactly supported pseudo-eigenvectors $\tilde v_\eta$ in $H^1(\mathbb{R}^n)$ for the
frozen-coefficient operator $T_{\vec x_0}$ at some $\vec x_0\in\Omega$,
normalised in $L^2$ and with $H^{-1}$-residual tending to $0$ as $\eta\to0$. By
Lemma~\ref{lem: spectral_equivalence}, $\lambda\in\sigma(T_{\vec x_0})\subset
\sigma_{\text{Bloch}}$; hence
$\lim_{\eta\to0}\eta^{-2}\sigma\subset\sigma_{\text{Bloch}}\cup
\sigma_{\text{Boundary}}$. Together with the reverse inclusion from
Proposition~\ref{prop: BlochSpectrum}, this establishes
Theorem~\ref{thm: SpectralLimit} in the critical scaling, modulo the
characterisation of $\sigma_{\text{Boundary}}$ obtained in
Section~\ref{sec:boundarylayer}. An example was shown in Figure~\ref{fig:eigenvalue_critical}, where we observe convergence of the bulk eigenvalues to the Bloch spectrum, interspersed with the boundary spectrum.

\subsection{Non-critical scalings}
\subsubsection{Sub-critical Scaling}
Here, we study the setting
\begin{equation*}
    \lim_{\e\rightarrow 0}a_\e\e^{-1}=0.
\end{equation*}
Let us redefine the map $T_\eta: H^1(a_{\eta}^{-1}\Omega)\rightarrow H^{-1}(a_\eta^{-1}\Omega)$ in light of this scaling as
 \begin{equation}
     u\mapsto \langle T_{\eta}u,v \rangle_{H^{-1},H^{1}}:=\int_{\eta^{-1}\Omega}A\left( a_\eta\vec{y}, \frac{a_\eta}{\eta}\textbf{P}\vec{y}\right)\nabla u(\vec{y}) \cdot \nabla \overline{v}(\vec{y}) d\vec{y}
 \end{equation}
Without loss of generality we may assume $0\in\Omega$. We claim that the sequence of operators $T_\e$ converges strongly to the operator
\begin{equation}\label{eq: SubCriticalLimit}
     T: H^1(\mathbb{R}^n)\rightarrow H^{-1}(\mathbb{R}^n), u\mapsto \langle T_{}u,v \rangle_{H^{-1},H^{1}}:=\int_{\mathbb{R}^n}A\left(\vec{0}, \vec{0}\right)\nabla u(\vec{y}) \cdot \nabla \overline{v}(\vec{y}) d\vec{y}
 \end{equation}
 The intuition behind this was already outlined in \cite{allaire1998bloch}, albeit for the non-rescaled problem. After rescaling, this can be understood as follows: Since $a_\eta \ll \eta$, the domain grows much faster than the number of fundamental cells inside the domain (this would be true when $A$ is periodic in the second variable, but the intuitive idea is the same in the quasiperiodic setting). Therefore, one would expect  the operators coefficients to be increasingly locally constant. This picture aligns well with \eqref{eq: HighFrequencyNew}.

 Let $u \in \mathcal{C}^\infty_c(\R^n)$ and $\e$ be small enough such that $\operatorname{supp}u\subset a_\e^{-1}\O$. We can then also regard $u$ as an element of $H^1_0(a_\e^{-1}\O)$ and hence it makes sense to consider $(T_\eta - T)u$. We obtain
\begin{equation}
    \begin{aligned}
    \lvert\langle (T_\eta - T)u,v\rangle_{H^{-1},H^1}\rvert &= \lvert\int_{\eta^{-1}\Omega}\left(A\left( a_\eta\vec{y}, \frac{a_\eta}{\eta}\textbf{P}\vec{y}\right)-A\left(\vec{0}, \vec{0}\right)\right)\nabla u(\vec{y}) \cdot \nabla \overline{v}(\vec{y}) d\vec{y}\rvert\\
    &\leq [A]_{\mathcal{C}^{0,\alpha}}\left(\frac{a_\eta}{\eta}\right)^\alpha C_{\operatorname{supp}u,\alpha}\lVert u\rVert_{H^1}\lVert v\rVert_{H^1}.
    \end{aligned}
\end{equation}
Since $\mathcal{C}^\infty_c(\R^n)$ is dense in $H^1(\R^n)$, we find that $T_\eta\rightarrow T$ strongly. From this another application of Lemma \ref{lem: spectral_equivalence} immediately yields
\begin{equation}\label{eq: SubCriticalSpectrum}
    \lim_{\eta\rightarrow0}\sigma (T_\eta) \supset\sigma(T).
\end{equation}
 However, since $T_\eta$ is an elliptic operator, we have $\sigma(T_\eta) \subset \R_+$ and since $T$ additionally has constant coefficients, $\sigma (T) = \R_+$. From this it follows that the inclusion in equation \eqref{eq: SubCriticalSpectrum} is actually an equality.
 
\subsubsection{Super-critical Scaling}
In the super-critical setting, we have 
\begin{equation*}
    \lim_{\e\rightarrow 0}a_\e\e^{-1}=\infty.
\end{equation*}
This time, the number of fundamental cells grows faster than the size of the domain. Thus, homogenisation should occur in the rescaled problem. Let
\begin{equation}
    T^h: H^1(\mathbb{R}^n)\rightarrow H^{-1}(\mathbb{R}^n), u\mapsto \langle T^hu,v \rangle_{H^{-1},H^{1}}:=\int_{\eta^{-1}\Omega}A^h\left(\vec{0}\right)\nabla u(\vec{y}) \cdot \nabla \overline{v}(\vec{y}) d\vec{y}
\end{equation}
where $A^h$ is the homogenised operator in the sense of equation \eqref{eq:homogenized_sigma_static}. Denote by $R_\e(z), R^h(z)$ the resolvents of $T_\e -z, T^h-z$. We will show that $R_\e(-1)f\rightarrow R^h(-1)f$ strongly for $f$ in some dense subset of $L^2(\R^n)$.

First, note that since $T^h$ is an elliptic operator with constant coefficients we have $\sigma (T^h) = \mathbb{R}_+$. Similarly, since $T_\eta$ is also elliptic, we have $\sigma(T_\eta) \subset \R_+$. Therefore, $R^h(-1), R_\eta(-1)$ are well defined. Let $\mathcal{F'}$ be a dense subset of $\mathcal{C}^\infty_c$ with respect to the $H^2$ topology and set $\mathcal{F} = \operatorname{Im}((T^h+\operatorname{Id})\mid_{\mathcal{F}'})$. Because $(T^h+\operatorname{Id}):H^2\rightarrow L^2$ is continuous, $\mathcal{F}$ is dense in the image of $T^h+\operatorname{Id}$. Moreover, since $T^h$ has constant coefficients, $R^h(-1)$ maps $L^2$ into $H^{2}$, that is, it is an isomorphism. Therefore, $\mathcal{F}$ is dense in $L^2$. It is thus sufficient to show that $R_{\eta}(-1)f\rightarrow R^h(-1)f$ for all $f \in \mathcal{F}$.

To this end, let $u_\eta \in H^1_0(a_\eta^{-1}\Omega)\subset H^1(\R^n)$ be the unique solution to 
\begin{equation}\label{eq:Supercritical1}
    \begin{cases}
        \begin{aligned}
          - \mathrm{div} \; \left( A\left( a_\eta \vec{y}, \frac{a_\eta}{\eta}\textbf{P} \vec{y} \right) \nabla u_\eta({\vec y}) \right ) + u_\eta& = f(\vec y) \;, \qquad \vec{y} \in a_\eta^{-1}\Omega\\
            u_\eta|_{\partial a_\eta^{-1}\Omega} & = 0.
        \end{aligned}
    \end{cases}
\end{equation}
Due to the ellipticity of $A$, $u_\eta$ is a bounded sequence in $H^1(\R^n)$ which follows from
\begin{equation*}
    c\lVert u_\eta\rVert_{H^1}^2\leq\langle T_\eta u_\eta,u_\eta\rangle_{H^{-1},H^1} + \lVert u_\eta \rVert^2 = \langle f, u_\eta\rangle_{H^{-1},H^1} \leq \lVert f\rVert_{H^{-1}}\lVert u\rVert_{H^1}.
\end{equation*} From Proposition~\ref{prop:grad-split} we thus, upon passing to a subsequence, may assume that there exist $(u_0,\nabla_{\bf P}u_1) \in H^1(\R^n) \times L^2(\R^n, \mathcal{L}_{\textbf{P}}(Y^m))$ such that

\begin{itemize}
    \item $u_\eta\rightharpoonup u_0$ weakly in $L^2(\R^n)$,
    \item $\nabla u_\eta \wdto \nabla u_0 + \nabla_\textbf{P}u_1$ in $L^2$.
\end{itemize}
Moreover, $(u_0,\nabla_\textbf{P}u_1)$ is the unique solution to
\begin{equation}
    \begin{cases}
        -\operatorname{div}_{\textbf{P}}\left(A(\vec 0, \vec y)(\nabla u_0(\vec x) + \nabla_\textbf{P} u_1(\vec x, \vec y))\right)= 0 \quad (\vec x,  \vec y) \in \R^n\times Y^m\\
        -\operatorname{div}\left(A^h(\vec 0)\nabla u_0(\vec x)\right) + u_0(\vec x)= f(\vec x) \quad \vec x \in \R^n.
    \end{cases}
\end{equation}
From the fact that $\mathcal{F}$ is defined as the image of $\mathcal{F}'\subset \mathcal{C}^\infty_c(\mathbb{R}^n)$ under $T^h+\operatorname{Id}$, we have that $u_0\in\mathcal{C}^\infty_c(\mathbb{R}^n)$. Further, since one can identify
\begin{equation}
  \nabla_\textbf{P}  u_1(x,y) =  -\partial_iu_{0}(x) \nabla_\textbf{P}\chi^i(y),
\end{equation}
where $\nabla_\textbf{P}\chi^i(y),$ is defined by the solution of 
\eqref{eq:local_static}, $ \nabla_\textbf{P} u_1$ is actually in $\mathcal{C}^\infty_c(\R^n)\times L^2_\sharp(Y^m)$. 
This implies $ \nabla_\textbf{P}u_1(\vec{x},\frac{a_\eta}{\eta}\textbf{P}\vec{x})\dto  \nabla_\textbf{P}u_1(\vec{x},\vec{y})$.
We thus have

\begin{equation}
    A\left( a_\eta\vec{y}, \frac{a_\eta}{\eta}\textbf{P}\vec{y} \right)\left(\nabla u_0(\vec{y}) + \nabla_\textbf{P}u_1(\vec{y}, \frac{a_\eta}{\eta}\textbf{P}\vec{y})\right)\dto A(\vec 0,\vec{z  })\left(\nabla u_0(\vec{z}) + \nabla_{\mathbf{P}}u_1(\vec{y},\vec {z})\right).
\end{equation}
Using Assumption \ref{ass:A}, we find
\begin{equation}
    \begin{aligned}
        \lVert u_\eta-u_0\rVert^2 &+ c\lVert \nabla u_\eta - \nabla u_0 -\nabla_\textbf{P}u_1(\cdot,\frac{a_\eta}{\eta}\textbf{P}\cdot)\rVert^2\\
        &\leq \int_{\R^n} A\left( a_\eta\vec{y}, \frac{a_\eta}{\eta}\textbf{P}\vec{y} \right)\left(\nabla u_\eta(\vec{y}) -\nabla u_0(\vec{y}) - \nabla_\textbf{P}u_1(\vec{y}, \frac{a_\eta}{\eta}\textbf{P}\vec{y})\right)\cdot \\ 
        &\left(\nabla \overline{u}_\eta(\vec{y}) -\nabla \overline{u}_0(\vec{y}) - \nabla_\textbf{P}\overline{u}_1(\vec{y}, \frac{a_\eta}    {\eta}\textbf{P}\vec{y})\right) + \lvert u_\eta(\vec{y})- u_0(\vec{y})\rvert^2 dx\\
        &=\langle f,u_\eta\rangle + \int A\left( a_\eta\vec{y}, \frac{a_\eta}{\eta}\textbf{P}\vec{y} \right)\left[-\nabla u_\eta(\vec{y})\cdot\left(\nabla \overline{u}_0(\vec{y})
        + \nabla_\textbf{P}\overline{u}_1(\vec{y}, \frac{a_\eta}{\eta}\textbf{P}\vec{y})\right)\right.\\
        &\left.- \left(\nabla {u}_0(\vec{y}) +\nabla_\textbf{P}{u}_1(\vec{y}, \frac{a_\eta}    {\eta}\textbf{P}\vec{y})\right)\cdot \nabla\overline{u}_\eta(\vec{y})\right.\\
        &\left.+ \left(\nabla {u}_0(\vec{y}) + \nabla_\textbf{P}{u}_1(\vec{y}, \frac{a_\eta}    {\eta}\textbf{P}\vec{y})\right)\cdot \left(\nabla \overline{u}_0(\vec{y}) + \nabla_\textbf{P}\overline{u}_1(\vec{y}, \frac{a_\eta}    {\eta}\textbf{P}\vec{y})\right)\right] + \\
        & \lvert u_0(\vec{y})\rvert^2 - u_\eta(\vec{y})\overline{u}_0(\vec{y}) - u_0(\vec{y})\overline{u}_\eta(\vec{y})dx.
    \end{aligned}
\end{equation}
Since every term in the above integral contains at most one $u_\eta$, we can take the two scale cut-and-project limit to find
\begin{equation}
    \begin{aligned}
        \lim_{\eta\rightarrow0} \lVert u_\eta-u_0\rVert^2 &+ c\lVert \nabla u_\eta - \nabla u_0 -\nabla_\textbf{P}u_1(\cdot,\frac{a_\eta}{\eta}\textbf{P}\cdot)\rVert^2\leq \langle f,u_0\rangle -\langle u_0,f\rangle =2i\Im(\langle f,u_0\rangle).
    \end{aligned}
\end{equation}
The claim follows upon taking the real part. Alternatively, one could also note that since $A$ is Hermitian by Assumption \ref{ass:A}, $\langle f,u_0\rangle$ needs to be real-valued, hence the right hand side of the above equation vanishes.

In particular, this implies  $\lVert R_\eta(-1)f - R^h(-1)f\rVert\rightarrow0$ for all $f \in \mathcal{F}$ and hence, $\lim \sigma(R_\eta)\supset\sigma(R^h) = [0,1]$, or alternatively, $\lim _{\eta\rightarrow 0}\sigma(T_\eta)\supset\R_+$.

\section{Boundary Layer Spectrum}\label{sec:boundarylayer}

Throughout this subsection, we assume $\Omega\subset\R^n$ is a bounded domain with $\mathcal{C}^{2}$ boundary. In Figure~\ref{fig:eigenvalue_critical} we showed an example of the spectral convergence predicted by Theorem~\ref{thm: SpectralLimit} in the critical scaling limit. As well as observing convergence of the bulk eigenvalues to the Bloch spectrum, we see that the boundary spectrum $\sigma_{\text{Boundary}}$ is highly sensitive to how the coefficients are truncated at the edge of the domain, which differs for different values of $\eta$. The purpose of this section is to provide a more insightful characterisation of $\sigma_{\text{Boundary}}$. 

We denote the tangent half-space at $\vec{x}_0$ as  $ T_{\vec{x}_0}^+\Omega$ and further set
 $\mathbb{T}_{\vec{x}_0} = \vec{x}_0 + T_{\vec{x}_0}^+\Omega$. This allows us to define the map $T_{\vec{x}_0,\vec{y}_0}: H^1_0(T_{\vec{x}_0}^+\Omega)\rightarrow H^{-1}(T_{\vec{x}_0}^+\Omega)$ such that
\begin{equation}
   u\mapsto \langle T_{\vec{x}_0,\vec{y}_0}u,v \rangle_{H^{-1},H^{1}}:=\int_{T_{\vec{x}_0}^+\Omega}A\left( \vec{x}_0, \vec{y}_0+ \textbf{P}\vec{x}\right)\nabla u(\vec{x}) \cdot \nabla \overline{v}(\vec{x}) dx.
\end{equation}
In this section, we will show that $\sigma_{\text{Boundary}}$ is contained in the spectrum of $T_{\vec{x}_0,\vec{y}_0}$.
\begin{theorem}\label{thm: BoundarySpectrum}
    Define 
    \begin{equation*}
        B=\left\{(\vec{x}_0, \vec{y}_0)\in\partial\Omega\times Y^m\mid\exists\text{ a subsequence $\eta_k$ such that }(\vec{x}_0, \vec{y}_0)=\lim_k(\vec{x}_0,\eta_k^{-1}\textbf{P}\vec{x}_0)\right\},
    \end{equation*}
    then it holds that
    \begin{equation}        \sigma_{\text{Boundary}}\subset\bigcup_{(\vec{x}_0,\vec{y}_0)\in B}\sigma_{}(T_{\vec{x}_0,\vec{y}_0}).
    \end{equation}
\end{theorem}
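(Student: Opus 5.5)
Take $\lambda\in\sigma_{\text{Boundary}}$. By \eqref{eq: BoundarySpectrumDef} there are eigenpairs $(\lambda_\eta,u_\eta)$ of \eqref{eq: HighFrequencyCritical}, with $\lVert u_\eta\rVert=1$, $\lambda_\eta\to\lambda$ and $\sup_\eta\lVert d_\eta u_\eta\rVert\le M$. The plan mirrors Lemmas~\ref{lem: PseudoeigenvectorsGeneral} and~\ref{lem: FrozenCoefficients}. We localise $u_\eta$ near a single boundary point, flatten the boundary, and freeze the macroscopic variable. This produces pseudo-eigenfunctions of some half-space operator $T_{\vec x_0,\vec y_0}$, and Lemma~\ref{lem: spectral_equivalence} then gives $\lambda\in\sigma(T_{\vec x_0,\vec y_0})$.

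\textbf{Step 1: localisation.} By Chebyshev, $\lVert u_\eta\mathbf 1_{\{d_\eta>K\}}\rVert\le M/K$. Hence, for $K$ large, at least $1-M^2/K^2$ of the mass lies in the layer $\{d_\eta\le K\}$. Cover $\partial(\eta^{-1}\Omega)$ by $N_\eta=\mathcal O((\beta_\eta/\eta)^{n-1})$ boundary patches of tangential size $\beta_\eta/\eta$ and thickness $K$. Here $\beta_\eta\to0$, with $\beta_\eta/\eta\to\infty$ at a rate fixed later. Pick the patch with maximal mass, which is at least $c\,\beta_\eta^{(n-1)/2}$ up to normalisation. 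Multiply by a cutoff $\varphi_\eta$ equal to $1$ on the patch, supported on the doubled patch, with $|\nabla\varphi_\eta|\le C\eta\beta_\eta^{-1}$ tangentially. In the normal direction we take the cutoff of the form $\psi(d_\eta/K')$ with $K'\gg K$, so its gradient is $\mathcal O(1/K')$. As in the proof of Lemma~\ref{lem: FrozenCoefficients}, we use Caccioppoli to control $\varphi_\eta\nabla u_\eta$, and the commutator $[T_\eta,\varphi_\eta]$ has $H^{-1}$ norm $\mathcal O(\eta\beta_\eta^{-1}+1/K')$ relative to $\lVert\varphi_\eta u_\eta\rVert$. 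The normal cutoff costs a loss $\lesssim M/K$ in the lower bound of the localised mass, which is acceptable because the mass is already concentrated.

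\textbf{Step 2: flattening and freezing.} Let $\vec x^\eta\in\partial\Omega$ be the centre of the chosen patch. By compactness of $\partial\Omega\times\bar Y^m$ (with $Y^m$ understood as the torus), pass to a subsequence with $\vec x^\eta\to\vec x_0$ and $\eta^{-1}\textbf P\vec x^\eta \bmod \Z^m\to\vec y_0$, so $(\vec x_0,\vec y_0)\in B$. More precisely, we need the points of $B$ to be limits of $(\vec x^\eta,\eta^{-1}\textbf P\vec x^\eta)$ with moving base points; we read the definition of $B$ in this sense, that is, as the closure of such limits. Translate by $\eta^{-1}\vec x^\eta$ and rotate so that the inner normal at $\vec x^\eta$ becomes $e_n$. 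Using the $\mathcal C^2$ chart, straighten $\partial(\eta^{-1}\Omega)$ locally: the boundary graph has curvature $\mathcal O(\eta)$, so on a patch of size $\beta_\eta/\eta$ the diffeomorphism $\Phi_\eta$ satisfies $\lVert D\Phi_\eta-I\rVert_\infty\lesssim\beta_\eta$. Pulling back changes the coefficient matrix and the $L^2$ weight by $\mathcal O(\beta_\eta)$. Next we freeze: $A(\eta\vec y,\textbf P\vec y)$ with $\vec y=\eta^{-1}\vec x^\eta+\vec z$ equals $A(\vec x^\eta+\eta\vec z,\eta^{-1}\textbf P\vec x^\eta+\textbf P\vec z)$. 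By Assumption~\ref{ass:A}(iv) this differs from $A(\vec x_0,\vec y_0+\textbf P\vec z)$ by at most $[A]_{\mathcal C^{0,\alpha}}\bigl(\beta_\eta+|\vec x^\eta-\vec x_0|+|\eta^{-1}\textbf P\vec x^\eta-\vec y_0|_{\mathbb T^m}\bigr)^\alpha$ uniformly on the support, and this tends to $0$. The tangent half-space of $\vec x^\eta$ converges to $T^+_{\vec x_0}\Omega$; absorb the small rotation into the same $\mathcal O(\beta_\eta^\alpha)$ error.

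\textbf{Step 3: conclusion and main obstacle.} The resulting $w_\eta\in H^1_0(T^+_{\vec x_0}\Omega)$ satisfies $\lVert(T_{\vec x_0,\vec y_0}-\lambda)w_\eta\rVert_{H^{-1}}\le\epsilon_\eta\lVert w_\eta\rVert$, with
\[
\epsilon_\eta\lesssim |\lambda_\eta-\lambda|+\eta\beta_\eta^{-1}\beta_\eta^{-(n-1)/2}+1/K'+\beta_\eta^\alpha+o(1).
\]
Choosing $\beta_\eta=\eta^{2/(n+3)}$ (as in Lemma~\ref{lem: FrozenCoefficients}) and $K'\to\infty$ slowly makes $\epsilon_\eta\to0$. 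Lemma~\ref{lem: spectral_equivalence} then yields $\lambda\in\sigma(T_{\vec x_0,\vec y_0})$.

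The delicate point is Step 1's normal cutoff. Only $\lVert d_\eta u_\eta\rVert$ is bounded, not exponential decay, so the mass beyond depth $K'$ is only $\mathcal O(M/K')$. The maximal patch could in principle carry a disproportionately large tail relative to its own mass. I would handle this by selecting the patch that maximises the ratio of mass in $\{d_\eta\le K\}$ against $\lVert d_\eta u_\eta\rVert$ restricted to the patch; a pigeonhole argument over the $N_\eta$ patches guarantees such a patch with ratio comparable to the global one. Consistency of the subsequence defining $\vec y_0$ with the $B$ definition is the second thing I would check carefully.
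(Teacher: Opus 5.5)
The overall structure matches the paper: concentration from $\sup_\eta\lVert d_\eta u_\eta\rVert<\infty$, boundary patches, cutoff plus Caccioppoli, flattening, freezing, then Lemma~\ref{lem: spectral_equivalence}. But Step~2 misses an error term, and that term invalidates your choice of $\beta_\eta$. Flattening does not only perturb the metric by $D\Phi_\eta-I=\mathcal O(\beta_\eta)$; it also displaces the fast variable. In flattened coordinates the coefficient is $A(\cdot,\textbf{P}\Phi_\eta^{-1}(\vec w))$, with $\Phi_\eta^{-1}(\vec w)-\vec w=\eta^{-1}\gamma(\eta\vec w')\,\vec n$. On a patch of tangential radius $\beta_\eta/\eta$ this displacement has size $\sim\beta_\eta^2/\eta$. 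Since $A(\vec x_0,\cdot)$ varies on scale one in these coordinates, comparing with $A(\vec x_0,\vec y_0+\textbf{P}\vec w)$ costs $[A]_{\mathcal C^{0,\alpha}}(\beta_\eta^2/\eta)^\alpha$, so you need $\beta_\eta=o(\eta^{1/2})$. This is the constraint $\beta_\eta^2\ll\eta$ the paper imposes; hence its $\beta_\eta=\eta^{2/3}$ and the $C\eta^{1/3}$ in its bound on $A_\eta$. Your $\beta_\eta=\eta^{2/(n+3)}$ violates it for every $n\ge 2$: the boundary bends by $\sim\eta^{-(n-1)/(n+3)}\to\infty$ across the patch, so $w_\eta$ is not close to a pseudo-eigenfunction of any flat half-space operator.

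Imposing $\beta_\eta=o(\eta^{1/2})$ exposes a second gap. Your tangential term $\eta\beta_\eta^{-1}\beta_\eta^{-(n-1)/2}$ rests on the global lower bound $c\beta_\eta^{(n-1)/2}$ for the localised mass. It vanishes only if $\beta_\eta\gg\eta^{2/(n+1)}$, which is incompatible with $\beta_\eta=o(\eta^{1/2})$ for $n\ge3$. You therefore need a relative bound $\lVert u_\eta\rVert_{L^2(\Gamma)}\le C_1\lVert\varphi_\eta u_\eta\rVert$ on a neighbourhood $\Gamma$ of the \emph{whole} transition region $\operatorname{supp}\nabla\varphi_\eta$, tangential as well as normal. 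The paper gets this from its good/bad cube pigeonhole: bounded overlap forces bad cubes to carry at most a fixed fraction of the mass. After that, the commutator is $\mathcal O(\eta/\beta_\eta)$ relative to $\lVert\varphi_\eta u_\eta\rVert$. Your proposed ratio (layer mass against $\lVert d_\eta u_\eta\rVert$ on the patch) handles only the normal tail. Once you drop the maximal-mass choice, it also no longer controls the neighbouring patches.

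Finally, ``absorbing the small rotation'' is not free. Rotating by $\theta_\eta\sim|\vec x^\eta-\vec x_0|$ shifts the fast variable by $\sim\theta_\eta\beta_\eta/\eta$ across the patch, and nothing makes that small. This, like your concern about $B$, comes from the moving centre $\vec x^\eta$. The paper does not resolve it either: it centres $\Phi_\eta$ at $\vec x_0$ and asserts $\operatorname{supp}w_\eta\subset B_{R\eta^{-1/3}}(\eta^{-1}\vec x_0)$ without justification. Without such localisation, working with the half-space at $\vec x^\eta$ itself only yields $\lambda\in\overline{\bigcup_{\vec x\in\partial\Omega,\,\vec y\in Y^m}\sigma(T_{\vec x,\vec y})}$.
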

\begin{remark}
    Without further assumptions on $\eta$, it is known that $\sigma_{\text{Boundary}} = \mathbb{R}_+$ \cite{castro1996remarque}.
\end{remark}
\begin{remark}
    Theorem \ref{thm: BoundarySpectrum} is a refinement of \cite[Theorem 7.11]{allaire1998bloch} to suit the present setting of quasiperiodic coefficients.
\end{remark}
The rest of this article is devoted to proving Theorem~\ref{thm: BoundarySpectrum}. The key observation is that the definition of the $\sigma_{\text{Boundary}}$ forces the associated eigenfunctions to concentrate quite strongly near the boundary. 

For $R>0$, define the \emph{tubular neighbourhood} of $\partial(\Omega)$ by
\begin{equation}
    \mathcal{T}_{R}:=\left\{\vec{x}\in\Omega\mid d(\vec{x}) < R\right\},
\end{equation}
and its complement $\mathcal{T}_{R}^c:=\Omega\setminus\mathcal{T}_{R}$.
Similarly, we also define the tubular neighbourhood of $\partial(\eta^{-1}\Omega)$ by
\begin{equation}
    \mathcal{T}_{R,\eta}:=\left\{\vec{y}\in\eta^{-1}\Omega\mid d_\eta(\vec{y}) < R\right\},
\end{equation}
and its complement $\mathcal{T}_{R,\eta}^c:=\eta^{-1}\Omega\setminus\mathcal{T}_{R,\eta}$.

\begin{lemma}[$L^2$~concentration near the boundary]\label{lem:L2concentration}
    Let $\lambda\in\sigma_{\text{Boundary}}$ and let $(\lambda_\eta,u_\eta)$ be a corresponding sequence of eigenpairs for $T_\eta$ on $\eta^{-1}\Omega$ with $\lVert u_\eta\rVert_{L^2}=1$ and $\sup_\eta\lVert d_\eta u_\eta\rVert\leq C$. Then for every $R>0$,
    \begin{equation}\label{eq:L2tail}
        \lVert u_\eta\rVert^2_{L^2(\mathcal{T}_{R,\eta}^c)}\leq \frac{C^2}{R^2}.
    \end{equation}
    In particular, for any sequence $R_\eta\rightarrow\infty$ we have
    \begin{equation}
        \lVert u_\eta\rVert^2_{L^2(\mathcal{T}_{R_\eta,\eta})}=1 - \mathcal{O}(R_\eta^{-2}).
    \end{equation}
\end{lemma}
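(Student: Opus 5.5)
This is a Chebyshev-type estimate, and the plan is to read it off directly from the weighted bound $\sup_\eta\lVert d_\eta u_\eta\rVert\le C$ in the definition \eqref{eq: BoundarySpectrumDef} of $\sigma_{\text{Boundary}}$.

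Fix $R>0$ and $\eta$. By definition of the complement of the rescaled tubular neighbourhood, every $\vec y\in\mathcal{T}_{R,\eta}^c$ satisfies $d_\eta(\vec y)\ge R$. Hence $R^2\lvert u_\eta(\vec y)\rvert^2\le d_\eta(\vec y)^2\lvert u_\eta(\vec y)\rvert^2$ pointwise on $\mathcal{T}_{R,\eta}^c$. Integrating over $\mathcal{T}_{R,\eta}^c$ and then enlarging the domain of integration to all of $\eta^{-1}\Omega$ (the integrand is nonnegative) gives
\begin{equation*}
R^2\lVert u_\eta\rVert^2_{L^2(\mathcal{T}_{R,\eta}^c)}\le \int_{\eta^{-1}\Omega} d_\eta^2\lvert u_\eta\rvert^2\,d\vec y=\lVert d_\eta u_\eta\rVert^2\le C^2 .
\end{equation*}
Dividing by $R^2$ yields \eqref{eq:L2tail}. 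Only measurability of $d_\eta$ is used here, and this holds because $d_\eta$ is Lipschitz.

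For the second assertion, take $R=R_\eta$. Since $\eta^{-1}\Omega$ is the disjoint union of $\mathcal{T}_{R_\eta,\eta}$ and $\mathcal{T}^c_{R_\eta,\eta}$, and $\lVert u_\eta\rVert_{L^2}=1$, we get
\begin{equation*}
\lVert u_\eta\rVert^2_{L^2(\mathcal{T}_{R_\eta,\eta})}=1-\lVert u_\eta\rVert^2_{L^2(\mathcal{T}^c_{R_\eta,\eta})}\ge 1-C^2R_\eta^{-2},
\end{equation*}
and trivially the left-hand side is at most $1$. This is exactly $1-\mathcal{O}(R_\eta^{-2})$, with the implied constant uniform in $\eta$ because $C$ is.

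I do not expect any real obstacle; the only point needing care is that $C$ is the uniform bound from \eqref{eq: BoundarySpectrumDef}, so the estimate is uniform along the sequence. Note that no eigenvalue equation is needed here. The equation will only enter later, when gradient concentration is derived via Caccioppoli-type estimates.
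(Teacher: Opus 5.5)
Your proof is correct and follows the paper's own argument: a Chebyshev-type bound using $d_\eta\ge R$ on $\mathcal{T}_{R,\eta}^c$ together with $\lVert d_\eta u_\eta\rVert\le C$, and then the complement identity with $\lVert u_\eta\rVert=1$ for the second claim. Your extra observations, that $d_\eta$ is measurable because it is Lipschitz and that the eigenvalue equation is not used, are accurate.
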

\begin{proof}
    On $\mathcal{T}_{R,\eta}^c$ we have $d_\eta\geq R$, then $C>0$ is some constant independent of $R$ such that
    \begin{equation*}
        C^2 \geq \lVert d_\eta u_\eta\rVert^2 \geq \int_{\mathcal{T}_{R,\eta}^c}d_\eta^2\lvert u_\eta\rvert^2\,d\vec{y} \geq R^2\lVert u_\eta\rVert^2_{L^2(\mathcal{T}_{R,\eta}^c)}.
    \end{equation*}
    The second statement follows since $\lVert u_\eta\rVert^2_{L^2(\mathcal{T}_{R_\eta,\eta})} = 1 - \lVert u_\eta\rVert^2_{L^2(\mathcal{T}_{R_\eta,\eta}^c)} \geq 1 - C^2/R_\eta^2 \rightarrow 1$.
\end{proof}

We cover $\partial\Omega$ by $(n-1)$-dimensional cubes $(q^\eta_j)_j$ of side
$\beta_\eta$ (flattening the boundary locally to $\mathbb{R}^{n-1}$, gridding, and
mapping back. The resulting overlap has multiplicity bounded by a constant depending on $\partial\Omega$).
Since $\Omega$ is $\mathcal{C}^2$, the nearest-point projection $\pi:T_\delta\to\partial\Omega$ is
$\mathcal{C}^2$ for some $\delta>0$ \cite{gilbarg1977elliptic}, so for $\beta_\eta\le\delta$ we extend
each cube inwards along $\pi$ to a slab $Q^\eta_j$. The rescaled cubes $\eta^{-1}Q^\eta_j$
then cover $T_{\beta_\eta/\eta,\,\eta}$, with $N_\eta\le C_{\partial\Omega}\,\beta_\eta^{1-n}$.

We now proceed similar to Section \ref{sec:critical}. Let $\mathcal{N}(j,\eta)$ be the set of indices of cubes neighbouring $Q_j^\eta$. Set $\hat{Q}_j^\eta = \bigcup_{j\in\mathcal{N}(j,\eta)}$ and define
\begin{equation}
    \chi^\eta_j(x) = \begin{cases}
        1,\quad x\in Q_j^\eta\\
        0,\quad x\notin \hat{Q}_j^\eta. 
    \end{cases}
\end{equation}
Next, we define
\begin{equation}
    \chi^\eta_r(x) = \begin{cases}
        1,\quad x\in\mathcal{T}_{\beta_\eta/2\eta,\eta}\\
        0,\quad x\notin\mathcal{T}_{2\beta_\eta/3\eta,\eta}.
    \end{cases}
\end{equation}
Furhter, set $\chi_{\eta,j}=\chi_{j(\eta)}\chi^\eta_r.$ We now define another smooth bump function 
\begin{equation}
    \hat{\chi}_{j,\eta}=\begin{cases}
        1, \quad \vec{x}\in\operatorname{supp}\nabla\chi_{j,\eta},\\
        0, \quad \vec{x}\notin\mathcal{T}_{\beta_\eta/\eta,\eta}.
    \end{cases}
\end{equation}
Set now $\Gamma_{j,\eta}=\operatorname{supp}\hat{\chi}_{j,\eta}$
Our aim is to find a sequence of cubes $j(\eta)$ such that $\lVert \chi_\eta u_\eta\rVert$ is lower bounded while at the same time there exists some constant $C_1>0$, independent of $\eta$ such that $\lVert u_\eta\mid_{\Gamma_{j,\eta}}\rVert\leq C_1\lVert \chi_\eta u_\eta\rVert$. To this end, we say an index $j$ is bad, if
this condition is not satisfied. Let $C_2$ be a constant that upper bounds the number of overlapping cubes, which can be chosen independently of $\eta$. We then have
\begin{equation}
   \begin{aligned}
        \sum_{j \text{ bad}}\lVert\chi_\eta u_\eta\rVert^2 &< C_1^{-2}\sum_{j \text{ bad}}\lVert u_\eta\mid_{\Gamma_{j,\eta}}\rVert^2\\
        &\leq 3^{n-1}C_2C_1^{-2}
   \end{aligned}
\end{equation}
since each cube is contained in at most $3^{n-1}$ neighbourhoods. Thus, by choosing $C_1=\sqrt{2\cdot3^{n-1}C_2}$, we find that the mass of bad cubes is bounded:
\begin{equation}
    \sum_{j \text{ bad}} \lVert u_\eta \mid_{Q_j^\eta}\rVert^2 \leq \frac{1}{4}.
\end{equation}
At the same time, the concentration Lemma \ref{lem:L2concentration} yields for $\eta$ small enough
\begin{equation}
    \sum_j\lVert u_\eta \mid_{Q_j^\eta\cap\mathcal{T}_{\beta/2\eta,\eta}}\rVert^2\geq\frac{1}{2}.
\end{equation}
Hence, the mass of the union good cubes intersected with $\mathcal{T}_{\beta_\eta/2\eta,\eta}$ is lower bounded by $1/4$. The pigeonhole principle now implies that there exists some good $j(\eta)$ satisfying
\begin{equation}\label{eq: NiceCube}
    \lVert \chi_\eta u_\eta \rVert\geq C_2\beta_{\eta}^{(n-1)/2}\quad\text{and}\quad\lVert  u_\eta \mid_{\Gamma{j,\eta}}\rVert\leq C_1\lVert u_\eta \rVert.
\end{equation}
where $C_0$ is a constant that bounds the number of overlapping cubes and can be made independent of $\eta$. After having chosen such a good cube, we will drop the subscript $j$ from $\chi_{\eta,j}$ in the following.

\begin{lemma}\label{lem:BoundaryPseudo}
    Let $w_\eta = \frac{\chi_\eta u_\eta}{\lVert \chi_\eta u_\eta\rVert}$. Then it holds that
    \begin{equation}
        \langle T_{\vec{x}_0}w_\eta,v\rangle_{H^{-1},H^1} = \lambda_\eta\langle w_\eta,v\rangle + \langle r_\eta,v\rangle_{H^{-1},H^1},
    \end{equation}
    where $$\lVert r_\eta\rVert\leq C(\eta^{2/3} + \lvert \vec{x}_0 - \vec{x}_{j(\eta)}^\eta\rvert)^\alpha + C\eta^{1/3}$$ and $\operatorname{supp}(r_\eta)\subset \operatorname{supp}(w_\eta)$.
\end{lemma}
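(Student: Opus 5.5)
The plan mirrors the proof of Lemma~\ref{lem: FrozenCoefficients}: apply $T_{\vec{x}_0}$ (in the boundary version, with the coefficient $A(\vec x_0,\textbf{P}\vec y)$ on $\eta^{-1}\Omega$) to the localised eigenfunction $w_\eta$, and split the result into three parts. The first is the exact eigen-equation for $u_\eta$ tested against $\chi_\eta v$. The second is a commutator term, carrying a factor $\nabla\chi_\eta$. The third is the error from freezing the slow variable at $\vec x_0$.

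\textbf{Step 1: commutator term.} For $v\in H^1$ we write
\begin{equation*}
\langle T_\eta(\chi_\eta u_\eta),v\rangle_{H^{-1},H^1}=\lambda_\eta\langle \chi_\eta u_\eta,v\rangle+\int A(\eta\vec y,\textbf{P}\vec y)\bigl(u_\eta\nabla\chi_\eta\cdot\nabla\bar v-\nabla u_\eta\cdot\nabla\chi_\eta\,\bar v\bigr)\,d\vec y .
\end{equation*}
This uses $\langle T_\eta u_\eta,\chi_\eta v\rangle=\lambda_\eta\langle u_\eta,\chi_\eta v\rangle$. The test function $\chi_\eta v$ is admissible because $u_\eta\in H^1_0(\eta^{-1}\Omega)$, so no boundary term appears and the Dirichlet condition is inherited by $w_\eta$.

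The cut-offs are built at scale $\beta_\eta/\eta$ in the rescaled variable, so $\lvert\nabla\chi_\eta\rvert\le C\eta\beta_\eta^{-1}$. The commutator integrand is supported in $\operatorname{supp}\nabla\chi_\eta\subset\Gamma_{j,\eta}$. It is therefore bounded by
\begin{equation*}
C\eta\beta_\eta^{-1}\bigl(\lVert u_\eta\rVert_{L^2(\Gamma_{j,\eta})}+\lVert\nabla u_\eta\rVert_{L^2(\Gamma_{j,\eta})}\bigr)\lVert v\rVert_{H^1}.
\end{equation*}

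To control the gradient, we apply Caccioppoli's inequality to the exact eigenfunction with the cut-off $\hat\chi_{j,\eta}$. This is valid up to the boundary since $u_\eta$ vanishes on $\partial\eta^{-1}\Omega$. It gives
\begin{equation*}
\lVert\nabla u_\eta\rVert_{L^2(\Gamma_{j,\eta})}\le C(1+\lambda_\eta)^{1/2}\lVert u_\eta\rVert_{L^2(\tilde\Gamma_{j,\eta})}
\end{equation*}
on a slightly enlarged set $\tilde\Gamma_{j,\eta}$. By the good-cube property \eqref{eq: NiceCube}, this is comparable to $\lVert\chi_\eta u_\eta\rVert$, possibly after redefining $\Gamma_{j,\eta}$ slightly larger, which the bad-cube counting tolerates.

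Dividing by $\lVert\chi_\eta u_\eta\rVert$, the commutator contributes $C\eta\beta_\eta^{-1}$ to $\lVert r_\eta\rVert_{H^{-1}}$.

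\textbf{Step 2: freezing error.} The coefficient difference is
\begin{equation*}
A(\eta\vec y,\textbf{P}\vec y)-A(\vec x_0,\textbf{P}\vec y).
\end{equation*}
On $\operatorname{supp} w_\eta$ we have $\eta\vec y\in\eta\hat Q^\eta_{j}$, whose diameter is $O(\beta_\eta)$, so by Assumption~\ref{ass:A}(iv) the difference is at most
\begin{equation*}
[A]_{\mathcal C^{0,\alpha}}\bigl(C\beta_\eta+\lvert\vec x_0-\vec x^\eta_{j(\eta)}\rvert\bigr)^\alpha .
\end{equation*}
It multiplies $\lVert\nabla w_\eta\rVert$. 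This norm is bounded by $\lVert\nabla\chi_\eta\rVert_\infty$ plus $\lVert\chi_\eta\nabla u_\eta\rVert/\lVert\chi_\eta u_\eta\rVert$, and the latter is $O((1+\lambda)^{1/2})$ by the same Caccioppoli and good-cube argument.

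\textbf{Step 3: choice of scale.} Taking $\beta_\eta=\eta^{2/3}$ gives:
\begin{itemize}
\item commutator error $C\eta\beta_\eta^{-1}=C\eta^{1/3}$;
\item freezing error $C(\eta^{2/3}+\lvert\vec x_0-\vec x^\eta_{j(\eta)}\rvert)^\alpha$.
\end{itemize}
This is the stated bound. All error terms are integrals against functions supported in $\operatorname{supp}\chi_\eta$, which gives $\operatorname{supp} r_\eta\subset\operatorname{supp} w_\eta$ (up to closure).

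\textbf{Main obstacle.} The delicate point is Step~1. The gradient of $u_\eta$ on the transition region must be controlled by $\lVert\chi_\eta u_\eta\rVert$ and not by $\lVert u_\eta\rVert=1$. Otherwise the lower bound $\lVert\chi_\eta u_\eta\rVert\gtrsim\beta_\eta^{(n-1)/2}$ would introduce a dimension-dependent loss that spoils the $\eta^{1/3}$ rate. This is exactly why good cubes were selected. I would make sure the Caccioppoli enlargement of $\Gamma_{j,\eta}$ is still covered by a bounded number of neighbouring cubes, so the good-cube inequality, with an adjusted $C_1$, applies to it directly.
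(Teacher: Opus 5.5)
Your proposal follows the paper's proof step for step. It uses the same split of $\langle T_{\vec x_0}w_\eta,v\rangle$ into three parts: the eigen-equation tested against $\chi_\eta v$, two commutator terms carrying $\nabla\chi_\eta$, and the H\"older freezing error. It controls $u_\eta$ and $\nabla u_\eta$ on the transition layer the same way, by Caccioppoli combined with the good-cube inequality \eqref{eq: NiceCube}, and it makes the same choice $\beta_\eta=\eta^{2/3}$. Your remark that the Caccioppoli cut-off needs a slightly enlarged set tightens a point the paper passes over.

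One justification in Step~1 is wrong, however, and as you state it (for arbitrary $v\in H^1$) the step fails. The identity $\langle T_\eta u_\eta,\chi_\eta v\rangle=\lambda_\eta\langle u_\eta,\chi_\eta v\rangle$ requires the test function $\chi_\eta v$ itself to lie in $H^1_0(\eta^{-1}\Omega)$. The fact that $u_\eta\in H^1_0(\eta^{-1}\Omega)$ only guarantees that $w_\eta$ inherits the Dirichlet condition. It does not remove the conormal term $\lVert\chi_\eta u_\eta\rVert^{-1}\int_{\partial(\eta^{-1}\Omega)}(A\nabla u_\eta\cdot\vec\nu)\,\chi_\eta\bar v\,dS$, and $\chi_\eta$ equals $1$ on the part of $\partial(\eta^{-1}\Omega)$ bordering the chosen cube. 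This term is not small in general. If it were, $w_\eta$ extended by zero would be a pseudo-eigenfunction of the whole-space operator $T_{\vec x_0}$, and Lemma~\ref{lem: spectral_equivalence} would give $\sigma_{\text{Boundary}}\subset\sigma_{\text{Bloch}}$. That is false whenever boundary-localised eigenvalues lie in a gap of $\sigma_{\text{Bloch}}$.

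So the lemma must be read with $v\in H^1_0(\eta^{-1}\Omega)$ and $r_\eta$ measured in the dual of $H^1_0(\eta^{-1}\Omega)$. This is what the paper implicitly does: it pairs $T_\eta u_\eta$, defined on $H^1_0(\eta^{-1}\Omega)$, with $\chi_\eta v$. It is also all that is needed afterwards, where the test functions are $v\circ\Phi_\eta$ with $v\in\mathcal{C}^\infty_0(T^+_{\vec x_0}\Omega)$ and the limit operator is the Dirichlet half-space operator $T_{\vec x_0,\vec y_0}$. With that restriction your argument goes through unchanged.
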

\begin{proof}
    First, we will estimate the error due to freezing the macroscopic coefficient
    \begin{align}
        \langle (T_{\vec{x}_0} -T_{\eta})w_\eta,v \rangle_{H^{-1},H^{1}}&=\lVert \chi_\eta u_\eta\rVert^{-1}\int_{\eta^{-1}\Omega}\left(A\left( \vec{x}_0, \textbf{P}\vec{y}\right) - A\left( \eta\vec{y}, \textbf{P}\vec{y}\right)\right)\nabla(\chi_\eta(\vec{y})u_\eta(\vec{y}))\cdot\nabla \overline{v}(\vec{y})dy \nonumber\\
        &\leq \lVert \chi_\eta u_\eta\rVert^{-1}[A]_{\mathcal{C}^{0,\alpha}}(\beta_\eta + \lvert \vec{x}_0 - \vec{x}_{j(\eta)}^\eta\rvert)^\alpha\lVert \chi_\eta u_\eta\rVert_{H^1}\lVert v \rVert_{H^1}. \label{eq:BoundaryPseudo1}
    \end{align}
    Using that $\lvert \nabla \chi_\eta\rvert \leq C_1\beta_\eta^{-1}\eta$,  upon another use of Caccioppoli and elliptic regularity we have
    \begin{equation}\label{eq: Boundary0}
        \lVert \nabla (\chi_\eta u_\eta)\rVert^2 \leq C(1 + \lvert \lambda_\eta\rvert + \eta^2\beta_\eta^{-2})\lVert \chi_\eta u_\eta\rVert^2.
    \end{equation}
    Applying this bound to Equation \eqref{eq:BoundaryPseudo1}, we have
    \begin{equation}
        \lvert \langle (T_{\vec{x}_0} -T_{\eta})w_\eta,v \rangle_{H^{-1},H^{1}}\rvert\leq (1 + \lvert \lambda_\eta\rvert + \eta^2\beta_\eta^{-2})^{1/2}[A]_{\mathcal{C}^{0,\alpha}}(\beta_\eta + \lvert \vec{x}_0 - \vec{x}_{j(\eta)}^\eta\rvert)^\alpha \lVert v \rVert_{H^1}.
    \end{equation}
    Next, we examine $\langle T_{\eta}w_\eta,v \rangle_{H^{-1},H^{1}}$
    \begin{equation}
        \begin{aligned}
             \langle T_{\eta}w_\eta,v \rangle_{H^{-1},H^{1}}&=\int_{\eta^{-1}\Omega}A\left( \eta\vec{y}, \textbf{P}\vec{y}\right)\nabla(\chi_\eta(\vec{y})u_\eta(\vec{y})))\cdot\nabla \overline{v}(\vec{y})dy\\
             &=\lVert \chi_\eta u_\eta\rVert^{-1} \langle T_{\eta}u_\eta,\chi_\eta v \rangle_{H^{-1},H^{1}}\\
              &\quad + \underbrace{\lVert \chi_\eta u_\eta\rVert^{-1}\int_{\eta^{-1}\Omega}A\left( \eta\vec{y}, \textbf{P}\vec{y}\right)((\nabla\chi_\eta)(\vec{y})u_\eta(\vec{y})\cdot\nabla \overline{v}(\vec{y})))}_{A_\eta}\\
              &\quad- \underbrace{\lVert \chi_\eta u_\eta\rVert^{-1}\int_{\eta^{-1}\Omega}A\left( \eta\vec{y}, \textbf{P}\vec{y}\right)\nabla u_\eta(\vec{y})\cdot (\nabla \chi_\eta)(\vec{y})\overline{v}(\vec{y}))dy}_{B_\eta}.
        \end{aligned}
    \end{equation}
    Using the bound on $\nabla \chi_\eta$ as well as the fact that by Equation \ref{eq: NiceCube} $2\lVert \chi_\eta u_\eta\rVert\geq \lVert u_\eta\mid_{\operatorname{supp}\chi_\eta}\rVert$, the term $A_\eta$ can be bounded by 
    \begin{equation}
        A_\eta\leq C\eta\beta_\eta^{-1}\lVert A\rVert_{L^\infty}\lVert v\rVert_{H^1}.
    \end{equation}
   In bounding $B_\eta$, we may make use of Caccioppoli and Equation \ref{eq: NiceCube}                                                     once more to find that 
   \begin{equation}
       \lVert \nabla u_\eta\mid_{\Gamma_{j,\eta}}\rVert^2 \leq C(1 + \lvert \lambda_\eta\rvert + \eta^2\beta_\eta^{-2})\lVert \hat{\chi}_\eta u_\eta\rVert^2\leq C(1 + \lvert \lambda_\eta\rvert + \eta^2\beta_\eta^{-2})\lVert \chi_\eta u_\eta\rVert^2.
   \end{equation}
    where $C$ again only depends on $A,\lambda$ and the dimension $n$. Hence, we find that
    \begin{equation}
        B_\eta \leq C\eta \beta_\eta^{-1}\lVert A\rVert_{L^\infty}\lVert v\rVert_{H^1}
    \end{equation}
    Putting everything together, we obtain
    \begin{equation}
        \begin{aligned}
            \langle T_{\vec{x}_0}w_\eta,v\rangle_{H^{-1},H^1} &= \lVert \chi_\eta u_\eta\rVert^{-1}\langle T_{\eta}u_\eta,\chi_\eta v\rangle_{H^{-1},H^1}+ \langle (T_{\vec{x}_0} -T_{\eta})w_\eta,v \rangle_{H^{-1},H^{1}} + A_\eta + B_\eta\\
            &= \lambda \langle w_\eta,v\rangle + \langle r_\eta,v\rangle_{H^{-1},H^1},
        \end{aligned}
    \end{equation}
    where
    \begin{equation}
        \langle r_\eta,v\rangle_{H^{-1},H^1} = \langle (T_{\vec{x}_0} -T_{\eta})w_\eta,v \rangle_{H^{-1},H^{1}} + A_\eta + B_\eta.
    \end{equation}
    At a latter point, we will see that we need $\beta_\eta^2\ll\eta$. Thus, by choosing $\beta_\eta = \eta^{2/3}$, we find that $\lVert r_\eta\rVert\leq C(\eta^{2/3} + \lvert \vec{x}_0 - \vec{x}_{j(\eta)}^\eta\rvert)^\alpha + C\eta^{1/3}$.
\end{proof}
In the next step towards proving Theorem~\ref{thm: BoundarySpectrum}, we have to estimate the error caused by straightening out the boundary. To this end, note that since $\Omega$ is $\mathcal{C}^{2}$, we can locally write $\partial \Omega$ as a graph \cite{evans2022partial}. That is, there exists open neighbourhoods $U,V$ of $x_0$ and a map $\gamma: \mathbb{R}^{n-1}\rightarrow\mathbb{R}\in \mathcal{C}^2(\mathbb{R}^{n-1})$ such that the following hold
\begin{itemize}
    \item The map $\Phi: \mathbb{R}^n\rightarrow\mathbb{R}^n, \vec{x}\rightarrow \vec{x} + \gamma(\textbf{P}_{\vec{x}_0}\vec{x})\vec{n}_{x_0}$ is an isomorphism from $U\cap\Omega$ onto $V\cap \mathbb{T}_{\vec{x}_0}^+(\Omega)$ (here, $\textbf{P}_{\vec{x}_0}$ denotes the projection onto the tangent space of $\vec{x}_0$),
    \item $\Phi(\vec{x}_0) = \vec{x}_0$ and $D\Phi(\vec{x}_0)=\operatorname{Id}$,
    \item $\Phi^{-1}(\vec{y}) = \vec{y} - \gamma(\textbf{P}_{\vec{x}_0}\vec{y})\vec{n}_{\vec{x}_0}$,
    \item $\det D\Phi = \det D\Phi^{-1} = 1$.
\end{itemize}
If we let
\begin{equation}\Phi_\eta:\eta^{-1}U\rightarrow\mathbb{R}^n,\vec{y}\mapsto\eta^{-1}\Phi(\eta\vec{y}) - \eta^{-1}\vec{x}_0,
\end{equation}
then, upon passing to a subsequence, we may assume that $\eta^{-1}\textbf{P}\vec{x}_0$ converges to some point $\vec{y}_0\in Y^m$. Let $v\in\mathcal{C}^\infty_0(T_{\vec{x}_0}^+\Omega)$, then for $\eta$ small enough we have $v\circ\Phi_\eta\in\mathcal{C}_0^\infty(\eta^{-1}\Omega)$ and hence
\begin{align}
        &\langle T_{\vec{x}_0}w_\eta, v\circ\Phi_\eta\rangle_{H^{-1},H^1} = \int_{\eta^{-1}\Omega}A\left(\vec{x}_0, \textbf{P}\vec{y}\right)\nabla w_\eta(\vec{y)}\cdot\nabla(v\circ\Phi_\eta(\vec{y}))dy\nonumber\\
        &=\int_{T_{\vec{x}_0}^+\Omega}A\left(\vec{x}_0, \textbf{P}\Phi^{-1}_\eta(\vec{y})\right)(\nabla w_\eta)\circ\Phi^{-1}_\eta(\vec{y})\cdot D\Phi_\eta(\vec{y})\nabla v(\vec{y})dy \nonumber\\
        &=\langle T_{\vec{x}_0,\vec{y}_0}w_\eta\circ\Phi_\eta^{-1},v\rangle_{H^{-1},H^1}\nonumber\\
        &+ \underbrace{\int_{T_{\vec{x}_0}^+\Omega}\left(A\left(\vec{x}_0, \textbf{P}\Phi^{-1}_\eta(\vec{y})\right) - A\left(\vec{x}_0, \vec{y}_0+\textbf{P}\vec{y}\right)\right)(\nabla w_\eta)\circ\Phi^{-1}_\eta(\vec{y})\cdot D\Phi_\eta(\vec{y})\nabla v(\vec{y})dy}_{A_\eta}\nonumber\\
        &+\underbrace{\int_{T_{\vec{x}_0}^+\Omega}A\left(\vec{x}_0, \textbf{P}\Phi^{-1}_\eta(\vec{y})\right)\left((\nabla w_\eta)\circ\Phi^{-1}_\eta(\vec{y}) - \nabla(w_\eta\circ\Phi_\eta^{-1}(\vec{y})\right)\cdot D\Phi_\eta(\vec{y})\nabla v(\vec{y})dy}_{B_\eta}\nonumber\\
        &+\underbrace{\int_{T_{\vec{x}_0}^+\Omega}A\left(\vec{x}_0, \textbf{P}\Phi^{-1}_\eta(\vec{y})\right)(\nabla w_\eta)\circ\Phi^{-1}_\eta(\vec{y})\cdot (D\Phi_\eta(\vec{y}) - \operatorname{Id})\nabla v(\vec{y})dy}_{C_\eta}. \label{eq:Boundary1}
\end{align}
For $\eta$ small enough, the support of $w_\eta$ is contained in $\eta^{-1}U$, so the above is well defined. We thus have to take care of $A_\eta,B_\eta$ and $C_\eta$. To this end, we notice that by our definition of $\beta$, $\lvert D\Phi_\eta - \operatorname{Id}\rvert < C\eta^{2/3}$, since $\operatorname{supp}(w_\eta)\subset B_{R\eta^{-1/3}}(\vec{x_0})$. Hence,
\begin{equation}
    B_\eta ,C_\eta\leq C\lVert A\rVert_\infty\lVert \nabla v\rVert \lVert \nabla w_\eta\circ\Phi_\eta^{-1}\rVert\eta^{2/3}.
\end{equation}
Note further that by testing $T_\eta w_\eta$ against $w_\eta$, one finds that there exist some $C>0$, depending only on $\gamma$ from Assumption \ref{ass:A} such that for $\eta$  small enough,
\begin{equation}
    \lVert \nabla w_\eta\circ\Phi_\eta^{-1}\rVert^2\leq C(\lambda + 1)\lVert w_\eta\rVert =C(\lambda + 1),
\end{equation}
thus yielding
\begin{equation}
    B_\eta ,C_\eta\leq C(1+\lambda)\lVert A\rVert_\infty\lVert \nabla v\rVert \eta^{2/3}.
\end{equation}
To take care of $A_\eta$, we make use of Hölder continuity and expand $\Phi_\eta^{-1}$ to obtain
\begin{equation}
    \begin{aligned}
        \lvert A\left(\vec{x}_0, \vec{y}_0  +\textbf{P}\vec{y}\right) - &A\left(\vec{x}_0, \textbf{P}\Phi_\eta^{-1}(\vec{y})\right)\lvert\leq [A]_{\mathcal{C}^{0,\alpha}}\lvert\vec{y}_0 + \textbf{P}\vec{y} - \textbf{P}\Phi_\eta^{-1}(\vec{y})\rvert^\alpha\\
    \end{aligned}
\end{equation}
where the norm $\lvert \cdot \rvert$ is to be understood as a norm on $Y^m$.

Since $\eta^{-1}\textbf{P}\vec{x}_0\rightarrow\vec{y}_0$ and using the fact that $\operatorname{supp}(w_\eta)\subset B_{R\eta^{-1/3}}(\eta^{-1}\vec{x}_0)$, we see that $A_\eta$ tends to zero, since
\begin{equation}\label{eq:Boundary2}
    \Phi_\eta^{-1}(y) = y + \eta^{-1}\vec{x}_0 + \mathcal{O}(\eta\lvert \textbf{P}_{\vec{x}_0}\vec{y}\rvert^2).
\end{equation}
Therefore, one can bound $A_\eta$ as follows
\begin{equation}
    A_\eta \leq C(1+\lambda)[A]_{\mathcal{C}^{0,\alpha}} (\lvert \vec{y}_0-\eta^{-1}\textbf{P}\vec{x}_0\rvert + C\eta^{1/3})^\alpha\lVert \nabla v\rVert 
\end{equation}
We thus obtain
\begin{equation}
    \langle T_{\vec{x}_0,\vec{y}_0} w_\eta\circ\Phi_\eta^{-1}, v\rangle_{H^{-1},H^1}=\lambda\langle w_\eta\circ\Phi_\eta^{-1}, v\rangle_{H^{-1},H^1} +\langle \tilde{r}_\eta, v\rangle_{H^{-1},H^1},
\end{equation}
where $\lim_{\eta\rightarrow0}\lVert\tilde{r}_\eta\rVert_{H^{-1}}=0 $. The constructed pseudo-eigenfunction thus satisfies the assumption of Lemma \ref{lem: spectral_equivalence}, yielding
\begin{equation}
    \sigma_{\text{Boundary}} \subset \bigcup_{(\vec{x}_0,\vec{y}_0)\in B}\sigma(T_{\vec{x}_0,\vec{y}_0}).
\end{equation}

Unfortunately, as in \cite{allaire1998bloch}, we are unable to show a stricter statement for the boundary spectrum. However, we do want to point out that the following result from \cite{allaire1998bloch}, still holds:
\begin{theorem}\label{thm: SpectralLimit2}
    The limit spectrum in the Hausdorff sense satisfies
    \begin{equation}        \lim\eta^{2}\sigma_\eta= \sigma_{\text{Bloch}}\cup\left(\bigcup_{(\vec{x}_0,\vec{y}_0)\in B}\sigma(T_{\vec{x}_0,\vec{y}_0})\right).
    \end{equation}
\end{theorem}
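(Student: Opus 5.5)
The equality in Theorem~\ref{thm: SpectralLimit2} will follow from two inclusions. The inclusion ``$\subset$'' is immediate from what has already been established. Theorem~\ref{thm: SpectralLimit} gives $\lim\eta^2\sigma_\eta=\sigma_{\text{Bloch}}\cup\sigma_{\text{Boundary}}$. Theorem~\ref{thm: BoundarySpectrum} then places $\sigma_{\text{Boundary}}$ inside $\bigcup_{(\vec{x}_0,\vec{y}_0)\in B}\sigma(T_{\vec{x}_0,\vec{y}_0})$. The inclusion $\sigma_{\text{Bloch}}\subset\lim\eta^2\sigma_\eta$ is Proposition~\ref{prop: BlochSpectrum}. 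So the real content is the reverse inclusion: every $\lambda\in\sigma(T_{\vec{x}_0,\vec{y}_0})$ with $(\vec{x}_0,\vec{y}_0)\in B$ is a limit of points of $\eta^2\sigma_\eta$. I would prove it by transplanting pseudo-eigenfunctions from the half-space into $\eta^{-1}\Omega$, reversing the straightening argument used after Lemma~\ref{lem:BoundaryPseudo}.

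Fix $(\vec{x}_0,\vec{y}_0)\in B$, its subsequence $\eta_k$ with $\eta_k^{-1}\textbf{P}\vec{x}_0\to\vec{y}_0$ in $Y^m$, and $\lambda\in\sigma(T_{\vec{x}_0,\vec{y}_0})$. Since $T_{\vec{x}_0,\vec{y}_0}$ is self-adjoint, for each $\epsilon>0$ there is $u_\epsilon\in\mathcal{C}^\infty_c(T^+_{\vec{x}_0}\Omega)$ with $\lVert (T_{\vec{x}_0,\vec{y}_0}-\lambda)u_\epsilon\rVert_{H^{-1}}<\epsilon\lVert u_\epsilon\rVert$, supported in some ball $B_R(\vec 0)$; compact support follows from density of $\mathcal{C}^\infty_c$ in $H^1_0$ and the form domain. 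Using the local boundary chart $\Phi$ and its rescaling $\Phi_\eta$, I set $v_\eta:=u_\epsilon\circ\Phi_\eta$. For small $\eta$ this lies in $H^1_0(\eta^{-1}\Omega)$ and is supported in a set of diameter $O(R)$ around $\eta^{-1}\vec{x}_0$.

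Next I compute $\langle (T_\eta-\lambda)v_\eta,v\rangle$ by the same change of variables as in \eqref{eq:Boundary1}, read in the opposite direction, and split the coefficient error into three pieces. The first piece, freezing the macroscopic variable, is bounded by $[A]_{\mathcal{C}^{0,\alpha}}(\eta R)^\alpha$, exactly as in Proposition~\ref{prop: BlochSpectrum}. The second piece is the discrepancy between $A(\vec{x}_0,\textbf{P}\Phi_\eta^{-1}(\vec y))$ and $A(\vec{x}_0,\vec{y}_0+\textbf{P}\vec{y})$. By \eqref{eq:Boundary2} and Hölder continuity in the periodic variable, it is bounded by $[A]_{\mathcal{C}^{0,\alpha}}(\lvert\eta_k^{-1}\textbf{P}\vec{x}_0-\vec{y}_0\rvert_{Y^m}+C\eta R^2)^\alpha$. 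This is where the condition $(\vec{x}_0,\vec{y}_0)\in B$ is used: along $\eta_k$ the microstructure seen at the boundary point matches the shift $\vec{y}_0$. The third piece collects the Jacobian terms $D\Phi_\eta-\operatorname{Id}=O(\eta R)$ on the support, which are controlled as $B_\eta,C_\eta$ were. Bounding $\lVert u_\epsilon\rVert_{H^1}$ via ellipticity as in Proposition~\ref{prop: BlochSpectrum} and using $\det D\Phi=1$ to preserve $L^2$ norms, I get $\lVert (T_{\eta_k}-\lambda)v_{\eta_k}\rVert_{H^{-1}}\le(\epsilon+o(1))\lVert v_{\eta_k}\rVert$. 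Lemma~\ref{lem: spectral_equivalence} then gives $d(\lambda,\eta_k^2\sigma_{\eta_k})\le C(1+\lambda)(\epsilon+o(1))$. A diagonal choice $\epsilon\to0$ yields $\lambda\in\lim\eta^2\sigma_\eta$, understood as a Hausdorff limit point along a subsequence.

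The main obstacle is this last interpretation. The construction only produces approximation along the subsequence $\eta_k$ attached to $(\vec{x}_0,\vec{y}_0)$, whereas for the full family $\eta\to0$ the boundary phase $\eta^{-1}\textbf{P}\vec{x}_0$ keeps moving. I would first fix the meaning of ``$\lim$'' as the set of cluster points of $\eta^2\sigma_\eta$, i.e.\ the upper Kuratowski limit, consistent with how $\sigma_{\text{Boundary}}$ is defined through sequences. With that reading the reverse inclusion holds along subsequences, and the statement follows. If instead a genuine limit along all $\eta$ is intended, I would try to exploit density of the orbit $\{\eta^{-1}\textbf{P}\vec{x}\}$ as $\vec{x}$ varies over $\partial\Omega$ near $\vec{x}_0$, using criterion \eqref{criterion}. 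The idea is to move the boundary point slightly, so that each small $\eta$ admits a nearby $\vec{x}_\eta\in\partial\Omega$ with phase close to $\vec{y}_0$. This would require a continuity of $\sigma(T_{\vec{x},\vec{y}})$ in $(\vec{x},\vec{y})$ that I expect to be the delicate point.
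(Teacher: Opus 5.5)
Your proposal is correct. Its core is the paper's: the inclusion ``$\subset$'' is read off from Theorems~\ref{thm: SpectralLimit} and~\ref{thm: BoundarySpectrum}, and the reverse inclusion comes from running the straightening estimates of \eqref{eq:Boundary1} backwards, with the phase fixed by the subsequence defining $B$.

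The difference is in how the half-space pseudo-eigenfunctions are produced. The paper splits into cases. If $\lambda\in\sigma(T_{\vec{x}_0})$, it falls back on the Bloch part. Otherwise it argues that boundary-localised pseudo-eigenfunctions of $T_{\vec{x}_0,\vec{y}_0}$ must exist, and shifts and cuts them off as in Proposition~\ref{prop: BlochSpectrum} before transplanting. A lateral shift along the boundary plane changes the phase $\vec{y}_0$ unless it is an almost period, so that step needs care.

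You avoid all of this. You take a Weyl sequence for $T_{\vec{x}_0,\vec{y}_0}$ and make it compactly supported by density of $\mathcal{C}^\infty_c$ in the form domain. You then transplant it directly to $\eta_k^{-1}\vec{x}_0$ via $\Phi_{\eta_k}$. Since $R=R(\epsilon)$ stays fixed while $\eta_k\to 0$, every error term vanishes whether or not $\lambda\in\sigma(T_{\vec{x}_0})$. So your route is shorter and removes the case distinction, which is not needed for the statement as written.

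Your remark on the meaning of $\lim$ is well founded. The reverse inclusion is only obtained along the subsequence $\eta_k$, so the theorem must be read with $\lim$ as the set of cluster points of $\eta^2\sigma_\eta$. This matches how $B$ and $\sigma_{\text{Boundary}}$ are defined through subsequences; the paper leaves it implicit. Under that reading your orbit-density fallback is unnecessary, and it could not even be set up for $n=1$, where $\partial\Omega$ is finite.
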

\begin{proof}
    One inclusion directly follows from Theorem \ref{thm: BoundarySpectrum} and \ref{thm: SpectralLimit}. 
    
    The proof of the converse inclusion is similar to that of Proposition \ref{prop: BlochSpectrum}, we therefore only sketch it: Let $(\vec{x}_0,\vec{y}_0)\in B$ and $\lambda \in \sigma(T_{\vec{x}_0,\vec{y}_0})$. If also $\lambda \in \sigma(T_{\vec{x}_0})$, then immediately $\lambda \in \lim_{\eta\rightarrow0}\eta^2\sigma_\eta$. Otherwise, there must exist a sequence of boundary localised pseudo-eigenfunctions (in the sense of $\sigma_{\text{Boundary}}$), for else we could again construct pseudo-eigenfunctions for $T_{\vec{x_0}}$ as in the Lemmas \ref{lem: PseudoeigenvectorsGeneral} and \ref{lem: FrozenCoefficients} . After shifting and multiplying with a smooth bump function as in Proposition \ref{prop: BlochSpectrum}, one can proceed with running the estimates of Lemma \ref{lem:BoundaryPseudo} and Equation \eqref{eq:Boundary1} in reverse to prove that this sequence also makes for good pseudo-eigenfunctions for the operator $T_\eta$.
\end{proof}

\begin{remark}
    The proofs of Theorem \ref{thm: BoundarySpectrum} and \ref{thm: SpectralLimit2} extend readily to polytopes. Since each face is flat, the cubes $Q_j^\eta$ can be taken as standard Euclidean cubes aligned with the faces, and the boundary-straightening error is identically zero. If the limit point $\vec{x}_0$ lies in the interior of a face, the construction is identical to the above. If $\vec{x}_0$ lies on an edge or at a corner, the cutoff $\chi_{j(\eta)}^\eta$ is taken to equal $1$ on all cubes touching the edge or corner, with the transition to $0$ occurring over the adjacent layer of cubes; the gradient bound $\lvert\nabla\chi_{j(\eta)}^\eta\rvert \leq C\eta\beta_\eta^{-1}$ remains valid. The limiting operator is then the corresponding wedge or cone operator $T_{\vec{x}_0, C(\vec{x}_0)}$, where $C(\vec{x}_0)$ denotes the tangent cone at $\vec{x}_0$.
\end{remark}

\section{Conclusion}\label{sec:conclusion}
In this paper, we have studied the spectral asymptotics of elliptic operators with quasiperiodic coefficients generated by the cut-and-project method. Using two-scale convergence adapted to cut-and-project structures, we have established convergence of the spectrum in the low-frequency homogenisation regime (Section~2). In the high-frequency regime (Section~3), we have introduced a rescaling approach that transforms the problem to an expanding domain, and we have shown that the rescaled spectrum converges to $\sigma_{\text{Bloch}}\cup\sigma_{\text{Boundary}}$ in the critical scaling, and to $\mathbb{R}_+$ in both the sub-critical and super-critical scalings. In Section~4, we have shown that, for domains with $\mathcal{C}^2$ boundary, the boundary layer spectrum is contained in $\bigcup_{(\vec{x}_0,\vec{y}_0)\in B}\sigma(T_{\vec{x}_0,\vec{y}_0})$, the union of spectra of half-space operators with frozen macroscopic variable over all attained boundary points $\vec{x}_0,\vec{y}_0$.

\section*{Acknowledgements}
C. Thalhammer was supported in part by the Swiss National Science Foundation grant number 200021-236472. C. Thalhammer thanks Habib Ammari for providing support and the freedom to pursue this topic independently.

\section*{Code Availability}
The code used to generate Figure \ref{fig:eigenvalue_critical} is openly available at \url{https://github.com/cthalhammer/HighFrequencyAsymptotics}.

\printbibliography

\end{document}